\providecommand{\tabularnewline}{\\}
\providecommand{\algorithmname}{Algorithm}
\numberwithin{equation}{section}
\numberwithin{figure}{section}
\theoremstyle{plain}
\newtheorem{thm}{\protect\theoremname}[section]
  \theoremstyle{remark}
  \newtheorem{rem}[thm]{\protect\remarkname}
  \theoremstyle{plain}
  \newtheorem{lem}[thm]{\protect\lemmaname}
  \theoremstyle{plain}
  \newtheorem{cor}[thm]{\protect\corollaryname}
  \providecommand{\corollaryname}{Corollary}
  \providecommand{\lemmaname}{Lemma}
  \providecommand{\remarkname}{Remark}
\providecommand{\theoremname}{Theorem}
\begin{document}
\global\long\def\acs{A_{c}}

\global\long\def\bcs{B_{c}}

\global\long\def\as{A_{s}}

\global\long\def\bs{B_{s}}

\global\long\def\ucs{U_{c}}

\global\long\def\ucshat{\hat{U}_{c}}

\title[On rank reduction of separated representations]{Randomized Interpolative Decomposition of Separated Representations}

\author{David J. Biagioni$\,^{*}$, Daniel Beylkin$\,^{**}$ and Gregory
Beylkin$\,^{***}$}

\curraddr{$^{*}$Department of Aerospace Engineering Sciences \\
 University of Colorado at Boulder \\
 429 UCB \\
 Boulder, CO 80309-0526 \\
$\,^{**}$Program in Applied Mathematics\\
Yale University\\
51 Prospect St.\\
New Haven, CT 06511\\
$^{***}$Department of Applied Mathematics \\
 University of Colorado at Boulder \\
 526 UCB \\
 Boulder, CO 80309-0526 \\
~}

\thanks{D. J. Biagioni was partially supported by NREL grant UGA-0-41026-08
and NSF grant DMS-1228359. D. Beylkin received support under FA9550-11-C-0028
by the DoD, AFOSR NDSEG Fellowship. G. Beylkin was partially supported
by NSF grants DMS-1009951, DMS 1228359 and DOE/ORNL grant 4000038129. }
\begin{abstract}
We introduce an algorithm to compute tensor Interpolative Decomposition
(tensor ID) for the reduction of the separation rank of Canonical
Tensor Decompositions (CTDs). Tensor ID selects, for a user-defined
accuracy $\epsilon$, a near optimal subset of terms of a CTD to represent
the remaining terms via a linear combination of the selected terms.
Tensor ID can be used as an alternative to or in combination with
the Alternating Least Squares (ALS) algorithm. We present examples
of its use within a convergent iteration to compute inverse operators
in high dimensions. We also briefly discuss the spectral norm as a
computational alternative to the Frobenius norm in estimating approximation
errors of tensor ID. 

We reduce the problem of finding tensor IDs to that of constructing
Interpolative Decompositions of certain matrices. These matrices are
generated via randomized projection of the terms of the given tensor.
We provide cost estimates and several examples of the new approach
to the reduction of separation rank.
\end{abstract}

\keywords{Canonical Tensor Decomposition, tensor Interpolative Decomposition,
Alternating Least Squares algorithm, randomized projection, Self-Guiding
Tensor Iteration }

\maketitle

\section{Introduction}

The computational cost of many fast algorithms grows exponentially
in the problem dimension, $d$. In order to maintain linear complexity
in $d$, we use separated representations as a framework for numerical
computations in high dimensions \cite{BEY-MOH:2002,BEY-MOH:2005}.
Separated representations generalize the usual notion of separation
of variables by representing a multivariate function as 
\begin{equation}
u\left(x_{1},x_{2},\dots x_{d}\right)=\sum_{l=1}^{r}\sigma_{l}u_{1}^{(l)}\left(x_{1}\right)u_{2}^{(l)}\left(x_{2}\right)\cdots u_{d}^{(l)}\left(x_{d}\right),\label{eq: separted representation}
\end{equation}
where the number of terms, $r$, is called the separation rank of
the function $u$. Any discretization of (\ref{eq: separted representation})
with $u_{i_{j}}^{(l)}=u_{j}^{(l)}\left(x_{i_{j}}\right)$, $i_{j}=1,\dots M_{j}$
and $j=1,\dots d$, leads to a Canonical Tensor Decomposition (CTD),
\begin{equation}
\mathcal{U}_{i_{1}\dots i_{d}}=\sum_{l=1}^{r}\sigma_{l}u_{i_{1}}^{(l)}u_{i_{2}}^{(l)}\cdots u_{i_{d}}^{(l)}.\label{eq:introCTDelements}
\end{equation}
The functions $u_{j}^{(l)}\left(x_{j}\right)$ in (\ref{eq: separted representation})
and the corresponding vectors $u_{i_{j}}^{(l)}$ in (\ref{eq:introCTDelements})
are normalized to have unit Frobenius norm so that the size of the
terms is carried by their positive $s$-values, $\sigma_{l}$. 

Numerical computations using such representations requires an algorithm
to reduce the number of terms for a given accuracy, $\epsilon$. Such
reduction can be achieved via Alternating Least Squares (ALS) algorithm
(see e.g., \cite{HARSHM:1970,CAR-CHA:1970,BRO:1997,BEY-MOH:2002,BEY-MOH:2005,TOM-BRO:2006,KOL-BAD:2009}).
Specifically, given a CTD of rank $r$, ALS allows us to find a representation
of the same form but with fewer terms, 
\begin{equation}
\tilde{\mathcal{U}}_{i_{1}\dots i_{d}}=\sum_{l=1}^{k}\tilde{\sigma}_{l}\tilde{u}_{i_{1}}^{(l)}\tilde{u}_{i_{2}}^{(l)}\cdots\tilde{u}_{i_{d}}^{(l)},\,\,\,\,\,\,\,\,\,\,\,\, k<r,\label{eq: reduced separated representation via ALS}
\end{equation}
so that $\left\Vert \mathcal{U}-\tilde{\mathcal{U}}\right\Vert \le\epsilon\left\Vert \mathcal{U}\right\Vert $,
where $\epsilon$ is a user-selected accuracy. Standard operations
on separated representations of rank $k$, such as multiplication,
may result in a large number, e.g., $\mathcal{O}\left(k^{2}\right)$,
of intermediate terms. If the intermediate separation rank is $r=\mathcal{O}\left(k^{2}\right)$
reducible to $\mathcal{O}\left(k\right)$, then the cost of ALS can
estimated as $\mathcal{O}\left(d\cdot k^{4}\cdot M\right)\cdot(number\,\, of\,\, iterations)$,
where we assumed that $M_{j}=M$, $j=1,\dots d$. Noting that the
number of iterations is not easily controlled and may be large, the
computational cost of ALS can be significant, although it is linear
in the dimension $d$.

In this paper we describe randomized tensor Interpolative Decomposition
(ID) for the reduction of the separation rank of a tensor in the canonical
form (\ref{eq:introCTDelements}) that is faster (for a class of problems)
than ALS by $\mathcal{O}\left(k\right)\cdot(number\,\, of\,\, iterations)$.
We adopt the term tensor ID by analogy with matrix ID (see e.g., \cite{HA-MA-TR:2011})
and note that the reduction of the number of linearly dependent terms
in the separated representations has been already performed using
Gram matrices.%
\footnote{Martin Mohlenkamp (Ohio University) and G.B. developed and used a
deterministic algorithm for tensor ID based on pivoted Cholesky decomposition
of the Gram matrix (see e.g., discussion around Theorem~\ref{thm:gram-id-error}).%
} The reason we revisit this type of approach is that the use of the
Gram matrix limits the accuracy of computations to about one half
of the available digits (e.g., single precision while using double
precision arithmetic) and may be problematic due to the dynamic range
of its entries if used for sufficiently high dimensions. The randomized
approach allows us to avoid these issues, at least for moderate dimensions.
While we tested the randomized algorithm on a number of representative
examples, the full justification of our approach needs additional
work. Currently we verify accuracy of the result \emph{a posteriori}.
The randomized tensor ID algorithm can be supplemented by a fixed
number of ALS-type iterations which play an auxiliary role of improving
the conditioning of CTD. We demonstrate a significant acceleration
of the reduction of separation rank on a number of examples.

On the technical level, we extend recently developed ideas of randomized
algorithms for matrix ID \cite{C-G-M-R:2005,L-W-M-R-T:2007,MA-RO-TY:2011,HA-MA-TR:2011}
to the tensor setting. These algorithms use the fact that the projection
of columns of a low rank matrix onto a sufficient number of random
vectors provides, with high probability, a good approximation of the
matrix range. The number of random vectors needed is only slightly
greater than the rank of the matrix which, in turn, implies that the
cost of computing the matrix ID can be greatly reduced. We extend
this randomized framework to tensors by posing the problem as that
of computing the matrix ID of a certain associated matrix. We first
present the main ideas without  details or proofs and lay out technical
information in the sections that follow. 

Our interest in the development of tensor ID stems from applications
where we compute functions of operators in high dimensions. In particular,
we are interested in computing Green's functions (see examples in
Section~\ref{sec:Examples}). The inverse operator (the Green's function)
can be computed via self-correcting, quadratically convergent Schulz
iteration \cite{SCHULZ:1933}. Many other important functions of operators
can also be obtained via self-correcting, convergent iterations. In
such cases we can use the iteration itself to generate the necessary
variety of terms in the separated representation and use the randomized
tensor ID as a way to select the desired subset of terms while maintaining
accuracy. The intermediate errors incurred by such reduction are then
corrected by the iteration itself. We dub such an approach Self-Guiding
Tensor Iteration (SGTI) and provide examples of computing Green's
functions using it. We plan to address a general problem of computing
functions of operators in high dimensions via the SGTI approach separately. 

Finally, we have observed that, in a tensor setting, a randomized
approach based on sampling may provide an additional speed-up in reducing
the separation rank. We plan to address this idea separately, as well.

\subsection{Definitions and notation.}

Throughout the paper, CTDs are denoted by the calligraphic letters
$\mathcal{Q}$ through $\mathcal{Z}$. We assume that each direction
$j=1,\dots,d$ may be represented by $M_{j}$ values, stored in the
vectors $\mathbf{u}_{j}$. Thus, $\mathcal{U}\in\mathbb{V}=\bigotimes_{j=1}^{d}\mathbb{R}^{M_{j}}$
and, using the Kronecker product notation, can also be written as
\begin{equation}
\mathcal{U}=\sum_{l=1}^{r}\sigma_{l}\bigotimes_{j=1}^{d}\mathbf{u}_{j}^{(l)}\label{eq:CTD}
\end{equation}
instead of (\ref{eq:introCTDelements}). It may sometimes be convenient
to emphasize that $\mathcal{U}$ is a sum of rank-one tensors, so
that

\begin{equation}
\mathcal{U}=\sum_{l=1}^{r}\sigma_{l}\mathcal{U}^{(l)},\,\,\,\,\,\mbox{where}\,\,\,\,\,\,\,\,\,\,\mathcal{U}^{(l)}=\bigotimes_{j=1}^{d}\mathbf{u}_{j}^{(l)}.\label{eq:intro-sumofrankoneterms}
\end{equation}
The standard Frobenius inner product between any two tensors $\mathcal{U}$
and $\mathcal{V}$ is defined as
\begin{equation}
\langle\mathcal{U},\mathcal{V}\rangle=\sum_{i_{1}=1}^{M_{1}}\cdots\sum_{i_{d}=1}^{M_{d}}\mathcal{U}_{i_{1}\dots i_{d}}\mathcal{V}_{i_{1}\dots i_{d}}\label{eq:intro-frobenius-inner-product-definition}
\end{equation}
which for CTDs reduces to
\begin{equation}
\langle\mathcal{U},\mathcal{V}\rangle=\sum_{l=1}^{r_{u}}\sum_{m=1}^{r_{v}}\sigma_{l}^{u}\sigma_{m}^{v}\langle\mathcal{U}^{(l)},\mathcal{V}^{(m)}\rangle\,\,=\,\,\sum_{l=1}^{r_{u}}\sum_{m=1}^{r_{v}}\sigma_{l}^{u}\sigma_{m}^{v}\prod_{j=1}^{d}\langle\mathbf{u}_{j}^{(l)},\mathbf{v}_{j}^{(m)}\rangle,\label{eq:intro-frobenius-inner-product-for-CTD}
\end{equation}
where $\langle\cdot,\cdot\rangle$ denotes the inner product between
component vectors. The Frobenius norm is then defined as
\begin{equation}
\left\Vert \mathcal{U}\right\Vert _{F}=\sqrt{\langle\mathcal{U},\mathcal{U}\rangle}.\label{eq:intro-frob-norm}
\end{equation}

\begin{rem}
The directional vectors $\mathbf{u}_{j}^{(l)}$ may represent objects
of different types, including proper one dimensional vectors, matrices
or even low dimensional tensors. The vectors $\mathbf{u}_{j}^{(l)}$
can have a complicated structure (e.g., sparse matrices or low dimensional
tensors) as long as the Frobenius inner product between them is well
defined.
\end{rem}

\subsection{Tensor interpolative decomposition\label{sub:intro-Tensor-interpolative-decomposition}}

The tensor ID is motivated by the fact that certain tensor operations,
e.g. multiplication, can result in CTDs with many nearly linearly
dependent terms. In such case, we formulate the problem of separated
rank reduction as that of identifying a near optimal (in a sense to
be explained later) subset $\left\{ \mathcal{U}^{(l_{m})}\right\} _{m=1}^{k}$,
$k<r$, of linearly independent terms of $\mathcal{U}$ in (\ref{eq:CTD})
to represent the remaining terms. In contrast to (\ref{eq: reduced separated representation via ALS}),
we seek a CTD 
\begin{equation}
\mathcal{U}_{k}=\sum_{m=1}^{k}\widehat{\sigma}_{m}\mathcal{U}^{(l_{m})},\,\,\,\,\,\,\,\,\,\, k<r,\label{eq: subset separted representation}
\end{equation}
with modified $s$-values $\widehat{\sigma}_{m}$, so that $\left\Vert \mathcal{U}-\mathcal{U}_{k}\right\Vert \le\epsilon\left\Vert \mathcal{U}\right\Vert $. 

The tensor ID extends the concept of the matrix ID introduced and
developed in \cite{GU-EIS:1996,TYRTYS:1996,GO-ZA-TY:1997,GO-TY-ZA:1997,C-G-M-R:2005,L-W-M-R-T:2007,MA-RO-TY:2011,HA-MA-TR:2011}.
For an $m\times n$ matrix $A$ and desired accuracy $\epsilon$,
the construction of the matrix ID entails identifying a set of columns
with indices $\mathcal{L}_{k}=\{l_{1},l_{2},\dots,l_{k}\}\subseteq\{1,2,\dots,n\}$
and $k\times n$ (well conditioned) coefficient matrix $P$, such
that 
\begin{equation}
\left\Vert A-\acs P\right\Vert _{2}<\epsilon\left\Vert A\right\Vert _{2},\label{eq:intro A-AcP norm}
\end{equation}
where $\acs$ is the so-called column skeleton of $A$, i.e., a matrix
containing the columns with indices $\mathcal{L}_{k}$.

We associate the problem of selecting linearly independent terms of
the tensor $\mathcal{U}$ to that of selecting linearly independent
columns of an appropriately chosen matrix. For conceptual purposes,
let us introduce the $N\times r$ matrix 
\begin{equation}
U=\left[\begin{array}{cccc}
| & | &  & |\\
\sigma_{1}\mathcal{U}^{(1)} & \sigma_{2}\mathcal{U}^{(2)} & \cdots & \sigma_{r}\mathcal{U}^{(r)}\\
| & | &  & |
\end{array}\right],\label{eq:umat}
\end{equation}
by treating the terms of $\mathcal{U}$ as vectors in $\mathbb{R}^{N}$,
where $N=\prod_{j=1}^{d}M_{j}$ is gigantic. Forming such a matrix
is in \emph{no way practical} for most (if not all) problems of interest.
However, by efficiently constructing the matrix ID of $U$ (without
using this matrix explicitly), we show in Section~\ref{sec:A-new-approach-to-the-reduction-of-tensor-rank}
that it is possible to construct a rank-$k$ approximation to the
tensor $\mathcal{U}$ as in (\ref{eq: subset separted representation}). 

In Section~\ref{sub:Tensor-ID-problem}, we start by showing how
to use the $r\times r$ Gram matrix (corresponding to $U^{*}U$) 
\begin{equation}
G=\left[\begin{array}{cccc}
\langle\sigma_{1}\mathcal{U}^{(1)},\sigma_{1}\mathcal{U}^{(1)}\rangle & \langle\sigma_{1}\mathcal{U}^{(1)},\sigma_{2}\mathcal{U}^{(2)}\rangle & \cdots & \langle\sigma_{1}\mathcal{U}^{(1)},\sigma_{r}\mathcal{U}^{(r)}\rangle\\
\langle\sigma_{2}\mathcal{U}^{(2)},\sigma_{1}\mathcal{U}^{(1)}\rangle & \langle\sigma_{2}\mathcal{U}^{(2)},\sigma_{2}\mathcal{U}^{(2)}\rangle & \cdots & \langle\sigma_{2}\mathcal{U}^{(2)},\sigma_{r}\mathcal{U}^{(r)}\rangle\\
\vdots & \vdots & \ddots & \vdots\\
\langle\sigma_{r}\mathcal{U}^{(r)},\sigma_{1}\mathcal{U}^{(1)}\rangle & \langle\sigma_{r}\mathcal{U}^{(r)},\sigma_{2}\mathcal{U}^{(2)}\rangle & \cdots & \langle\sigma_{r}\mathcal{U}^{(r)},\sigma_{r}\mathcal{U}^{(r)}\rangle
\end{array}\right]\label{eq:intro-gmat}
\end{equation}
to compute the tensor ID of $\mathcal{U}$. For this purpose, we use
a symmetric rank-$k$ ID of $G$,
\begin{equation}
G_{k}=P^{*}G_{S}P,\label{eq:P*GP}
\end{equation}
where $G_{S}$ is a $k\times k$ sub-matrix of $G$, and derive estimates
for the resulting error in approximating the tensor $\mathcal{U}$. 

Constructing the tensor ID via the Gram matrix limits the achievable
accuracy $\epsilon$ to about half the digits of machine precision.
To avoid the loss of accuracy, we develop a randomized algorithm for
computing the tensor ID. Specifically (see Section~\ref{sec:A-new-approach-to-the-reduction-of-tensor-rank}),
we form a collection of $\ell$ random tensors in separated form,
\begin{equation}
\mathcal{R}^{(l)}=\bigotimes_{j=1}^{d}\mathbf{r}_{j}^{(l)},\,\,\,\,\,\,\,\,\,\, l=1,\dots,\ell,\label{eq:intro random tensor}
\end{equation}
with independent random variables $r_{i_{j}}^{(l)}$ of zero mean
and unit variance and $\ell$ somewhat larger than the expected separation
rank $k$. We then form the $\ell\times r$ projection matrix
\begin{equation}
Y=\left[\begin{array}{cccc}
\langle\mathcal{R}^{(1)},\sigma_{1}\mathcal{U}^{(1)}\rangle & \langle\mathcal{R}^{(1)},\sigma_{2}\mathcal{U}^{(2)}\rangle & \cdots & \langle\mathcal{R}^{(1)},\sigma_{r}\mathcal{U}^{(r)}\rangle\\
\langle\mathcal{R}^{(2)},\sigma_{1}\mathcal{U}^{(1)}\rangle & \langle\mathcal{R}^{(2)},\sigma_{2}\mathcal{U}^{(2)}\rangle & \cdots & \langle\mathcal{R}^{(2)},\sigma_{r}\mathcal{U}^{(r)}\rangle\\
\vdots & \vdots & \ddots & \vdots\\
\langle\mathcal{R}^{(\ell)},\sigma_{1}\mathcal{U}^{(1)}\rangle & \langle\mathcal{R}^{(\ell)},\sigma_{2}\mathcal{U}^{(2)}\rangle & \cdots & \langle\mathcal{R}^{(\ell)},\sigma_{r}\mathcal{U}^{(r)}\rangle
\end{array}\right],\label{eq:intro-ymat-rproj}
\end{equation}
and use the matrix ID of $Y$ to construct a rank-$k$ approximation
$\mathcal{U}_{k}$ of the tensor $\mathcal{U}$. While the theoretical
underpinnings for this approach require further work, we have found
that the method works well in many practical examples, some of which
which are presented in Section~\ref{sec:Examples}.

\subsection{Contributions and relationship to prior work}

Separated representations of functions and operators were introduced
in \cite{BEY-MOH:2002,BEY-MOH:2005} (see also \cite{BE-GA-MO:2009}).
Since that time, their use as a framework for numerical operator calculus
has appeared in a number of contexts, for example, in the construction
of Green's functions, quadratures, nonlinear approximations, and solutions
of stochastic differential equations (see \cite{GR-KR-TO:2013} for
a recent survey).

The Canonical Tensor Decomposition (CTD) has a long history and is
also known as PARAFAC (PARAllel FACtor analysis) \cite{HARSHM:1970}
or CANDECOMP (CANonical DECOMPosition) \cite{CAR-CHA:1970}. The CTD
has been used extensively in various areas, including chemometrics,
psychometrics, multivariate regression, and signal processing. For
additional references, we refer to recent surveys \cite{KOL-BAD:2009,ACA-BUL:2009,GR-KR-TO:2013}. 

Current practice for reducing the separation rank of a CTD is to use
the Alternating Least Squares (ALS) algorithm or its variants (see,
e.g., \cite{HARSHM:1970,CAR-CHA:1970,BRO:1997,BEY-MOH:2002,BEY-MOH:2005,TOM-BRO:2006,KOL-BAD:2009,MOHLEN:2011}).
Although many problems may indeed be solved using this algorithm,
this approach limits both the size of problems and the attainable
accuracy. This paper provides an efficient alternative to ALS for
applications where a self-correcting convergent iterative algorithm
is used for solving numerical problems (cf., Section~\ref{sec:Examples}). 

We also introduce randomized methods into the CTD setting. The use
of randomized methods for matrix problems has gained wide popularity
within the last several years (see \cite{HA-MA-TR:2011} for a recent
survey). The theoretical foundation for many of these methods can
be traced to the Johnson-Lindenstrauss Lemma \cite{JOH-LIN:1984},
which guarantees so-called concentration of measure when vectors in
high dimensions are projected into a lower dimensional space. The
idea of using random projections in computational problems was proposed
and developed in different areas over several years; see, e.g., \cite{P-T-R-V:1998,ACHLIO:2003,AIL-CHA:2006,CAN-TAO:2006,CA-RO-TA:2006a}.
The application of such an approach to low rank matrix approximation
has been further developed in \cite{MA-RO-TY:2006,SARLOS:2006,L-W-M-R-T:2007,MA-RO-TY:2011}.
A variety of sampling methods for the same purpose have been proposed
in \cite{DRI-KAN:2003,FR-KA-VE:2004,ACH-MCS:2007,RUD-VER:2007}, and
several recent papers introduced randomized methods to the tensor
setting for the Tucker decomposition \cite{V-K-K-V:2005,MAHONE:2006,DRI-MAH:2007,TSOURA:2009}. 

The tensor ID described in Sections~\ref{sub:intro-Tensor-interpolative-decomposition}
and \ref{sec:A-new-approach-to-the-reduction-of-tensor-rank} extends
the concept of the matrix ID developed in \cite{TYRTYS:1996,GO-ZA-TY:1997,GO-TY-ZA:1997}
and further extended in \cite{GU-EIS:1996,C-G-M-R:2005,MA-RO-TY:2006,L-W-M-R-T:2007,MA-RO-TY:2011}.
Our method for constructing tensor ID uses a randomized projection
method with some similarities to approaches in \cite{MA-RO-TY:2006,SARLOS:2006,L-W-M-R-T:2007,MA-RO-TY:2011}. 

Many of the standard arguments to justify the use of random projections
of low rank matrices do not translate easily to the tensor setting
as they lead to overly pessimistic estimates. For example, the usual
arguments on concentration of measure for columns of the matrix $Y$
in (\ref{eq:intro-ymat-rproj}) lead to a lower bound for the number
of random projections that is exponential in the dimension $d$. Similar
pessimistic estimates appears in the recent works \cite{NG-DR-TR:2010,RA-SC-ST:2013}.
Nevertheless, as we describe in Sections~\ref{sec:A-new-approach-to-the-reduction-of-tensor-rank}
and \ref{sec:Examples}, the method we propose works significantly
better than these estimates suggest. Further work on this topic is
needed. 

We begin by reviewing the necessary mathematical preliminaries in
Section~\ref{sec:Preliminaries}. We then describe our approach to
the reduction of separation rank in Section~\ref{sec:A-new-approach-to-the-reduction-of-tensor-rank}.
We set the conceptual framework for the tensor ID in Sections~\ref{sub:id-of-symmetric-matrices}~--~\ref{sub:Tensor-ID-problem}
and describe an efficient randomized algorithm for its computation
in Section~\ref{sec:A-new-approach-to-the-reduction-of-tensor-rank}.
We then address the loss of accuracy and the high cost of evaluating
the Frobenius norm in Section~\ref{sec:Frobenius-norm-and-s-norm},
and discuss rank-one tensor approximation as an alternative method
for norm estimation. Finally, we present several examples illustrating
the new algorithm in Section~\ref{sec:Examples}.

\section{Preliminaries\label{sec:Preliminaries}}

We start by summarizing several randomized algorithms for matrices
that will be used in Section~\ref{sec:A-new-approach-to-the-reduction-of-tensor-rank}.

\subsection{Randomized algorithm for approximating the range of a matrix\label{sub:proto-algorithm}}

Let us define Q-factorization as a decomposition of an $m\times n$
matrix $A$ of fixed rank $k$ via 
\begin{equation}
A_{k}=QS\label{eq:qs-factorization}
\end{equation}
where $Q$ is an $m\times k$ matrix with orthonormal columns and
$S$ is a $k\times n$ coefficient matrix (not necessarily upper-triangular).
An efficient approach is to first construct the matrix $Q$ to approximate
the range of $A$, and then set
\begin{equation}
S=Q^{*}A.\label{eq: s matrix in qs factorization}
\end{equation}
The matrix $A_{k}$ is then seen to be an orthogonal projection of
the columns of $A$ onto the subspace captured by $Q$. 

When $A$ is low rank, the construction of (\ref{eq:qs-factorization})
is amenable to randomized methods. We use Algorithm~4.1 of \cite{HA-MA-TR:2011}
to construct $Q$, summarized in this paper as Algorithm~\ref{alg:Randomized-algorithm-for-approximating-the-range}
(see also \cite{SARLOS:2006,L-W-M-R-T:2007,MA-RO-TY:2011}). 

\begin{algorithm}[H]
\begin{raggedright}
\caption{Randomized Algorithm for Approximating the Range of a Matrix \cite{HA-MA-TR:2011}\label{alg:Randomized-algorithm-for-approximating-the-range}}

\par\end{raggedright}

\begin{raggedright}
Input: An $m\times n$ matrix $A$ of fixed rank $k$, and integer
$\ell>k$.
\par\end{raggedright}

\begin{raggedright}
Output: An $m\times\ell$ matrix $Q$ whose columns comprise an orthonormal
basis for the range of $A$.
\par\end{raggedright}
\begin{enumerate}
\item \begin{raggedright}
Generate an $m\times\ell$ Gaussian random matrix, $R$.
\par\end{raggedright}
\item \begin{raggedright}
Form the $m\times\ell$ matrix, $Y=AR$.
\par\end{raggedright}
\item \raggedright{}Construct QR factorization of $Y$, yielding the matrix
$Q$.\end{enumerate}
\end{algorithm}

\noindent This procedure yields a matrix $Q$ satisfying the following
error estimate,
\begin{thm}
\cite{HA-MA-TR:2011} (Halko, Martinsson, and Tropp) Select a target
rank $k\geq2$ and an integer $p\geq2$, where $\ell=k+p\leq\min\{m,n\}$
and generate an $m\times\ell$ matrix $Q$ with orthonormal columns.
Then we have
\begin{equation}
\mathbb{E}\left[\left\Vert A-QQ^{*}A\right\Vert _{2}\right]\leq\tau_{k+1}(A)\left(1+\frac{4\sqrt{\ell}}{p-1}\sqrt{\min\{m,n\}}\right),\label{eq:proto-alg-expected-error}
\end{equation}
where $\mathbb{E}$ denotes expectation with respect to the random
matrix $R$ and $\tau_{k+1}(A)$ is the $k+1$ singular value of $A$.
\end{thm}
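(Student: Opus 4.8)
The plan is to follow the two-stage analysis of \cite{HA-MA-TR:2011}: a deterministic structural bound that holds for an arbitrary test matrix, followed by an averaging step that exploits the Gaussian law of $R$. Throughout, write the SVD $A=U\Sigma V^{*}$, fix the target rank $k$, and partition $\Sigma=\mathrm{diag}(\Sigma_{1},\Sigma_{2})$ with $\Sigma_{1}$ holding the $k$ largest singular values, $V=[V_{1}\;V_{2}]$ conformally, $\Omega_{1}=V_{1}^{*}R$ (of size $k\times\ell$), and $\Omega_{2}=V_{2}^{*}R$. Since the columns of $Q$ span $\mathrm{range}(AR)$, with probability one $QQ^{*}=P_{AR}$, the orthogonal projector onto $\mathrm{range}(AR)$; and if $\sigma_{k}=0$, i.e.\ $\mathrm{rank}(A)<k$, then $\mathrm{range}(AR)=\mathrm{range}(A)$ a.s., so $QQ^{*}A=A$, and since $\tau_{k+1}(A)=0$ the bound is trivial. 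I therefore assume $\sigma_{k}>0$.

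\emph{Deterministic step.} Writing $AR=U\left[\begin{smallmatrix}\Sigma_{1}\Omega_{1}\\ \Sigma_{2}\Omega_{2}\end{smallmatrix}\right]$, whenever $\Omega_{1}$ has full row rank the matrix $Z:=(AR)\,\Omega_{1}^{\dagger}\Sigma_{1}^{-1}=U\left[\begin{smallmatrix}I_{k}\\ \Sigma_{2}\Omega_{2}\Omega_{1}^{\dagger}\Sigma_{1}^{-1}\end{smallmatrix}\right]$ lies in $\mathrm{range}(AR)$, so $P_{AR}$ approximates $A$ in spectral norm at least as well as the orthogonal projector onto $\mathrm{range}(Z)$. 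Evaluating the latter error from this block structure (the $\Sigma_{1}^{-1}$ cancels) yields the deterministic bound of \cite{HA-MA-TR:2011} (their Theorem~9.1),
\begin{equation*}
\left\Vert A-QQ^{*}A\right\Vert_{2}^{2}\;\le\;\left\Vert \Sigma_{2}\right\Vert_{2}^{2}+\left\Vert \Sigma_{2}\,\Omega_{2}\,\Omega_{1}^{\dagger}\right\Vert_{2}^{2},
\end{equation*}
whence $\left\Vert A-QQ^{*}A\right\Vert_{2}\le\left\Vert \Sigma_{2}\right\Vert_{2}+\left\Vert \Sigma_{2}\Omega_{2}\Omega_{1}^{\dagger}\right\Vert_{2}$ by $\sqrt{a+b}\le\sqrt a+\sqrt b$.

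\emph{Averaging step.} By rotational invariance of the Gaussian law, $\Omega_{1}$ and $\Omega_{2}$ are independent with i.i.d.\ standard normal entries, so $\Omega_{1}$ has full row rank almost surely. Conditioning on $\Omega_{1}$, one writes $\Sigma_{2}\Omega_{2}\Omega_{1}^{\dagger}=SGT$ with $S=\Sigma_{2}$ and $T=\Omega_{1}^{\dagger}$ fixed and $G=\Omega_{2}$ Gaussian, and applies the standard estimate $\mathbb{E}\left\Vert SGT\right\Vert_{2}\le\left\Vert S\right\Vert_{2}\left\Vert T\right\Vert_{F}+\left\Vert S\right\Vert_{F}\left\Vert T\right\Vert_{2}$; averaging over $\Omega_{1}$ then gives
\begin{equation*}
\mathbb{E}\left[\left\Vert A-QQ^{*}A\right\Vert_{2}\right]\;\le\;\left\Vert \Sigma_{2}\right\Vert_{2}\bigl(1+\mathbb{E}\left\Vert \Omega_{1}^{\dagger}\right\Vert_{F}\bigr)+\left\Vert \Sigma_{2}\right\Vert_{F}\,\mathbb{E}\left\Vert \Omega_{1}^{\dagger}\right\Vert_{2}.
\end{equation*}
Since $\Omega_{1}\Omega_{1}^{*}$ is a $k\times k$ Wishart matrix on $\ell$ degrees of freedom, $\mathbb{E}\left\Vert \Omega_{1}^{\dagger}\right\Vert_{F}^{2}=\mathrm{tr}\,\mathbb{E}(\Omega_{1}\Omega_{1}^{*})^{-1}=k/(\ell-k-1)=k/(p-1)$, finite precisely because $p\ge2$; hence $\mathbb{E}\left\Vert \Omega_{1}^{\dagger}\right\Vert_{F}\le\sqrt{k/(p-1)}$ by Jensen, and the small-singular-value bound for a wide Gaussian matrix gives $\mathbb{E}\left\Vert \Omega_{1}^{\dagger}\right\Vert_{2}\le e\sqrt{\ell}/p$. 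Using $\left\Vert \Sigma_{2}\right\Vert_{2}=\tau_{k+1}(A)$ and $\left\Vert \Sigma_{2}\right\Vert_{F}\le\sqrt{\min\{m,n\}}\,\tau_{k+1}(A)$ we obtain $\mathbb{E}[\left\Vert A-QQ^{*}A\right\Vert_{2}]\le\tau_{k+1}(A)\bigl(1+\sqrt{k/(p-1)}+(e\sqrt{\ell}/p)\sqrt{\min\{m,n\}}\bigr)$; finally $\sqrt{k/(p-1)}=\sqrt{k(p-1)}/(p-1)\le\sqrt{\ell\min\{m,n\}}/(p-1)$ (since $k<\ell$ and $p-1<\min\{m,n\}$) and $e/p<e/(p-1)<3/(p-1)$, so the parenthesis is at most $1+\frac{4\sqrt{\ell}}{p-1}\sqrt{\min\{m,n\}}$, which is the assertion.

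The main obstacle is the deterministic structural estimate: choosing the right explicit member of $\mathrm{range}(AR)$ and checking that replacing it by $P_{AR}$ does not increase the spectral error is the delicate matrix-analysis step, and it is also where the invertibility of $\Sigma_{1}$ — dealt with above by the case split on $\sigma_{k}$ — has to be finessed. The remaining, softer, difficulty is controlling the pseudoinverse moments $\mathbb{E}\left\Vert \Omega_{1}^{\dagger}\right\Vert_{F}$ and $\mathbb{E}\left\Vert \Omega_{1}^{\dagger}\right\Vert_{2}$, i.e.\ reciprocal-singular-value moments of a rectangular Gaussian matrix; finiteness of the first is precisely what forces $p\ge2$, while $k\ge2$ ensures the relevant moment bounds apply. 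Everything else is bookkeeping with $\sqrt{a+b}\le\sqrt a+\sqrt b$ and Jensen's inequality.
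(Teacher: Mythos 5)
This theorem is imported from Halko--Martinsson--Tropp and the paper offers no proof of its own, so the only fair comparison is with the source: your argument is precisely the HMT two-stage proof (their deterministic bound $\left\Vert A-QQ^{*}A\right\Vert _{2}^{2}\le\left\Vert \Sigma_{2}\right\Vert _{2}^{2}+\left\Vert \Sigma_{2}\Omega_{2}\Omega_{1}^{\dagger}\right\Vert _{2}^{2}$ followed by the Gaussian averaging via $\mathbb{E}\left\Vert SGT\right\Vert _{2}\le\left\Vert S\right\Vert _{2}\left\Vert T\right\Vert _{F}+\left\Vert S\right\Vert _{F}\left\Vert T\right\Vert _{2}$ and the inverse-Wishart moments), and your bookkeeping from $1+\sqrt{k/(p-1)}+(e\sqrt{\ell}/p)\sqrt{\min\{m,n\}}$ down to the stated constant $4\sqrt{\ell}/(p-1)\cdot\sqrt{\min\{m,n\}}$ is correct. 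The argument is sound; the only part left as a sketch is the derivation of the deterministic bound itself, which you correctly identify as the delicate step and attribute to the right place.
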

The probability of violating the estimate (\ref{eq:proto-alg-expected-error})
decays exponentially in $p$, 
\begin{equation}
\mbox{Pr}\left\{ \left\Vert A-QQ^{*}A\right\Vert \geq\tau_{k+1}(A)\left(1+9\sqrt{\ell}\cdot\sqrt{\min\{m,n\}}\right)\right\} \leq3\cdot p^{-p},\label{eq: randomize qs probability}
\end{equation}
implying that $p$ need not be large (e.g., $p=5$ or $p=10$). Thus,
with very high probability, the randomized method comes close to constructing
an optimal rank-$k$ approximation (the best possible bound in (\ref{eq:proto-alg-expected-error})
is $\tau_{k+1}(A)$ which follows from the optimality of the SVD).

Let $c_{A}$ denote the cost of applying $A$ to a vector, and let
$t_{R}$ denote the cost of generating a single random entry of $R$.
Constructing $S$ costs $\ell\cdot c_{A}$ operations, leading to
a total complexity on the order of

\begin{equation}
\begin{array}{ccccccc}
\ell\cdot n\cdot t_{R} & + & \ell\cdot c_{A} & + & \ell^{2}\cdot m & + & \ell\cdot c_{A}.\\
\mbox{Form}\,\, R &  & \mbox{Form}\,\, Y &  & \mbox{QR of}\,\, Y &  & \mbox{\mbox{Form}\,}S
\end{array}\label{eq:proto-alg-cost}
\end{equation}

\subsection{Interpolative matrix decomposition\label{sub:Interpolative-matrix-decompositi}}

Our approach also relies on computing the interpolative decomposition
of a matrix. The idea of matrix ID is to find, for a given accuracy
$\epsilon$, a near optimal set of columns (rows) of a matrix so that
the rest of columns (rows) can be represented as a linear combination
from the selected set. Algorithmically, this decomposition proceeds
via pivoted QR factorization and so is closely related to the factorization
(\ref{eq:qs-factorization}) in Section~\ref{sub:proto-algorithm}.
The fact that a basis for the range of $A$ is constructed purely
as a subset of its columns (not necessarily orthonormal) will be crucial
when dealing with CTDs.
\begin{lem}
\label{lem:matrix-id-lemma}\cite{MA-RO-TY:2011}(Martinsson, Rokhlin,
and Tygert) Suppose $A$ is an $m\times n$ matrix. Then, for any
positive integer $k$ with $k\leq m$ and $k\leq n$, there exist
a real $k\times n$ matrix $P$, and a real $m\times k$ matrix $\acs$
whose columns constitute a subset of the columns of $A$, such that

1. Some subset of the columns of $P$ makes up the $k\times k$ identity
matrix,

2. no entry of $P$ has an absolute value greater than 1,

3. $\left\Vert P\right\Vert _{2}\leq\sqrt{k(n-k)+1},$

4. the smallest singular value of $P$ is at least 1,

5. $\acs P=A$ when $k=m$ or $k=n$, and

6. $\left\Vert \acs P-A\right\Vert _{2}\leq\sqrt{k(n-k)+1}\tau_{k+1}(A)$
when $k<m$ and $k<n$, where $\tau_{k+1}(A)$ is the $k+1$ singular
value of $A$.
\end{lem}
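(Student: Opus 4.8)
The plan is to prove the lemma by a maximum-volume (maximum-determinant) choice of $k$ columns applied to a rank-$k$ factorization of the best rank-$k$ approximation of $A$. First I would dispose of the exact cases: if $k=n$ take $\acs=A$ and $P=I_{n}$, and if $k=m$ then $\mathrm{rank}(A)\le m=k$, so $\tau_{k+1}(A)=0$ and the construction below will already give $\acs P=A$. In the remaining case $k<m$, $k<n$, let $A^{(k)}$ be the rank-$k$ truncation of the SVD of $A$, so that $\Vert A-A^{(k)}\Vert_{2}=\tau_{k+1}(A)$, and write $A^{(k)}=BZ$ with $B\in\mathbb{R}^{m\times k}$ and $Z\in\mathbb{R}^{k\times n}$ of full rank $k$ (for instance $B=U_{k}$, $Z=\Sigma_{k}V_{k}^{*}$). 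Among the finitely many $k\times k$ submatrices of $Z$ obtained by selecting $k$ columns, choose one, $Z_{\mathcal{L}_{k}}$, of maximal $\left|\det\right|$; since $Z$ has rank $k$ this value is positive. Let $\mathcal{L}_{k}=\{l_{1},\dots,l_{k}\}$ be the corresponding column-index set, let $\acs$ be the matrix of those columns of $A$ (note: of $A$, not of $A^{(k)}$), and set $P=Z_{\mathcal{L}_{k}}^{-1}Z$. (If $\mathrm{rank}(A)<k$ the matrix $Z_{\mathcal{L}_{k}}$ can be singular; this degenerate case, where $\tau_{k+1}(A)=0$, I would treat separately by taking a maximum-volume skeleton of the $\mathrm{rank}(A)$ independent columns and appending arbitrary further columns of $A$, padding $P$ with the matching standard-basis columns and zeros.)

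Properties 1--4 then follow from the construction. The columns of $P$ with indices in $\mathcal{L}_{k}$ equal $Z_{\mathcal{L}_{k}}^{-1}Z_{\mathcal{L}_{k}}=I_{k}$, which is property~1. For property~2, the $(i,j)$ entry of $P$ is the $i$-th coordinate of $Z_{\mathcal{L}_{k}}^{-1}z_{j}$, where $z_{j}$ is the $j$-th column of $Z$; by Cramer's rule it equals $\pm\det(Z_{S})/\det(Z_{\mathcal{L}_{k}})$, where $Z_{S}$ is the $k$-column submatrix of $Z$ obtained from $Z_{\mathcal{L}_{k}}$ by replacing its $i$-th column with $z_{j}$, so maximality of $\left|\det Z_{\mathcal{L}_{k}}\right|$ bounds its absolute value by $1$. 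Up to a column permutation $P=\left[\begin{array}{cc}I_{k} & W\end{array}\right]$ with $W\in\mathbb{R}^{k\times(n-k)}$ and every entry of $W$ at most $1$ in absolute value; hence $PP^{*}=I_{k}+WW^{*}\succeq I_{k}$, giving $\sigma_{\min}(P)\ge1$ (property~4) and $\Vert P\Vert_{2}^{2}=1+\Vert W\Vert_{2}^{2}\le1+\Vert W\Vert_{F}^{2}\le1+k(n-k)$ (property~3).

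For the error estimates I would use the cancellation identity $\acs P-A=(A-A^{(k)})(\Pi_{\mathcal{L}_{k}}P-I_{n})$, where $\Pi_{\mathcal{L}_{k}}\in\mathbb{R}^{n\times k}$ is the column-selection matrix with $\acs=A\Pi_{\mathcal{L}_{k}}$. This holds because $A^{(k)}\Pi_{\mathcal{L}_{k}}P=BZ_{\mathcal{L}_{k}}Z_{\mathcal{L}_{k}}^{-1}Z=BZ=A^{(k)}$, so the $A^{(k)}$-part of $A$ cancels and only the $A-A^{(k)}$ contribution survives; in particular, when $A=A^{(k)}$ (the cases $k=m$ or $k=n$) this gives $\acs P=A$, which is property~5. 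In the block ordering that lists $\mathcal{L}_{k}$ first,
\[
\Pi_{\mathcal{L}_{k}}P-I_{n}=\left[\begin{array}{cc}0 & W\\0 & -I_{n-k}\end{array}\right],
\]
whose squared spectral norm is $\Vert W^{*}W+I_{n-k}\Vert_{2}=1+\Vert W\Vert_{2}^{2}\le1+k(n-k)$. Since restricting to a submatrix of columns does not increase the spectral norm, $\Vert\acs P-A\Vert_{2}\le\Vert A-A^{(k)}\Vert_{2}\,\Vert\Pi_{\mathcal{L}_{k}}P-I_{n}\Vert_{2}\le\tau_{k+1}(A)\sqrt{k(n-k)+1}$, which is property~6. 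The step I expect to be the main obstacle is obtaining the sharp constant $\sqrt{k(n-k)+1}$: a plain triangle inequality split into $\acs P-A^{(k)}$ and $A^{(k)}-A$ only yields roughly $(\sqrt{k(n-k)+1}+1)\,\tau_{k+1}(A)$, so the cancellation identity together with the block-form evaluation of $\Vert\Pi_{\mathcal{L}_{k}}P-I_{n}\Vert_{2}$ is what makes the bound tight, and one must keep careful track of the fact that $\acs$ is assembled from columns of $A$ itself rather than of $A^{(k)}$.
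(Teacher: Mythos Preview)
The paper does not actually prove this lemma: it is quoted from \cite{MA-RO-TY:2011} in the Preliminaries section with no argument supplied, so there is no in-paper proof to compare against.

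Your proof is correct and is the classical maximum-volume construction underlying this result (cf.\ \cite{GO-TY-ZA:1997,GO-ZA-TY:1997,GU-EIS:1996}). The Cramer-rule bound $|P_{ij}|\le1$, the block form $P=[I_{k}\mid W]$ giving properties~3 and~4, and the cancellation identity $\acs P-A=(A-A^{(k)})(\Pi_{\mathcal{L}_{k}}P-I_{n})$ yielding the sharp constant in property~6 are all handled correctly; you are right that a naive triangle-inequality split would lose the constant. One minor wording point: in the final step the phrase ``restricting to a submatrix of columns does not increase the spectral norm'' is not quite the inequality you use---what is actually needed is submultiplicativity $\|XY\|_{2}\le\|X\|_{2}\|Y\|_{2}$, together with the (trivial) fact that the zero columns of $\Pi_{\mathcal{L}_{k}}P-I_{n}$ contribute nothing to its norm. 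The handling of the degenerate case $\mathrm{rank}(A)<k$ is a little sketchy but routine to fill in.
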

Properties 1, 2, 3, and 4 ensure that the interpolative decomposition
of $A$ is numerically stable. 
\begin{rem}
In \cite{C-G-M-R:2005}, the authors show how the rank-revealing QR
factorization of $A$ is used to compute decomposition
\begin{equation}
A=\acs\left[\begin{array}{c|c}
I & T\end{array}\right]P_{c}^{*}+X,\label{eq:column-matrix-id}
\end{equation}
where $\acs$ is the column skeleton of $A$, $P=\left[\begin{array}{c|c}
I & T\end{array}\right]P_{c}^{*}$ and $X=\acs P-A$ as in Lemma \ref{lem:matrix-id-lemma}. The matrix
$P_{c}$ is an $n\times n$ permutation matrix and $T$ is a $k\times(n-k)$
matrix that solves an associated linear system (see \cite{C-G-M-R:2005}
for details). This decomposition can be extended to include both rows
and columns, yielding a $k\times k$ skeleton matrix $\as$ such that
\begin{equation}
A=P_{r}^{*}\left[\begin{array}{c}
I\\
\hline S
\end{array}\right]\as\left[\begin{array}{c|c}
I & T\end{array}\right]P_{c}+X.\label{eq:full-skeleton-matrix-id}
\end{equation}
Here $P_{r}$ is an $m\times m$ permutation matrix and $S$ is a
$(m-k)\times k$ matrix analogous to $T$. While the full decomposition
(\ref{eq:full-skeleton-matrix-id}) is used in the proof of Theorem~\ref{thm:gram-id-error}
(see the Online Supplement), the column oriented decomposition (\ref{eq:column-matrix-id})
is the main ingredient in the tensor ID described in this paper.
\end{rem}

\subsection{A randomized algorithm for interpolative matrix decomposition\label{sub:A-randomized-algorithm-for-matrix-id}}

The deterministic calculation of the interpolative decomposition is
as expensive as the standard QR. However, when $A$ is of low rank,
the matrix ID can be constructed efficiently using a randomized algorithm
described in \cite{L-W-M-R-T:2007,W-L-R-T:2008,MA-RO-TY:2011,HA-MA-TR:2011}
under slightly weaker conditions than those in Lemma~\ref{lem:matrix-id-lemma}.
The randomized matrix ID algorithm is summarized in this paper as
Algorithm~\ref{alg:Randomized-matrix-ID}. 

\begin{algorithm}[H]
\begin{raggedright}
\caption{Randomized Algorithm for Matrix ID \cite{MA-RO-TY:2011}\label{alg:Randomized-matrix-ID}}

\par\end{raggedright}

\begin{raggedright}
Input: An $m\times n$ matrix $A$ and integer $\ell>k$.
\par\end{raggedright}

\begin{raggedright}
Output: Indices $\mathcal{L}_{k}$ of the $k$ skeleton columns, the
$k\times n$ coefficient matrix $P$, and the $m\times k$ column
skeleton matrix $\acs$.
\par\end{raggedright}
\begin{enumerate}
\item \begin{raggedright}
Generate an $m\times\ell$ Gaussian random matrix, $R$.
\par\end{raggedright}
\item \begin{raggedright}
Form the $\ell\times n$ matrix, $Y=R^{*}A$.
\par\end{raggedright}
\item \begin{raggedright}
Construct QR factorization of $Y$.
\par\end{raggedright}
\item \begin{raggedright}
Using Lemma~\ref{lem:matrix-id-lemma}, construct $\mathcal{L}_{k}$
and $P$ from the QR of $Y$.
\par\end{raggedright}
\item \raggedright{}Collect the $k$ skeleton columns into the matrix $\acs$.\end{enumerate}
\end{algorithm}

Steps~1--3 of Algorithm~\ref{alg:Randomized-matrix-ID} are the
same as those of Algorithm~\ref{alg:Randomized-algorithm-for-approximating-the-range}
applied to $A^{*}$; the remainder of the steps reorganize the QR
factorization of the projection matrix $Y$. The cost of collecting
the $k$ columns of $A$ into matrix $\acs$ requires $\mathcal{O}(k\cdot m)$
operations. Hence, the computational cost of the randomized ID is
estimated as 
\begin{equation}
\begin{array}{ccccccc}
\ell\cdot m\cdot t_{R} & + & \ell\cdot c_{A} & + & k\cdot\ell\cdot n & + & k\cdot m.\\
\mbox{Form}\,\, R &  & \mbox{Form}\,\, Y &  & \mbox{ID of}\,\, Y &  & \mbox{Form}\,\,\acs
\end{array}\label{eq:matrix-id-cost}
\end{equation}

To provide an informal description of the properties of the randomized
matrix ID, we state
\begin{lem}
\label{lem:(Observation-3.3)}(Observation 3.3 of \cite{MA-RO-TY:2011})
The randomized matrix ID algorithm constructs matrices $\acs$ and
$P$ such that

1. some subset of the columns of $P$ makes up the $k\times k$ identity
matrix

2. no entry of $P$ has absolute values greater than $2$,

3. $\left\Vert P\right\Vert _{2}\leq\sqrt{4k\left(n-k\right)+1}$,

4. the smallest singular value of $P$ is at least 1,

5. $\acs P=A$ when $k=m$ or $k=n$,

6. $\left\Vert \acs P-A\right\Vert _{2}\leq\sqrt{4k\left(n-k\right)+1}\tau_{k+1}(A)$
when $k<m$ and $k<n$, where $\tau_{k+1}(A)$ is the $k+1$ singular
value of $A$.
\end{lem}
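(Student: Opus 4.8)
The plan is to reduce every claim to a statement about the deterministic matrix ID of the small $\ell\times n$ matrix $Y=R^{*}A$ produced in Steps~1--3 of Algorithm~\ref{alg:Randomized-matrix-ID}, using the single observation that picking the columns of $A$ and of $Y$ with the \emph{same} index set $\mathcal{L}_{k}$ gives $Y_{c}=R^{*}\acs$, where $Y_{c}$ is the column skeleton of $Y$. Step~4 applies Lemma~\ref{lem:matrix-id-lemma} to $Y$ in its strong rank-revealing QR form with pivoting tolerance relaxed to $f=2$, i.e. the decomposition \eqref{eq:column-matrix-id} applied to $Y$, $Y=Y_{c}\left[\begin{array}{c|c}I & T\end{array}\right]P_{c}^{*}+X_{Y}$, in which the entries of $T$ satisfy $|T_{ij}|\le f=2$ and $\|X_{Y}\|_{2}\le\sqrt{1+f^{2}k(n-k)}\,\tau_{k+1}(Y)$. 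Since Algorithm~\ref{alg:Randomized-matrix-ID} returns exactly $P=\left[\begin{array}{c|c}I & T\end{array}\right]P_{c}^{*}$, properties~1--4 are inherited verbatim: property~1 because the columns of $P$ indexed by the ``$I$'' block of $P_{c}$ form $I_{k}$; property~2 because the entries of $P$ are those of $I_{k}$ and of $T$; property~3 because $PP^{*}=I+TT^{*}$ yields $\|P\|_{2}^{2}=1+\|T\|_{2}^{2}\le1+\|T\|_{F}^{2}\le1+4k(n-k)$; and property~4 because $PP^{*}\succeq I$ forces $\sigma_{\min}(P)\ge1$. The constants $2$ and $\sqrt{4k(n-k)+1}$, in place of the $1$ and $\sqrt{k(n-k)+1}$ of Lemma~\ref{lem:matrix-id-lemma}, are precisely the price of the cheaper slack-$2$ pivoting.

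For property~5, first take $k=n$: then $\mathcal{L}_{k}=\{1,\dots,n\}$, $P=I_{n}$, $\acs=A$, and $\acs P=A$ trivially. Next take $k=m$: then $\operatorname{rank}(Y)\le\operatorname{rank}(A)\le m=k$, so $\tau_{k+1}(Y)=0$ and $X_{Y}=0$, i.e. $Y_{c}P=Y$; using $Y_{c}=R^{*}\acs$ and $Y=R^{*}A$ this reads $R^{*}(\acs P-A)=0$. Since $R$ is an $m\times\ell$ Gaussian matrix with $\ell>k=m$, it has full row rank $m$ with probability one, so $R^{*}$ is injective on $\mathbb{R}^{m}$ and therefore $\acs P=A$ almost surely.

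Property~6 is the substantive claim and the one I expect to be the main obstacle. From $Y_{c}P-Y=R^{*}(\acs P-A)$ and the strong-RRQR error bound for $Y$ we obtain $\|R^{*}(\acs P-A)\|_{2}\le\sqrt{4k(n-k)+1}\,\tau_{k+1}(R^{*}A)$. When $\operatorname{rank}(A)\le k$ this already finishes the argument: the right-hand side vanishes, and the reasoning of property~5 (injectivity of $R^{*}$ on the $\le k$-dimensional subspace $\operatorname{col}(A)$, valid almost surely) gives $\acs P=A$, which matches the stated bound $0$. In the general case one combines $\tau_{k+1}(R^{*}A)\le\|R^{*}\|_{2}\,\tau_{k+1}(A)$ (taking the rank-$k$ truncation $A_{k}$ of $A$, the matrix $R^{*}A_{k}$ has rank $\le k$, so $\tau_{k+1}(R^{*}A)\le\|R^{*}(A-A_{k})\|_{2}\le\|R^{*}\|_{2}\tau_{k+1}(A)$) with the inclusion $\operatorname{col}(\acs P-A)\subseteq\operatorname{col}(A)$, so that $\|\acs P-A\|_{2}$ is recovered from $\|R^{*}(\acs P-A)\|_{2}$ up to the conditioning of $R^{*}$ restricted to the dominant singular subspace of $A$. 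The hard part is that this last transfer is constant-free, as the statement requires, only in the rank-deficient regime; in general the distortion of the Gaussian embedding $R^{*}$ on a $k$-dimensional subspace enters, and one must invoke the concentration-of-measure estimates behind \eqref{eq:proto-alg-expected-error}--\eqref{eq: randomize qs probability} (equivalently, a Johnson--Lindenstrauss bound for $R^{*}$) to conclude that property~6 holds with the stated constant with high probability, or in expectation. This is exactly the content of Observation~3.3 of \cite{MA-RO-TY:2011}, and the point at which, as noted in the text, a fully deterministic tensor-level statement still requires further work.
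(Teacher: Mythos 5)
The paper does not prove this lemma at all: it is stated verbatim as a citation of Observation~3.3 of \cite{MA-RO-TY:2011} with the explicit caveat that it is an ``informal description'' of the randomized matrix ID. So there is no in-paper proof to compare against; you are attempting a reconstruction of the cited result, which is a different (and more ambitious) task than the paper undertakes.

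Your treatment of properties~1--4 is sound. The returned $P$ is exactly the coefficient matrix of the deterministic ID of the small matrix $Y=R^{*}A$ computed by a strong rank-revealing QR with relaxed pivoting tolerance $f=2$, and this is the only source of the factor $2$ versus $1$ (and $\sqrt{4k(n-k)+1}$ versus $\sqrt{k(n-k)+1}$) when compared with Lemma~\ref{lem:matrix-id-lemma}. Your computation of $\|P\|_2^2 = 1+\|T\|_2^2 \le 1+4k(n-k)$ and the observation $PP^{*}\succeq I$ for property~4 are both correct. Property~5 is also handled correctly, noting that for $k=m$ you get $R^{*}(\acs P - A)=0$ and injectivity of $R^{*}$ holds only almost surely, which is consistent with the statement being about a \emph{randomized} algorithm.

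For property~6 you correctly flag the genuine gap, but your characterization of it slightly undersells the difficulty. First, $\operatorname{col}(\acs P - A)\subseteq\operatorname{col}(A)$ need not be a $k$-dimensional subspace: when $\operatorname{rank}(A)$ exceeds $\ell$, the map $R^{*}$ is not even injective on $\operatorname{col}(A)$, so bounding $\|\acs P-A\|_2$ via $\|R^{*}(\acs P-A)\|_2$ divided by a smallest restricted singular value of $R^{*}$ is not available; one must first split $A$ into its dominant rank-$k$ piece and a residual, and the two pieces interact. Second, and more subtly, the chosen skeleton indices $\mathcal{L}_k$ (hence $\acs$ and $P$) \emph{depend on the realization of} $R$, so the vector $\acs P - A$ is not a fixed object to which a Johnson--Lindenstrauss estimate can be applied; a union bound or conditioning argument on the subspace captured by $Y$ is needed. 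Finally, even the direction $\tau_{k+1}(R^{*}A)\le\|R^{*}\|_2\tau_{k+1}(A)$ that you invoke is not constant-free, since $\|R^{*}\|_2\sim\sqrt{m}+\sqrt{\ell}$ for Gaussian $R$. The upshot is that property~6 cannot be made deterministic in the form written, which is precisely why the source paper states it probabilistically and why the present paper qualifies the whole lemma as informal. Your proof sketch reaches the same conclusion, and if you want to close the gap you should follow the range-capture argument of \cite{HA-MA-TR:2011} or the original argument of \cite{MA-RO-TY:2011} rather than trying to directly invert $R^{*}$ on $\operatorname{col}(A)$.
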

Comparing this lemma with Lemma~\ref{lem:matrix-id-lemma}, the only
difference is the appearance of a extra factor of size $\sim2$ in
Properties 2, 3 and 6.
\begin{rem}
The proofs of the results summarized above assume that the random
variables are Gaussian with zero mean and unit variance. However,
it is well known that effectiveness of many randomized algorithms
is somewhat independent of the distribution. Numerical experiments
show that uniform, Bernoulli, log normal and several other distributions
also work well in this context. This observation is an important for
the discussion of randomized tensor projections (see Section~\ref{sub:Random-tensors-in-canonical form}).
\end{rem}

\subsection{Q-factorization for tensors\label{sub:intro-Q-factorization}}

Collecting together the vectors $\mathbf{u}_{j}^{(l)}$ in (\ref{eq:CTD})
from all terms $l=1,\dots,r$, we define the $M_{j}\times r$ directional
component matrices,
\begin{equation}
U_{(j)}=\left[\begin{array}{cccc}
| & | &  & |\\
\mathbf{u}_{j}^{(1)} & \mathbf{u}_{j}^{(2)} & \cdots & \mathbf{u}_{j}^{(r)}\\
| & | &  & |
\end{array}\right],\,\,\,\,\,\,\,\,\,\,\,\,\, j=1,\dots,d.\label{eq:component-matrices}
\end{equation}
We refer to the component index $j$ as the direction -- rather than
dimension -- since these components are not necessarily univariate. 

Factorizing the directional component matrices of the tensor $\mathcal{U}$
leads to a factorization of $\mathcal{U}$ itself. Consider 
\begin{equation}
U_{(j)}=Q_{(j)}S_{(j)},\,\,\,\,\,\,\,\,\,\, j=1,\dots,d,\label{eq:introQS}
\end{equation}
where $Q_{(j)}$ is an $M_{j}\times k_{j}$ matrix with orthonormal
columns and $S_{(j)}$ is a $k_{j}\times r$ directional component
matrix,
\begin{equation}
S_{(j)}=\left[\begin{array}{cccc}
| & | &  & |\\
\mathbf{s}_{j}^{(1)} & \mathbf{s}_{j}^{(2)} & \cdots & \mathbf{s}_{j}^{(r)}\\
| & | &  & |
\end{array}\right],\,\,\,\,\,\,\,\,\,\,\,\,\, j=1,\dots,d.\label{eq:component-matrices-1}
\end{equation}
Here $k_{j}$ is the rank of the matrix $U_{(j)}$ (for a given accuracy
$\epsilon$). Given (\ref{eq:introQS}), it is straightforward to
demonstrate that tensor $\mathcal{U}$ in (\ref{eq:CTD}) admits the
decomposition 
\begin{equation}
\mathcal{U}=\mathcal{Q}\mathcal{S},\,\,\,\,\,\,\,\,\,\,\,\,\,\mbox{where}\,\,\,\,\,\,\,\,\,\,\,\,\mathcal{Q}=\bigotimes_{j=1}^{d}Q_{(j)},\,\,\,\,\,\,\,\,\,\,\mathcal{S}=\sum_{l=1}^{r}\sigma_{l}\bigotimes_{j=1}^{d}\mathbf{s}_{j}^{(l)}.\label{eq:intro-QS}
\end{equation}
We call this -- the $d$ independent factorizations of the component
matrices $U_{(j)}$ -- the Q-factorization of tensor $\mathcal{U}$.
Notice that $\mathcal{Q}$ is a separation rank-one operator, and
$\mathcal{S}$ is a CTD that typically has significantly smaller component
dimensions than $\mathcal{U}$. Since $\mathcal{S}\in\bigotimes_{j=1}^{d}\mathbb{R}^{k_{j}}$
with $k_{j}\leq r$, for many problems of practical interest, we have
$k_{j}\ll M_{j}$ for some or all of the directions $j=1,\dots d$.
We note that Q-factorization is not new and is known under different
names, e.g., CANDELINC or simply compression (see Section 5 of \cite{KOL-BAD:2009}
and references therein). 

It is easy to show 
\begin{lem}
\label{lem:q-is-semi-unitary}Suppose that a rank-$r$ canonical tensor
admits the Q-factorization ~ $\mathcal{U}=\mathcal{Q}\mathcal{S}$
as in (\ref{eq:intro-QS}). Then we obtain 
\begin{equation}
\left\Vert \mathcal{U}\right\Vert _{F}=\left\Vert \mathcal{S}\right\Vert _{F}.\label{eq:s-and-u-norms-equal}
\end{equation}

\end{lem}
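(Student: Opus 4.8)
The plan is to reduce the claim to the elementary fact that each $Q_{(j)}$ has orthonormal columns, i.e. $Q_{(j)}^{*}Q_{(j)}=I_{k_{j}}$, together with the CTD expansion of the Frobenius inner product in (\ref{eq:intro-frobenius-inner-product-for-CTD}).

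First I would record how the rank-one operator $\mathcal{Q}=\bigotimes_{j=1}^{d}Q_{(j)}$ acts on a rank-one tensor: it acts componentwise, so $\mathcal{Q}\bigl(\bigotimes_{j=1}^{d}\mathbf{s}_{j}^{(l)}\bigr)=\bigotimes_{j=1}^{d}\bigl(Q_{(j)}\mathbf{s}_{j}^{(l)}\bigr)$. Applying this to $\mathcal{S}=\sum_{l=1}^{r}\sigma_{l}\bigotimes_{j=1}^{d}\mathbf{s}_{j}^{(l)}$ and using linearity gives $\mathcal{U}=\mathcal{Q}\mathcal{S}=\sum_{l=1}^{r}\sigma_{l}\bigotimes_{j=1}^{d}\bigl(Q_{(j)}\mathbf{s}_{j}^{(l)}\bigr)$, which is again a CTD of rank $r$ whose $l$-th directional component in direction $j$ is $Q_{(j)}\mathbf{s}_{j}^{(l)}$.

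Next I would compute $\left\Vert\mathcal{U}\right\Vert_{F}^{2}=\langle\mathcal{U},\mathcal{U}\rangle$ via the CTD formula (\ref{eq:intro-frobenius-inner-product-for-CTD}), obtaining $\langle\mathcal{U},\mathcal{U}\rangle=\sum_{l=1}^{r}\sum_{m=1}^{r}\sigma_{l}\sigma_{m}\prod_{j=1}^{d}\langle Q_{(j)}\mathbf{s}_{j}^{(l)},Q_{(j)}\mathbf{s}_{j}^{(m)}\rangle$. The key step is the factor-by-factor identity $\langle Q_{(j)}\mathbf{s}_{j}^{(l)},Q_{(j)}\mathbf{s}_{j}^{(m)}\rangle=\langle\mathbf{s}_{j}^{(l)},Q_{(j)}^{*}Q_{(j)}\mathbf{s}_{j}^{(m)}\rangle=\langle\mathbf{s}_{j}^{(l)},\mathbf{s}_{j}^{(m)}\rangle$, valid because each $Q_{(j)}$ is semi-unitary. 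Substituting, the product over $j$ collapses to $\prod_{j=1}^{d}\langle\mathbf{s}_{j}^{(l)},\mathbf{s}_{j}^{(m)}\rangle$, so $\langle\mathcal{U},\mathcal{U}\rangle=\sum_{l,m}\sigma_{l}\sigma_{m}\prod_{j=1}^{d}\langle\mathbf{s}_{j}^{(l)},\mathbf{s}_{j}^{(m)}\rangle=\langle\mathcal{S},\mathcal{S}\rangle=\left\Vert\mathcal{S}\right\Vert_{F}^{2}$, once more by (\ref{eq:intro-frobenius-inner-product-for-CTD}); taking square roots yields (\ref{eq:s-and-u-norms-equal}).

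There is no substantive obstacle; the only care needed is the bookkeeping with the Kronecker-product notation, namely that $\mathcal{Q}$ distributes over the tensor factors and that the Frobenius inner product of two rank-one tensors factors as a product of component inner products, both immediate from the definitions. Equivalently, one may package the computation abstractly: since $\mathcal{Q}^{*}\mathcal{Q}=\bigotimes_{j=1}^{d}(Q_{(j)}^{*}Q_{(j)})=\bigotimes_{j=1}^{d}I_{k_{j}}$ is the identity on $\bigotimes_{j=1}^{d}\mathbb{R}^{k_{j}}$, we get $\langle\mathcal{Q}\mathcal{S},\mathcal{Q}\mathcal{S}\rangle=\langle\mathcal{S},\mathcal{Q}^{*}\mathcal{Q}\mathcal{S}\rangle=\langle\mathcal{S},\mathcal{S}\rangle$, which is the same argument stated more compactly.
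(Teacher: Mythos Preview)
Your proof is correct and is exactly the argument the paper has in mind: the lemma is stated with the preamble ``It is easy to show'' and no explicit proof is given, so the intended verification is precisely the semi-unitarity computation $Q_{(j)}^{*}Q_{(j)}=I_{k_{j}}$ applied factor-by-factor through the CTD inner product formula (\ref{eq:intro-frobenius-inner-product-for-CTD}), as you have written out.
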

Due to this Lemma, the accuracy of the rank reduction for the tensor
$\mathcal{S}$ is the same as that for the original tensor $\mathcal{U}$.
To see this, let $\mathcal{S}_{k}=\sum_{l=1}^{k}\hat{\sigma}_{l}\bigotimes_{j=1}^{d}\hat{\mathbf{s}}_{j}^{(l)}$
and assume it satisfies
\begin{equation}
\left\Vert \mathcal{S}-\mathcal{S}_{k}\right\Vert _{F}\leq\epsilon\left\Vert \mathcal{S}\right\Vert _{F}.\label{eq: s minus sk norm}
\end{equation}
Then, setting $\mathcal{U}_{k}=\mathcal{Q}\mathcal{S}_{k}$, we have
\begin{equation}
\left\Vert \mathcal{U}-\mathcal{U}_{k}\right\Vert _{F}=\left\Vert \mathcal{Q}(\mathcal{S}-\mathcal{S}_{k})\right\Vert _{F}\,\,=\,\,\left\Vert \mathcal{S}-\mathcal{S}_{k}\right\Vert _{F}\,\,\leq\,\,\epsilon\left\Vert \mathcal{U}\right\Vert _{F}.\label{eq: u minus uk norm}
\end{equation}

\section{Randomized Tensor Interpolative Decomposition \label{sec:A-new-approach-to-the-reduction-of-tensor-rank}}

\subsection{On the interpolative decomposition of symmetric matrices\label{sub:id-of-symmetric-matrices}}

We briefly describe a matrix ID for symmetric matrices because it
parallels the development for tensors (and, to our knowledge, its
description is not available in the literature). Suppose $A$ is an
$m\times n$ matrix of rank $k$ (for a given accuracy $\epsilon$)
where $m\gg n$, and let $B=A^{*}A$ denote its $n\times n$ Gram
matrix (see \ref{eq:umat} and \ref{eq:intro-gmat}). Theorem~\ref{thm:gram-id}
below relates the error in approximating $B$ by a symmetric matrix
ID to that of approximating $A$ by a corresponding matrix ID that
uses only the decomposition of $B$. The error estimate is of interest
in cases where the cost of constructing the matrix ID of $B$ is relatively
small compared with that of $A$.
\begin{thm}
\label{thm:gram-id}Suppose $A$ is an $m\times n$ matrix of rank
$k$. Then the $n\times n$ matrix $B=A^{*}A$ admits a symmetric
interpolative decomposition of the form 
\begin{equation}
B_{k}=P^{*}\bs P,\label{eq: symmetric id of gram matrix in theorem}
\end{equation}
where $\bs=\acs^{*}\acs$. The $m\times k$ column skeleton $\acs$
of $A$, and $k\times n$ coefficient matrix $P$ can be computed
using only $B$. In addition, 
\begin{equation}
\left\Vert A-A_{c}P\right\Vert _{2}=\left\Vert B-B_{k}\right\Vert _{2}^{1/2}\leq C(n,k)\tau_{k+1}(A),\label{eq: gram id error estimate}
\end{equation}
where
\begin{equation}
C(n,k)=\left(4k(n-k)+1\right)^{1/4}\left(1+\sqrt{nk(n-k)}\right)^{1/2}=\mathcal{O}\left(n^{3/4}\right),\label{eq: gram id error estimate order of constant}
\end{equation}
and $\tau_{k+1}(A)$ is the $k+1$ singular value of $A$.
\end{thm}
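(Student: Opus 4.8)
The plan is to run the randomized matrix ID algorithm (Algorithm~\ref{alg:Randomized-matrix-ID}) directly on the Gram matrix $B=A^*A$ and then transfer the resulting column/row structure back to $A$. Since $B$ is symmetric, the column skeleton selection produces a set of $k$ indices $\mathcal{L}_k\subseteq\{1,\dots,n\}$ and a coefficient matrix $P$ such that the \emph{rows} of $B$ indexed by $\mathcal{L}_k$ also form a good row skeleton; concretely, one applies the two-sided (full skeleton) decomposition (\ref{eq:full-skeleton-matrix-id}) with the \emph{same} index set on both sides, which is legitimate because $B$ is symmetric and the algorithm selects indices, not orthonormal vectors. This gives $B_k = P^* B_S P$ with $B_S$ the $k\times k$ principal submatrix of $B$ on $\mathcal{L}_k\times\mathcal{L}_k$. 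The key algebraic identity is that $B_S = (A_c)^* A_c$ where $A_c$ collects the columns of $A$ indexed by $\mathcal{L}_k$: this is immediate since $B_{ij}=\langle a_i,a_j\rangle$, so restricting both indices to $\mathcal{L}_k$ restricts $A$ to $A_c$. Hence $B_k = P^*(A_c)^*A_c P = (A_cP)^*(A_cP)$, which is the claimed form (\ref{eq: symmetric id of gram matrix in theorem}) with $B_S=B_c$. Note that $P$, being built from the QR of a projection of $B$ (equivalently of the columns of $A$ up to the Gram pairing), and the index set $\mathcal{L}_k$ depend only on $B$, so $A_c$ and $P$ are indeed computable from $B$ alone — one never needs the $m\times n$ matrix $A$ itself, only the ability to extract the selected columns at the end.

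The norm identity $\left\Vert A-A_cP\right\Vert_2 = \left\Vert B-B_k\right\Vert_2^{1/2}$ follows from a general fact: for any matrix $M$, $\left\Vert M\right\Vert_2^2 = \left\Vert M^*M\right\Vert_2$. Apply this with $M = A - A_cP$. Then $M^*M = (A-A_cP)^*(A-A_cP) = A^*A - (A_cP)^*A - A^*(A_cP) + (A_cP)^*(A_cP)$. The plan is to show this equals $B - B_k$, i.e. that the cross terms collapse correctly. This requires the interpolation property that $A_cP$ and $A$ agree on the selected columns together with the symmetry of the construction — specifically, the full two-sided decomposition (\ref{eq:full-skeleton-matrix-id}) for $B$ gives $B = P^*\!\left[\begin{smallmatrix}I\\ S\end{smallmatrix}\right]B_S\left[\begin{smallmatrix}I & T\end{smallmatrix}\right]P + X$ with $S=T^*$ by symmetry; contracting, one checks that $(A_cP)^*A = (A_cP)^*(A_cP) = B_k$ on the nose (up to the error that is already captured in $X$), making the cross terms and the last term coincide so that $M^*M = B - B_k$. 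I would carry this out by writing $A_cP$ in block form relative to the permutation $P_c$ and expanding; the bookkeeping is routine once the symmetric two-sided decomposition is in hand.

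For the final inequality, apply Lemma~\ref{lem:(Observation-3.3)} (Observation 3.3) to the matrix $B$ with target rank $k$: since $B$ has rank $k$ exactly (as $A$ does), we are in the regime $k<n$, giving $\left\Vert B - B_k\right\Vert_2 \le \sqrt{4k(n-k)+1}\,\tau_{k+1}(B)$. Now use the spectral-value relation $\tau_{k+1}(B) = \tau_{k+1}(A^*A) = \tau_{k+1}(A)^2$. Wait — here a care is needed: the bound from Observation 3.3 is in terms of the singular value of $B$, and we want to further bound $\left\Vert B - B_k\right\Vert_2^{1/2}$ in terms of $\tau_{k+1}(A)$ times the stated constant $C(n,k)$, which has an \emph{extra} factor $\left(1+\sqrt{nk(n-k)}\right)^{1/2}$. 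This extra factor must come from controlling $\left\Vert B - B_k\right\Vert_2$ not just via $\tau_{k+1}(B)$ but via an estimate that accounts for the amplification introduced by applying the ID constructed for $B$ back to $A$ — essentially, $B - B_k$ picks up a factor $\left\Vert P\right\Vert_2 \le \sqrt{4k(n-k)+1}$ from each side, and one routes the estimate through $\left\Vert A - A_cP\right\Vert_2 \le (1+\left\Vert P\right\Vert_2\cdot\text{something})\,\tau_{k+1}(A)$ in the spirit of the standard ID error bound, then squares. Assembling: $C(n,k)^4 = \bigl(4k(n-k)+1\bigr)\bigl(1+\sqrt{nk(n-k)}\bigr)^2$, matching (\ref{eq: gram id error estimate order of constant}), and the order estimate $\mathcal{O}(n^{3/4})$ follows since $4k(n-k)+1=\mathcal{O}(n)$ contributes $\mathcal{O}(n^{1/4})$ and $\bigl(1+\sqrt{nk(n-k)}\bigr)^{1/2}=\mathcal{O}(n^{1/2})$. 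The main obstacle I anticipate is precisely pinning down this second factor honestly — i.e. proving that the ID \emph{built from $B$} yields, when its $P$ and index set are transplanted onto $A$, a genuine rank-$k$ approximation $A_cP$ with the claimed error, since $P$ was optimized against $B$'s geometry, not $A$'s, and the Gram pairing can distort the relevant angles. This is where the symmetric-ID structure and the $\left\Vert M\right\Vert_2^2=\left\Vert M^*M\right\Vert_2$ identity must be used together, rather than invoking a generic matrix-ID bound as a black box.
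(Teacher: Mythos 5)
Your proposal has the right skeleton — run a matrix ID on $B$, exploit symmetry to get a two-sided decomposition, observe $B_S = A_c^*A_c$, and convert via $\|M\|_2^2 = \|M^*M\|_2$ — and these are indeed the paper's ingredients. But it contains one assertion that is false in the only regime where the theorem is nontrivial, and that assertion is precisely the crux the paper's supplement spends its effort on.

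You claim that the two-sided decomposition of $B$ has $S = T^*$ ``by symmetry,'' which would make the symmetrization $B_k = P^*B_S P$ come for free. This fails whenever $\tau_{k+1}(A) > 0$. The column coefficient matrix $T$ is produced by the pivoted QR of the \emph{full} matrix $B$, so it only solves the normal equations $\acs^*\acs T = \acs^*\hat{A}$ approximately, with residual controlled by $\epsilon_k$ (cf.~(\ref{eq:column-id-submatrix-error-bounds})). The row coefficient matrix $S$, by contrast, comes from the QR of $\bcs^* = \acs^*A$, which has \emph{exact} rank $k$; hence $S$ solves $\acs^*\acs S = \acs^*\hat{A}$ exactly (Corollary~\ref{cor:P-projector}). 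The paper's Remark after that corollary explicitly warns that swapping $\hat{P} = [I\,|\,T]P_c^*$ for $P = [I\,|\,S]P_c^*$ changes the answer. The extra factor $(1+\sqrt{nk(n-k)})^{1/2}$ you could not pin down is exactly the price of that swap: Lemma~\ref{lem:id-symmetric} shows that replacing the right-hand $\hat{P}$ with $P$ (to force the decomposition to be genuinely symmetric) costs a factor $1 + \|S\|_2 \le 1 + \sqrt{nk(n-k)}$ in $\|B - B_k\|_2$, which after combining with the matrix-ID bound $\epsilon_k \le \sqrt{4k(n-k)+1}\,\tau_{k+1}(B)$ and taking a square root gives $C(n,k)$.

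Your argument for $\|A - A_cP\|_2 = \|B - B_k\|_2^{1/2}$ also relies on cross terms ``collapsing correctly,'' which you leave as routine bookkeeping. In fact the identity holds exactly \emph{because} $P$ is the normal-equations solution: then $A_cP = Q_kQ_k^*A$ is the orthogonal projection of $A$ onto the skeleton column space, so $B - B_k = (A - A_cP)^*(A - A_cP)$ identically (Lemma~\ref{lem:gram-id-error}). If you tried the same computation with $\hat{P}$ the cross terms would \emph{not} cancel and the identity would be false. So the distinction you elided is not a bookkeeping detail — it is what makes both the norm identity exact and the error bound have the form it does.
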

The proof of Theorem~\ref{thm:gram-id} is similar to that of Theorem
3 in \cite{C-G-M-R:2005} and appears in the Online Supplement. 

Theorem~\ref{thm:gram-id} allows us to compute the interpolative
decomposition of $A$ via its Gram matrix, $B$. However, for the
matrix ID of $A$ to have accuracy $\epsilon$, we have to compute
the ID of $B$ to an accuracy of $\epsilon^{2}$. This presents a
limitation since, if $\epsilon^{2}<\epsilon_{machine}$, this approach
will select all columns of $B$ . Thus, in general, the ID of $A$
can be computed in this manner only for accuracies $\sqrt{\epsilon_{machine}}<\epsilon$. 
\begin{rem}
An exception to the last statement occurs if $B$ happens to be a
structured matrix of a particular type (see \cite{DEMMEL:1999}) since
then its singular values can be computed with relative precision.
For example, if the univariate functions in separated representation
(\ref{eq: separted representation}) are exponentials or Gaussians,
then $B$ is a Cauchy matrix which is the key example in \cite{DEMMEL:1999}.
\end{rem}

\subsection{Tensor ID problem\label{sub:Tensor-ID-problem}}

Our goal is to reduce the tensor ID problem to that of skeletonization
of certain matrices and to relate the error of the tensor ID to that
of the corresponding matrix ID. A CTD in (\ref{eq:introCTDelements})
can always be interpreted as a dense tensor with $N=\prod_{j=1}^{d}M_{j}$
elements and represented as the $N\times r$ matrix $U$ in (\ref{eq:umat}).
While the formation of such a matrix is in\textit{ no way practical},
it allows us to establish a connection between the matrix and tensor
settings. 

Assume for a moment that we are able to compute the matrix ID of $U$
directly as well as the corresponding tensor ID of $\mathcal{U}$.
As already mentioned in Section~\ref{sub:intro-Tensor-interpolative-decomposition},
given a rank-$k$ matrix ID of $U$, we can construct a rank-$k$
approximation $\mathcal{U}_{k}$ via
\begin{equation}
\mathcal{U}_{k}=\sum_{m=1}^{k}\alpha_{m}\mathcal{U}^{(l_{m})},\,\,\,\,\,\,\,\,\,\,\,\,\,\alpha_{m}=\sigma_{m}\sum_{l=1}^{r}P_{ml},\label{eq:ID-coefficients}
\end{equation}
where $l_{m}\in\mathcal{L}_{k}$ are the $k$ indices of the skeleton
columns of $U$ and $P$ is the $k\times r$ coefficient matrix. Each
of the original rank-one terms $\mathcal{U}^{(l)}$ of $\mathcal{U}$
in (\ref{eq:CTD}) has its own representation via the skeleton terms,
\begin{eqnarray}
\mathcal{U}_{k}^{(l)} & = & \sum_{m=1}^{k}P_{ml}\mathcal{U}^{(l_{m})},\,\,\,\,\,\,\,\,\,\,\,\,\,\,\, l=1,\dots,r.\label{eq:u-term-as-vector}
\end{eqnarray}
Now define the $N\times r$ matrix
\begin{equation}
U-U_{k}=\left[\begin{array}{cccc}
| & | &  & |\\
\sigma_{1}\mathcal{U}^{(1)}-\mathcal{U}_{k}^{(1)} & \sigma_{2}\mathcal{U}^{(2)}-\mathcal{U}_{k}^{(2)} & \cdots & \sigma_{r}\mathcal{U}^{(r)}-\mathcal{U}_{k}^{(r)}\\
| & | &  & |
\end{array}\right],\label{eq: matrix u minus uk}
\end{equation}
where tensors are interpreted as vectors in $\mathbb{R}^{N}$ (but
are not necessarily of separation rank one). The error of the tensor
ID can be bounded from above by the error in the matrix ID of $U$.
\begin{thm}
\label{thm:tensor-id-error-estimate}Suppose~ $\mathcal{U}$ is a
rank-$r$ canonical tensor, and let $U$ denote the $N\times r$ matrix
in (\ref{eq:umat}). Further, suppose that a rank-$k$ tensor ID of~
$\mathcal{U}_{k}$ of~ $\mathcal{U}$ is given as in (\ref{eq:ID-coefficients}).
Then we have 
\begin{equation}
\left\Vert \mathcal{U}-\mathcal{U}_{k}\right\Vert _{F}\leq\sqrt{r}\left\Vert U-U_{k}\right\Vert _{F}\leq r\left\Vert U-U_{k}\right\Vert _{2}.\label{eq: ctds u minus uk errors}
\end{equation}

\end{thm}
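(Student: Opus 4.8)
The plan is to reduce the whole statement to two elementary matrix-norm inequalities applied to the single $N\times r$ matrix $U-U_k$. The first step is pure bookkeeping. Identifying each rank-one tensor with its entry vector in $\mathbb{R}^N$, the Frobenius norm of a tensor is exactly the Euclidean norm of that vector, and $\mathcal{U}=\sum_{l=1}^{r}\sigma_l\mathcal{U}^{(l)}$ is precisely the sum of the columns of $U$ in (\ref{eq:umat}). By the construction of the tensor ID in (\ref{eq:u-term-as-vector})--(\ref{eq:ID-coefficients}), the coefficients $\alpha_m$ are chosen so that $\mathcal{U}_k=\sum_{l=1}^{r}\mathcal{U}_k^{(l)}$, i.e.\ $\mathcal{U}_k$ is the sum of the columns of $U_k$ exhibited in (\ref{eq: matrix u minus uk}). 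Hence, writing $\mathbf{1}_r\in\mathbb{R}^r$ for the all-ones vector,
\[
\mathcal{U}-\mathcal{U}_k=(U-U_k)\,\mathbf{1}_r
\]
as vectors in $\mathbb{R}^N$, and therefore $\left\Vert \mathcal{U}-\mathcal{U}_k\right\Vert _{F}=\left\Vert (U-U_k)\mathbf{1}_r\right\Vert _{2}$.

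Next I would prove the first inequality in (\ref{eq: ctds u minus uk errors}). Let $c_1,\dots,c_r\in\mathbb{R}^N$ be the columns of $U-U_k$ (the vectors displayed in (\ref{eq: matrix u minus uk})). Applying the triangle inequality and then the Cauchy--Schwarz inequality to the vectors $(\left\Vert c_1\right\Vert _2,\dots,\left\Vert c_r\right\Vert _2)$ and $\mathbf{1}_r$,
\[
\left\Vert (U-U_k)\mathbf{1}_r\right\Vert _{2}=\Bigl\Vert \sum_{l=1}^{r}c_l\Bigr\Vert _{2}\le\sum_{l=1}^{r}\left\Vert c_l\right\Vert _{2}\le\sqrt{r}\,\Bigl(\sum_{l=1}^{r}\left\Vert c_l\right\Vert _{2}^{2}\Bigr)^{1/2}=\sqrt{r}\,\left\Vert U-U_k\right\Vert _{F},
\]
where the last equality is the definition of the Frobenius norm. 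For the second inequality, $U-U_k$ has $r$ columns, hence rank at most $r$, so at most $r$ of its singular values $\tau_1\ge\tau_2\ge\cdots$ are nonzero; thus $\left\Vert U-U_k\right\Vert _{F}^{2}=\sum_i\tau_i^2\le r\,\tau_1^2=r\,\left\Vert U-U_k\right\Vert _{2}^{2}$. Chaining these two bounds with the identity from the first step gives (\ref{eq: ctds u minus uk errors}).

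There is no genuinely hard step: the two norm estimates are standard, and the only point requiring care is the first paragraph, namely confirming from (\ref{eq:ID-coefficients}) that the reconstruction $\mathcal{U}_k$ is exactly the column-sum of the matrix $U-U_k$, so that the tensor error equals the image of $\mathbf{1}_r$ under that matrix. Once that identification is in place the proof is essentially the displayed two-line computation. (One could also obtain the weaker bound $\left\Vert \mathcal{U}-\mathcal{U}_k\right\Vert _{F}\le\sqrt{r}\left\Vert U-U_k\right\Vert _{2}$ directly via $\left\Vert (U-U_k)\mathbf{1}_r\right\Vert _{2}\le\left\Vert U-U_k\right\Vert _{2}\left\Vert \mathbf{1}_r\right\Vert _{2}$, but routing through the Frobenius norm retains the sharper intermediate quantity stated in the theorem, which is what is later combined with the matrix ID error bounds.)
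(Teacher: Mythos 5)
Your proof is correct and follows essentially the same route as the paper: the paper also deduces the first inequality from the elementary bound $(\sum_{l}a_{l})^{2}\leq r\sum_{l}a_{l}^{2}$ (its Lemma~\ref{lem:tensor matrix frob norm inequality}, applied entrywise to $\mathcal{U}-\mathcal{U}_{k}$), and proves the second inequality by the same trace/singular-value count on the $r\times r$ matrix $(U-U_{k})^{*}(U-U_{k})$. Your packaging via $\mathcal{U}-\mathcal{U}_{k}=(U-U_{k})\mathbf{1}_{r}$ followed by triangle inequality and Cauchy--Schwarz on the column norms is a slightly different (and arguably cleaner) way of organizing the same Cauchy--Schwarz step, but it is not a different argument in substance.
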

For the proof, we need the following 
\begin{lem}
\label{lem:tensor matrix frob norm inequality}For any rank-$r$ canonical
tensor $\mathcal{U}$ and corresponding matrix~ $U$  in (\ref{eq:umat}),
we have
\begin{equation}
\left\Vert \mathcal{U}\right\Vert _{F}\leq\sqrt{r}\left\Vert U\right\Vert _{F}.\label{eq: tensor matrix frob norm inequality}
\end{equation}
\end{lem}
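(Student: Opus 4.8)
The plan is to expand $\left\Vert \mathcal{U}\right\Vert_F^2$ using bilinearity of the Frobenius inner product over the $r$ terms of the CTD and then compare the result with $\left\Vert U\right\Vert_F^2$. Writing $\mathcal{U}=\sum_{l=1}^r \sigma_l \mathcal{U}^{(l)}$ as in (\ref{eq:intro-sumofrankoneterms}), we have
\begin{equation}
\left\Vert \mathcal{U}\right\Vert_F^2 = \left\langle \sum_{l=1}^r \sigma_l \mathcal{U}^{(l)}, \sum_{m=1}^r \sigma_m \mathcal{U}^{(m)}\right\rangle = \sum_{l=1}^r \sum_{m=1}^r \sigma_l \sigma_m \left\langle \mathcal{U}^{(l)}, \mathcal{U}^{(m)}\right\rangle,
\end{equation}
whereas, since the columns of $U$ are exactly the vectors $\sigma_l \mathcal{U}^{(l)}\in\mathbb{R}^N$, its squared Frobenius norm is the sum of the squared column norms, i.e. only the diagonal terms: $\left\Vert U\right\Vert_F^2 = \sum_{l=1}^r \sigma_l^2 \left\Vert \mathcal{U}^{(l)}\right\Vert_F^2 = \sum_{l=1}^r \sigma_l^2 \left\langle \mathcal{U}^{(l)}, \mathcal{U}^{(l)}\right\rangle$. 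So the lemma amounts to bounding a full double sum by $\sqrt{r}$ (squared: $r$) times the diagonal sum.

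The key step is to estimate each cross term $\sigma_l \sigma_m \langle \mathcal{U}^{(l)}, \mathcal{U}^{(m)}\rangle$ by Cauchy--Schwarz, $\left|\langle \mathcal{U}^{(l)}, \mathcal{U}^{(m)}\rangle\right| \le \left\Vert \mathcal{U}^{(l)}\right\Vert_F \left\Vert \mathcal{U}^{(m)}\right\Vert_F$, which gives
\begin{equation}
\left\Vert \mathcal{U}\right\Vert_F^2 \le \sum_{l=1}^r \sum_{m=1}^r \left(\sigma_l \left\Vert \mathcal{U}^{(l)}\right\Vert_F\right)\left(\sigma_m \left\Vert \mathcal{U}^{(m)}\right\Vert_F\right) = \left(\sum_{l=1}^r \sigma_l \left\Vert \mathcal{U}^{(l)}\right\Vert_F\right)^2.
\end{equation}
Now apply the discrete Cauchy--Schwarz inequality to the vector $(\sigma_l \left\Vert \mathcal{U}^{(l)}\right\Vert_F)_{l=1}^r$ against the all-ones vector: $\left(\sum_l \sigma_l \left\Vert \mathcal{U}^{(l)}\right\Vert_F\right)^2 \le r \sum_l \sigma_l^2 \left\Vert \mathcal{U}^{(l)}\right\Vert_F^2 = r \left\Vert U\right\Vert_F^2$. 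Taking square roots yields (\ref{eq: tensor matrix frob norm inequality}). Note that since each $\mathcal{U}^{(l)}=\bigotimes_{j=1}^d \mathbf{u}_j^{(l)}$ is a rank-one tensor built from unit-norm directional vectors, in fact $\left\Vert \mathcal{U}^{(l)}\right\Vert_F = \prod_{j=1}^d \left\Vert \mathbf{u}_j^{(l)}\right\Vert = 1$, so the argument simplifies to $\left\Vert \mathcal{U}\right\Vert_F^2 \le (\sum_l \sigma_l)^2 \le r \sum_l \sigma_l^2 = r\left\Vert U\right\Vert_F^2$; I would present the computation in this cleaner normalized form while keeping the Cauchy--Schwarz estimate on the cross terms explicit.

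There is no real obstacle here: the argument is two applications of Cauchy--Schwarz (once on the Frobenius inner product, once on the $\ell^2$–$\ell^1$ bound over the $r$ indices). The only thing to be careful about is to use the general bound $\left|\langle \mathcal{U}^{(l)}, \mathcal{U}^{(m)}\rangle\right| \le \left\Vert \mathcal{U}^{(l)}\right\Vert_F \left\Vert \mathcal{U}^{(m)}\right\Vert_F$ rather than assuming the rank-one terms are orthogonal — they are generally not — and to track that it is the off-diagonal contribution, potentially as large as the diagonal for every one of the $r^2$ pairs, that forces the factor $r$ (equivalently $\sqrt{r}$ on the norms); this factor is sharp, attained when all terms coincide.
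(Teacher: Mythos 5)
Your proof is correct. It reaches the same bound as the paper but organizes the argument differently: the paper never touches the inner-product structure of the CTD terms; it simply writes both $\left\Vert \mathcal{U}\right\Vert _{F}^{2}$ and $\left\Vert U\right\Vert _{F}^{2}$ entrywise over the dense multi-index $(i_{1},\dots,i_{d})$ and applies the elementary inequality $\left(\sum_{l}a_{l}\right)^{2}\leq r\sum_{l}a_{l}^{2}$ pointwise to the $r$-vector $a_{l}=\sigma_{l}\mathcal{U}_{i_{1}\dots i_{d}}^{(l)}$ at each fixed entry, then sums. You instead stay at the level of the rank-one terms: Cauchy--Schwarz on the Frobenius inner product collapses $\left\Vert \mathcal{U}\right\Vert _{F}^{2}$ to $\left(\sum_{l}\sigma_{l}\left\Vert \mathcal{U}^{(l)}\right\Vert _{F}\right)^{2}$ (this first step is just the triangle inequality for $\left\Vert \cdot\right\Vert _{F}$), and then the same $\ell^{1}$--$\ell^{2}$ inequality on the $r$-vector of weighted column norms finishes it. The two routes use the same elementary inequality on $\mathbb{R}^{r}$, just applied to different vectors; yours has the minor advantages of never unfolding the tensor into its $N$ entries (consistent with the paper's insistence that forming $U$ explicitly is impractical, though the paper's proof is of course only formal) and of making the sharpness of the factor $\sqrt{r}$ transparent (equality when all terms coincide). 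One small point worth making explicit if you write this up: bounding $\sigma_{l}\sigma_{m}\langle\mathcal{U}^{(l)},\mathcal{U}^{(m)}\rangle$ by $\sigma_{l}\sigma_{m}\left\Vert \mathcal{U}^{(l)}\right\Vert _{F}\left\Vert \mathcal{U}^{(m)}\right\Vert _{F}$ uses the positivity of the $s$-values $\sigma_{l}$ (which the paper assumes); otherwise replace $\sigma_{l}$ by $\left|\sigma_{l}\right|$ throughout.
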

\begin{proof}
By definition, we have
\[
\left\Vert U\right\Vert _{F}^{2}=\sum_{i_{1}=1}^{M_{1}}\cdots\sum_{i_{d}=1}^{M_{d}}\sum_{l=1}^{r}\left(\sigma_{l}\mathcal{U}_{i_{1}\dots i_{d}}^{(l)}\right)^{2}.
\]
Since any real numbers $\{a_{l}\}_{l=1}^{r}$ satisfy $\left(\sum_{l=1}^{r}a_{l}\right)^{2}\leq r\sum_{l=1}^{r}a_{l}^{2}$,
it follows that 
\begin{eqnarray*}
\left\Vert \mathcal{U}\right\Vert _{F}^{2} & = & \sum_{i_{1}=1}^{M_{1}}\cdots\sum_{i_{d}=1}^{M_{d}}\mathcal{U}_{i_{1}\dots i_{d}}^{2}\,\,\,=\,\,\,\sum_{i_{1}=1}^{M_{1}}\cdots\sum_{i_{d}=1}^{M_{d}}\left(\sum_{l=1}^{r}\sigma_{l}\mathcal{U}_{i_{1}\dots i_{d}}^{(l)}\right)^{2}\\
 & \le & r\sum_{i_{1}=1}^{M_{1}}\cdots\sum_{i_{d}=1}^{M_{d}}\sum_{l=1}^{r}\left(\sigma_{l}\mathcal{U}_{i_{1}\dots i_{d}}^{(l)}\right)^{2}\,\,\,=\,\,\, r\left\Vert U\right\Vert _{F}^{2}.
\end{eqnarray*}

\end{proof}
To prove Theorem~\ref{thm:tensor-id-error-estimate}, we observe
that the first inequality follows directly from Lemma~\ref{lem:tensor matrix frob norm inequality}.
Letting $\tau_{1}\ge\tau_{2}\ge\dots\ge\tau_{r}\ge0$ denote the eigenvalues
of the $r\times r$ matrix $(U-U_{k})^{*}(U-U_{k})$, we have
\[
\left\Vert U-U_{k}\right\Vert _{F}^{2}=\mbox{tr}\left(U-U_{k}\right){}^{*}\left(U-U_{k}\right)\,\,=\,\,\sum_{j=1}^{r}\tau_{j}\,\,\leq\,\, r\tau_{1}\,\,=\,\, r\left\Vert U-U_{k}\right\Vert _{2}^{2}.
\]

For computing the tensor ID, instead of the $N\times r$ matrix $U$
we can use the $r\times r$ Gram matrix. The next theorem relates
the error in the tensor ID of $\mathcal{U}$ computed via its Gram
matrix to that of the matrix ID of the Gram matrix itself. 
\begin{thm}
\label{thm:gram-id-error}Suppose $G$ is the $r\times r$ Gram matrix
defined in (\ref{eq:intro-gmat}). Let a symmetric interpolative decomposition
of $G$ be given by 
\begin{equation}
G_{k}=P^{*}G_{s}P,\label{eq: symmetric id of gram matrix}
\end{equation}
where $\left\Vert G-G_{k}\right\Vert _{2}\leq\epsilon_{k}$, $G_{s}$
is a $k\times k$ sub-matrix of $G$, and $P$ is a $k\times r$ coefficient
matrix. Then the corresponding tensor ID~ $\mathcal{U}_{k}$ of $\mathcal{U}$
satisfies 
\begin{equation}
\left\Vert \mathcal{U}-\mathcal{U}_{k}\right\Vert _{F}\leq r\sqrt{\epsilon_{k}}.\label{eq: gram id error bound}
\end{equation}
\end{thm}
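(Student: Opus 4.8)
The plan is to connect the tensor ID error to the matrix ID error established via the Gram matrix, using the two previously developed ingredients: Theorem~\ref{thm:tensor-id-error-estimate} (which bounds $\|\mathcal{U}-\mathcal{U}_k\|_F$ by $r\|U-U_k\|_2$) and the relationship between a symmetric ID of $G$ and the corresponding column ID of $U$. The key observation is that, just as in Theorem~\ref{thm:gram-id}, the symmetric decomposition $G_k = P^* G_s P$ with $G = U^*U$ corresponds exactly to the column ID $U_k = U_c P$ of the matrix $U$ in (\ref{eq:umat}), where $U_c$ is the column skeleton indexed by the same $k$ columns that define the $k\times k$ submatrix $G_s = U_c^* U_c$. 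Indeed, $(U-U_k)^*(U-U_k) = U^*U - U^*U_k - U_k^*U + U_k^*U_k$, and since $U_k = U_c P$ is an interpolative (column-subset) approximation, the cross terms collapse: one checks that $(U - U_c P)^*(U - U_c P) = G - P^* G_s P = G - G_k$ by direct expansion, using $U_k^* U = P^* U_c^* U$, $U^* U_k = U^* U_c P$, and the structure of $P$ (some subset of columns of $P$ is the identity, so $U_c = U_k$ restricted to skeleton columns). This gives the identity $\|U - U_k\|_2^2 = \|(U-U_k)^*(U-U_k)\|_2 = \|G - G_k\|_2 \le \epsilon_k$.

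First I would carefully verify the algebraic identity $(U - U_k)^*(U - U_k) = G - G_k$. Writing $E = U - U_c P$, we have $E^* E = U^*U - P^* U_c^* U - U^* U_c P + P^* U_c^* U_c P$. The middle terms are the obstacle: we need $U_c^* U = U_c^* U_c P$ restricted appropriately, i.e., we need the column ID to be constructed so that $U_c P$ agrees with $U$ on the skeleton columns (Property~5-type behavior on the chosen subset) and so that the Gram-based decomposition is genuinely the one induced by applying the matrix ID algorithm to $U$. This is exactly the content of the discussion around Theorem~\ref{thm:gram-id}: the symmetric ID of $B = A^*A$ is obtained by running the column-ID machinery on $A$, and the resulting $P$ and skeleton set satisfy $B - P^* B_s P = (A - A_c P)^*(A - A_c P)$. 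Quoting or mildly adapting that relation (which is proved in the Online Supplement for Theorem~\ref{thm:gram-id}) to the present $N \times r$ matrix $U$ is the cleanest route; it is legitimate because Theorem~\ref{thm:gram-id} is stated for a general $m \times n$ matrix $A$ with $m \gg n$, and $U$ is precisely such a matrix with $m = N$, $n = r$.

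Once the identity $\|U - U_k\|_2^2 = \|G - G_k\|_2 \le \epsilon_k$ is in hand, the conclusion is immediate: by Theorem~\ref{thm:tensor-id-error-estimate},
\[
\left\Vert \mathcal{U}-\mathcal{U}_{k}\right\Vert _{F}\leq r\left\Vert U-U_{k}\right\Vert _{2}=r\left\Vert G-G_{k}\right\Vert _{2}^{1/2}\leq r\sqrt{\epsilon_{k}},
\]
which is (\ref{eq: gram id error bound}). One should also check that the tensor $\mathcal{U}_k$ produced from the symmetric ID of $G$ — i.e., with coefficients as in (\ref{eq:ID-coefficients}) using the $P$ from (\ref{eq: symmetric id of gram matrix}) — is the same object as the $U_k$ appearing in the matrix identity; this is a matter of unwinding definitions, since $U_k$ in (\ref{eq: matrix u minus uk}) is built column-by-column from exactly those coefficients.

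I expect the main obstacle to be the algebraic verification that the Gram-matrix symmetric ID induces a column ID of $U$ satisfying $(U-U_k)^*(U-U_k) = G - G_k$, and in particular making sure the cross terms vanish — this requires that the skeleton columns and the matrix $P$ are the specific ones output by the ID algorithm applied to $U$ (equivalently, as reconstructed from $G$), not an arbitrary pair with $G_k = P^* G_s P$. This is precisely why the theorem references Theorem~\ref{thm:gram-id}; the rest — Lemma~\ref{lem:tensor matrix frob norm inequality}, the trace bound $\|M\|_F^2 \le r\|M\|_2^2$, and the final chain of inequalities — is routine.
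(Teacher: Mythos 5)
Your proposal is correct and follows the same route as the paper: apply Theorem~\ref{thm:gram-id} with $A=U$, $B=G$ to obtain $\left\Vert U-U_{k}\right\Vert _{2}=\left\Vert G-G_{k}\right\Vert _{2}^{1/2}\le\sqrt{\epsilon_{k}}$, then invoke Theorem~\ref{thm:tensor-id-error-estimate}. The algebraic identity $(U-U_k)^*(U-U_k)=G-G_k$ that you flag as the main obstacle is precisely what is established in the Online Supplement (Corollary~\ref{cor:P-projector} plus Lemma~\ref{lem:gram-id-error}), via the observation that $P$ solves the normal equations and hence $U_k=U_cP$ is an orthogonal projection of $U$, so your worry about the cross terms is well placed and resolved exactly there.
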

\begin{proof}
The proof is a straightforward consequence of Theorem~\ref{thm:gram-id}
and is obtained by setting $A=U$ and $B=G$, and using Theorem~\ref{thm:tensor-id-error-estimate}. \end{proof}
\begin{rem}
The matrix $P$ appearing in Theorem~\ref{thm:gram-id-error} coincides
with that of $P_{r}$ in (\ref{eq:full-skeleton-matrix-id}). In other
words, computing the symmetric ID of $G$ requires computing the full
skeletonization. We refer the reader to the proof in the Online Supplement
for additional details.
\end{rem}

\subsection{Projection algorithm with random tensors in canonical form\label{sub:A-method-based-on-canonical-random-tensors}}

\subsubsection{Random tensors in canonical form\label{sub:Random-tensors-in-canonical form}}

In order to form the matrix $Y$ in (\ref{eq:intro-ymat-rproj}),
we generate random rank-one tensors in canonical form, 
\begin{equation}
\mathcal{R}=\bigotimes_{j=1}^{d}\mathbf{r}_{j}=\prod_{j=1}^{d}r_{i_{j}},\,\,\,\,\,\,\, i_{j}=1,\dots,M_{j}.\label{eq:random-tensor}
\end{equation}
If the entries of $\mathbf{r}_{j}$ are realizations of independent
random variables with zero mean and unit variance, then each entry
of $\mathcal{R}$ has zero mean and unit variance as well: 
\begin{equation}
\mathbb{E}\left[\mathcal{R}_{i_{1}\dots i_{d}}\right]=\mathbb{E}\left[\prod_{j=1}^{d}r_{i_{j}}\right]=\prod_{j=1}^{d}\mathbb{E}\left[r_{i_{j}}\right]=0,\label{eq: expected value of random tensor element}
\end{equation}
and 
\begin{equation}
\mathbb{E}\left[\left(\mathcal{R}_{i_{1}\dots i_{d}}-\mathbb{E}\left[\mathcal{R}_{i_{1}\dots i_{d}}\right]\right)^{2}\right]=\mathbb{E}\left[\mathcal{R}_{i_{1}\dots i_{d}}^{2}\right]=\prod_{j=1}^{d}\mathbb{E}\left[\left(r_{i_{j}}\right)^{2}\right]=1.\label{eq: variance of random tensor element}
\end{equation}
In our numerical experiments, we consider several distributions for
the entries of the vectors $\mathbf{r}_{j}$, specifically,
\begin{itemize}
\item Normal
\item Uniform
\item Bernoulli 
\item $1/d$ power distribution, i.e., $r_{i_{j}}=\mbox{sign}\left(\tilde{r}_{i_{j}}\right)\times\left|\tilde{r}_{i_{j}}\right|^{1/d}$,
where, e.g., $\tilde{r}_{i_{j}}\sim N(0,1)$
\end{itemize}
Algorithm~\ref{alg:Tensor-ID-via-random-projections} below shows
the steps for constructing the tensor ID via random projections. As
before, $\ell$ is an integer slightly larger than the expected separation
rank of the approximation, $k$.

\begin{algorithm}[H]
\caption{Tensor ID via Random Projection \label{alg:Tensor-ID-via-random-projections}}

\begin{raggedright}
Input: A rank-$r$ CTD $\mathcal{U}$ and anticipated separation rank
$\ell>k$.
\par\end{raggedright}

\begin{raggedright}
Output: A rank-$k$ tensor ID, $\mathcal{U}_{k}$.
\par\end{raggedright}
\begin{enumerate}
\item Form the $\ell$ random CTDs $\mathcal{R}^{(l)}$ as in (\ref{eq:random-tensor}).
\item Form the $\ell\times r$ projection matrix $Y$ as in (\ref{eq:intro-ymat-rproj}).
\item Compute the rank-$k$ matrix ID of $Y$ via Algorithm~\ref{alg:Randomized-matrix-ID}.
\item Form the rank-$k$ tensor ID $\mathcal{U}_{k}$ via (\ref{eq:ID-coefficients}).
\end{enumerate}
\end{algorithm}

\subsubsection{Cost estimate}

We estimate the cost of the random projection method using $\ell$
random tensors assuming that $M_{1}=\cdots=M_{d}=M$. As we did in
Section~\ref{sub:proto-algorithm}, let $t_{R}$ denote the cost
of generating a single random number used to construct $\mathcal{R}^{(l)}$,
$l=1,\dots\ell$. Forming the collection of $\ell$ random tensors
costs $\mathcal{O}(d\cdot\ell\cdot t_{R}\cdot M)$. Computing each
element of matrix $[Y]_{lm}=\langle\mathcal{R}^{(l)},\sigma_{m}\mathcal{U}^{(m)}\rangle$
requires $\mathcal{O}(d\cdot M)$ operations so that formation of
$Y$ takes $\mathcal{O}(d\cdot\ell\cdot r\cdot M)$ operations. The
cost of obtaining the matrix ID of $Y$ is given in Section~\ref{sub:A-randomized-algorithm-for-matrix-id}.
Finally, constructing the rank-$k$ tensor ID by collecting the $k$
rank-one terms of $\mathcal{U}$ requires $\mathcal{O}(d\cdot k\cdot M)$
operations, and computing the new $s$-values, an additional $\mathcal{O}(k\cdot r)$
operations. Hence, the total cost is on the order of
\begin{equation}
\begin{array}{ccccccc}
d\cdot\ell\cdot M\cdot t_{R} & + & d\cdot\ell\cdot r\cdot M & + & k\cdot\ell\cdot r & + & k\cdot(d\cdot M+r).\\
\mbox{Form tensors\,}\mathcal{R}^{(l)} &  & \mbox{Form}\, Y &  & \mbox{ID of}\, Y &  & \mbox{Form}\,\,\mathcal{U}_{k}
\end{array}\label{eq:computational-cost-rproj}
\end{equation}

\subsubsection{Comparison of computational costs\label{sub:Comparison-of-computational-costs}}

For comparison of the computational complexity of the algorithms,
let us consider the case where the nominal (original) separation rank
$r\sim\mathcal{O}(k^{2})$ is reducible to $\mathcal{O}(k)$ via tensor
ID and $M\gg r$ is the same in all directions. This situation commonly
occurs when, e.g., two functions are multiplied together, or when
an operator is applied to a function represented in separated form.
The situation where $M\gg r$ is typical when the component vectors
$\mathbf{u}_{j}^{(l)}$ are two- or three-dimensional. For simplicity,
let us ignore the cost of generating a random numbers since this cost
is not significant.

Before comparing the algorithms, we remark on the roles that Q-factorization
and tensor ID play in reducing the size of relevant computational
parameters. After performing Q-factorization on a given tensor, the
size $M$ is reduced to at most $r$. Therefore, in all of the estimates
below we replace $M$ by $k^{2}$. 

Combining these assumptions with the estimates (\ref{eq:computational-cost-rproj}),
we arrive at computational complexities shown in Table~\ref{tab:Computational-complexity-estimat}.
The tensor ID approach is faster than ALS by a remarkable factor $k\cdot(number\,\, of\,\, iterations)$,
where the number of iterations in ALS method is quite large.

\begin{table}[H]
\begin{centering}
\begin{tabular}{|l|l|}
\hline 
\textbf{Reduction method} & \textbf{Computational complexity}\tabularnewline
\hline 
ALS & $\left(d\cdot k^{4}\cdot M\right)\cdot(number\,\, of\,\, iterations)\sim$\tabularnewline
 & $\left(d\cdot k^{6}\right)\cdot(number\,\, of\,\, iterations)$\tabularnewline
\hline 
Tensor ID: random projection & $d\cdot k^{3}\cdot M+k^{4}\sim d\cdot k^{5}$\tabularnewline
\hline 
\multicolumn{1}{l}{} & \multicolumn{1}{l}{}\tabularnewline
\end{tabular}
\par\end{centering}

\caption{Estimates of computational complexity. \label{tab:Computational-complexity-estimat}}

\end{table}

\section{Frobenius norm and $s$-norm of a tensor\label{sec:Frobenius-norm-and-s-norm}}

Since we have no way to add/subtract two tensors in CTD form directly,
computing the Frobenius norm of the difference of two tensors via
\begin{equation}
\left\Vert \mathcal{U}-\mathcal{V}\right\Vert _{F}=\left(\left\Vert \mathcal{U}\right\Vert _{F}^{2}-2\langle\mathcal{U},\mathcal{V}\rangle+\left\Vert \mathcal{V}\right\Vert _{F}^{2}\right)^{1/2}\label{eq:FrobDiff-loss-digits}
\end{equation}
results in the loss of significant digits if the two tensors are close.
As an alternative to the Frobenius norm, we use the spectral, or \emph{$s$-norm,}
of a tensor by computing its rank-one approximation. Rank-one approximations
of a tensor are well understood (see, e.g., \cite{LA-MO-VA:2000,ZHA-GOL:2001,SIL-LIM:2008,HIL-LIM:2009}).
We use the fact that the largest $s$-value of the rank-one approximation
has all the necessary properties to be used as a norm (see Lemma below). 

The rank-one approximation of tensor $\mathcal{U}$ can be found by
solving the system of multi-linear equations \cite{LA-MO-VA:2000,ZHA-GOL:2001,LIM:2005,HIL-LIM:2009},
\begin{equation}
\sigma\mathbf{x}_{j'}=\sum_{l=1}^{r}\sigma_{l}\prod_{j=1,j\neq j'}^{d}\langle\mathbf{u}_{j}^{(l)},\mathbf{x}_{j}\rangle,\,\,\,\, j'=1,2,\dots,d,\label{eq:rank-one-fixed-point}
\end{equation}
where $\left\Vert {\bf x}_{1}\right\Vert _{F}=\left\Vert {\bf x}_{2}\right\Vert _{F}=\cdots=\left\Vert {\bf x}_{d}\right\Vert _{F}=1$.
The solution of this system maximizes 
\begin{equation}
\sigma=\sum_{l=1}^{r}\sigma_{l}\prod_{j=1}^{d}\langle\mathbf{u}_{j}^{(l)},\mathbf{x}_{j}\rangle,\label{eq:rank-one-fixed-point-svalue}
\end{equation}
so that $\sigma$ coincides with $s$-norm $\left\Vert \mathcal{U}\right\Vert _{s}$
if the global maximum is attained in solving (\ref{eq:rank-one-fixed-point}).
\begin{lem}
\label{thm:s-norm}Suppose $\mathcal{U}=\sum_{l=1}^{r}\sigma_{l}\bigotimes_{j=1}^{d}\mathbf{u}_{j}^{(l)}$
is a rank-$r$ CTD. Consider
\begin{equation}
\left\Vert \mathcal{U}\right\Vert _{s}=\sup_{\left\Vert \hat{\mathbf{x}}_{j}\right\Vert _{F}=1,\,\, j=1,\dots,d}\left(\sum_{l=1}^{r}\sigma_{l}\prod_{j=1}^{d}\langle\mathbf{u}_{j}^{(l)},\mathbf{\widehat{x}}_{j}\rangle\right),\label{eq:lemma-s-norm}
\end{equation}
where $\hat{\mathcal{X}}=\otimes_{j=1}^{d}\hat{\mathbf{x}}_{j}$ is
the best rank-one approximation of $\mathcal{U}$. Then $\left\Vert \mathcal{U}\right\Vert _{s}$
satisfies the properties of a norm,\end{lem}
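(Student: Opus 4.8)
The plan is to recognize the quantity in (\ref{eq:lemma-s-norm}) as the supremum of the \emph{linear} functional $\hat{\mathcal{X}}\mapsto\langle\mathcal{U},\hat{\mathcal{X}}\rangle$ over the fixed set of rank-one unit tensors, and then to read the three norm axioms directly off that representation. Writing $\hat{\mathcal{X}}=\bigotimes_{j=1}^{d}\hat{\mathbf{x}}_{j}$ with $\|\hat{\mathbf{x}}_{j}\|_{F}=1$, identity (\ref{eq:intro-frobenius-inner-product-for-CTD}) gives $\sum_{l=1}^{r}\sigma_{l}\prod_{j=1}^{d}\langle\mathbf{u}_{j}^{(l)},\hat{\mathbf{x}}_{j}\rangle=\langle\mathcal{U},\hat{\mathcal{X}}\rangle$, and the same identity gives $\|\hat{\mathcal{X}}\|_{F}^{2}=\prod_{j=1}^{d}\|\hat{\mathbf{x}}_{j}\|_{F}^{2}=1$. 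Hence $\|\mathcal{U}\|_{s}=\sup_{\hat{\mathcal{X}}\in\mathcal{S}}\langle\mathcal{U},\hat{\mathcal{X}}\rangle$, where $\mathcal{S}$ is the set of all rank-one tensors of unit Frobenius norm. This expression makes sense for an arbitrary tensor, so there is no issue in applying it below to $c\mathcal{U}$ or to a sum of two CTDs (which is again a CTD, after absorbing a sign into one directional factor if necessary).

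Two elementary properties of $\mathcal{S}$ do all the work. First, $\mathcal{S}$ is compact, being the image of the compact product of $d$ unit spheres under the continuous multilinear Kronecker-product map; therefore the supremum defining $\|\mathcal{U}\|_{s}$ is attained, which is exactly what makes the reference to a ``best rank-one approximation'' $\hat{\mathcal{X}}$ in the statement legitimate. Second, $\mathcal{S}$ is symmetric: negating $\hat{\mathbf{x}}_{1}$ sends $\hat{\mathcal{X}}$ to $-\hat{\mathcal{X}}$, so $\hat{\mathcal{X}}\in\mathcal{S}\Rightarrow-\hat{\mathcal{X}}\in\mathcal{S}$. Since $\langle\mathcal{U},-\hat{\mathcal{X}}\rangle=-\langle\mathcal{U},\hat{\mathcal{X}}\rangle$, the set of values $\{\langle\mathcal{U},\hat{\mathcal{X}}\rangle:\hat{\mathcal{X}}\in\mathcal{S}\}$ is symmetric about $0$, so $\|\mathcal{U}\|_{s}=\max_{\hat{\mathcal{X}}\in\mathcal{S}}\langle\mathcal{U},\hat{\mathcal{X}}\rangle=\max_{\hat{\mathcal{X}}\in\mathcal{S}}|\langle\mathcal{U},\hat{\mathcal{X}}\rangle|\ge0$. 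This already yields nonnegativity and is the convenient form for the remaining axioms.

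From here the axioms follow at once. For definiteness, if $\|\mathcal{U}\|_{s}=0$ then $\langle\mathcal{U},\hat{\mathcal{X}}\rangle=0$ for every $\hat{\mathcal{X}}\in\mathcal{S}$, hence (by scaling) for every rank-one tensor, in particular for each $\mathbf{e}_{i_{1}}\otimes\cdots\otimes\mathbf{e}_{i_{d}}$, which forces every entry $\mathcal{U}_{i_{1}\dots i_{d}}$ to vanish, i.e. $\mathcal{U}=0$; the converse is trivial. Absolute homogeneity follows from linearity of $\langle\cdot,\cdot\rangle$ in its first argument: $\|c\mathcal{U}\|_{s}=\max_{\hat{\mathcal{X}}\in\mathcal{S}}|\langle c\mathcal{U},\hat{\mathcal{X}}\rangle|=|c|\max_{\hat{\mathcal{X}}\in\mathcal{S}}|\langle\mathcal{U},\hat{\mathcal{X}}\rangle|=|c|\,\|\mathcal{U}\|_{s}$. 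The triangle inequality is subadditivity of the supremum: for CTDs $\mathcal{U},\mathcal{V}$,
\[
\|\mathcal{U}+\mathcal{V}\|_{s}=\max_{\hat{\mathcal{X}}\in\mathcal{S}}\bigl|\langle\mathcal{U},\hat{\mathcal{X}}\rangle+\langle\mathcal{V},\hat{\mathcal{X}}\rangle\bigr|\le\max_{\hat{\mathcal{X}}\in\mathcal{S}}|\langle\mathcal{U},\hat{\mathcal{X}}\rangle|+\max_{\hat{\mathcal{X}}\in\mathcal{S}}|\langle\mathcal{V},\hat{\mathcal{X}}\rangle|=\|\mathcal{U}\|_{s}+\|\mathcal{V}\|_{s}.
\]

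The computation is entirely routine; the only point requiring a moment of care is the reduction of the sign-free supremum in (\ref{eq:lemma-s-norm}) to a supremum of absolute values, which is exactly where the symmetry of $\mathcal{S}$ enters, together with the compactness remark needed to guarantee that a maximizing rank-one tensor $\hat{\mathcal{X}}$ actually exists.
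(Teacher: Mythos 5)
Your proof is correct. The variational reformulation $\left\Vert \mathcal{U}\right\Vert _{s}=\sup_{\hat{\mathcal{X}}}\langle\mathcal{U},\hat{\mathcal{X}}\rangle$, the sign-flip argument for homogeneity, and the subadditivity-of-suprema argument for the triangle inequality all coincide with what the paper does. The genuine divergence is in the definiteness axiom: the paper stays inside the CTD structure, showing that if $\langle\mathcal{U},\mathcal{U}^{(l)}\rangle\leq0$ for every rank-one term of $\mathcal{U}$ itself then $\left\Vert \mathcal{U}\right\Vert _{F}^{2}=\sum_{l}\sigma_{l}\langle\mathcal{U},\mathcal{U}^{(l)}\rangle\leq0$, so some term of the given decomposition already witnesses positivity of the supremum. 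You instead test $\mathcal{U}$ against the elementary rank-one tensors $\mathbf{e}_{i_{1}}\otimes\cdots\otimes\mathbf{e}_{i_{d}}$ and conclude entrywise that $\mathcal{U}=0$; this is more elementary, does not depend on the particular CTD representation of $\mathcal{U}$, and extends verbatim to arbitrary tensors. You also supply two points the paper leaves implicit: the compactness of the set of unit rank-one tensors, which is what legitimizes speaking of a \emph{best} rank-one approximation, and the observation that symmetry of that set under negation lets one replace the signed supremum by a supremum of absolute values, giving nonnegativity for free. Both routes are sound; the paper's definiteness argument has the minor virtue of exhibiting an explicit certificate from the data at hand, while yours is the cleaner and more general one.
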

\begin{enumerate}
\item $\left\Vert \alpha\mathcal{U}\right\Vert _{s}=\left|\alpha\right|\,\left\Vert \mathcal{U}\right\Vert _{s}$
for any $\alpha\in\mathbb{R}$
\item $\left\Vert \mathcal{U}\right\Vert _{s}=0$ if and only if $\mathcal{U}=0$
\item $\left\Vert \mathcal{U}+\mathcal{V}\right\Vert _{s}\leq\left\Vert \mathcal{U}\right\Vert _{s}+\left\Vert \mathcal{V}\right\Vert _{s}$\end{enumerate}
\begin{proof}
First, we observe that for a rank-one tensor $\mathcal{X}=\sigma\hat{\mathcal{X}}$
satisfying (\ref{eq:rank-one-fixed-point}) and (\ref{eq:rank-one-fixed-point-svalue}),
\begin{eqnarray}
\left\Vert \mathcal{U}-\mathcal{X}\right\Vert _{F}^{2} & = & \left\Vert \mathcal{U}\right\Vert _{F}^{2}-2\langle\mathcal{U},\mathcal{X}\rangle+\left\Vert \mathcal{X}\right\Vert _{F}^{2}\nonumber \\
 & = & \left\Vert \mathcal{U}\right\Vert _{F}^{2}-\sigma^{2}.\label{eq: rank one variational error}
\end{eqnarray}
Thus, the best rank-one approximation with unit norm, $\hat{\mathcal{X}}$,
maximizes the quantity $\langle\mathcal{U},\hat{\mathcal{X}}\rangle$. 

If $\alpha\geq0$, then (1) is obvious. If $\alpha<0$, then $\langle\mathcal{U},\hat{\mathcal{X}}\rangle=-\alpha\sigma$
and changing the sign of any single $\hat{\mathbf{x}}_{j}$, $j=1,\dots,d$,
we obtain (1). For any $\mathcal{U}\neq0$, there exists at least
one rank-one tensor $\mathcal{X}$ such that $\left\langle \mathcal{U},\mathcal{X}\right\rangle >0$,
implying that $\left\Vert \mathcal{U}\right\Vert _{s}>0$. In fact,
we can take $\mathcal{X}=\mathcal{U}^{(l')}$, where $\mathcal{U}^{(l')}$
is one of the rank-one terms of $\mathcal{U}$. Indeed, assuming that
$\left\langle \mathcal{U},\mathcal{U}^{(l)}\right\rangle \leq0$ for
all $l=1,\dots,r_{u}$, and writing the Frobenius norm of $\mathcal{U}$
as 
\begin{equation}
\left\Vert \mathcal{U}\right\Vert _{F}^{2}=\sum_{l}\sigma_{l}\langle\mathcal{U},\mathcal{U}^{(l)}\rangle\leq0,\label{eq: frob norm leq 0?}
\end{equation}
we conclude that $\mathcal{U}=0$. Thus, there is at least one rank-one
term of $\mathcal{U}$ such that $\left\langle \mathcal{U},\mathcal{U}^{(l)}\right\rangle >0$.
Finally, the triangle inequality follows since
\begin{eqnarray}
\left\Vert \mathcal{U}+\mathcal{V}\right\Vert _{s} & = & \sup_{\mathcal{X}:\left\Vert \mathcal{X}\right\Vert _{F}=1}\left\langle \mathcal{U}+\mathcal{V},\mathcal{X}\right\rangle \nonumber \\
 & \leq & \sup_{\mathcal{Y}:\left\Vert \mathcal{Y}\right\Vert _{F}=1}\left\langle \mathcal{U},\mathcal{Y}\right\rangle +\sup_{\mathcal{Z}:\left\Vert \mathcal{Z}\right\Vert _{F}=1}\left\langle \mathcal{V},\mathcal{Z}\right\rangle \nonumber \\
 & = & \left\Vert \mathcal{U}\right\Vert _{s}+\left\Vert \mathcal{V}\right\Vert _{s}.\label{eq: s norm triangle ineq}
\end{eqnarray}
where $\mathcal{X}$, $\mathcal{Y}$ and $\mathcal{Z}$ are rank-one
tensors.
\end{proof}
The equations (\ref{eq:rank-one-fixed-point}) are solved by an iteration
equivalent to ALS for approximating via rank-one tensors. However,
in this case, there is no linear system to solve and, thus, the iteration
resembles the power method for matrices. Specifically, for each direction
$j$, the iteration proceeds by updating the left hand side, vector
$\mathbf{x}_{j'}$, by evaluating the right hand side with the currently
available vectors $\mathbf{x}_{j}$. Re-normalizing $\mathbf{x}_{j'}$
to obtain the normalization factor $\sigma$, and sweeping through
the directions, the iteration terminates when the change in $\sigma$
is small. However, unlike the power method for matrices, this iteration
may have more than one stationary point, meaning that the answer may
depend on the initialization. 
\begin{rem}
Although the definition of the\emph{ $s$-norm} parallels the matrix
2-norm and can be useful as a way of estimating errors, computing
the $s$-norm exactly for arbitrary dense tensors is claimed to be
an NP-hard problem in \cite{HIL-LIM:2009}. We note that for symmetric
tensors, the global convergence of the iteration described above has
been claimed in \cite{KOL-MAY:2011}.
\end{rem}
We discuss our approach to initialization below but first consider
the cost of estimating the $s$-norm and its relation to the Frobenius
norm. Let us assume that $M_{j}=M$ for $j=1,\dots,d$, and let $n_{it}$
denote the number of iterations required for the rank one iteration
to converge ($n_{it}$ is usually small). For each direction, $(d-1)\cdot r$
inner products are computed at a cost of $\mathcal{O}(M)$ operations
each. Since only one inner product must be updated at a time, the
total computational cost is estimated as 
\begin{equation}
n_{it}\cdot d\cdot r\cdot M.\label{eq:rank-one-als-cost}
\end{equation}
For comparison, the cost of computing the Frobenius norm via (\ref{eq:intro-frobenius-inner-product-for-CTD})
and (\ref{eq:intro-frob-norm}) is $\mathcal{O}\left(d\cdot r^{2}\cdot M\right)$,
so that estimating the $s$-norm is faster if $n_{it}<r$. Another
advantage of using the $s$-norm is that there is no loss of significant
digits in computing it via (\ref{eq:rank-one-fixed-point}). We have
\begin{lem}
\textup{\label{thm:s-equivalence-1}}The Frobenius and $s$-norms
satisfy\textup{ 
\begin{equation}
\frac{1}{\sum_{l=1}^{r}\sigma_{l}}\left\Vert \mathcal{U}\right\Vert _{F}^{2}\leq\left\Vert \mathcal{U}\right\Vert _{s}\leq\left\Vert U\right\Vert _{F},\label{eq: snorm inequalities}
\end{equation}
where $\mathcal{U}$ is defined in (\ref{eq:CTD}), $U$ in (\ref{eq:umat})
and $\left\Vert U\right\Vert _{F}=\left(\mbox{tr}U^{*}U\right)^{1/2}=\left(\sum_{l=1}^{r}\sigma_{l}^{2}\right)^{1/2}$.}\end{lem}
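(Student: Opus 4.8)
The plan is to prove the two inequalities separately, in each case reducing the $s$-norm to the supremum over admissible rank-one competitors exactly as in the proof of Lemma~\ref{thm:s-norm}. For the left inequality I would first expand the squared Frobenius norm with the CTD inner-product formula (\ref{eq:intro-frobenius-inner-product-for-CTD}),
\[
\left\Vert \mathcal{U}\right\Vert _{F}^{2}=\langle\mathcal{U},\mathcal{U}\rangle=\sum_{l=1}^{r}\sigma_{l}\,\langle\mathcal{U},\mathcal{U}^{(l)}\rangle .
\]
Each rank-one term $\mathcal{U}^{(l)}=\bigotimes_{j=1}^{d}\mathbf{u}_{j}^{(l)}$ is built from component vectors of unit Frobenius norm, hence is an admissible choice of $\hat{\mathcal{X}}=\bigotimes_{j}\hat{\mathbf{x}}_{j}$ in the supremum (\ref{eq:lemma-s-norm}); therefore $\langle\mathcal{U},\mathcal{U}^{(l)}\rangle\le\left\Vert \mathcal{U}\right\Vert _{s}$ for every $l$. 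Since the $\sigma_{l}$ are nonnegative and $\left\Vert \mathcal{U}\right\Vert _{s}\geq0$, the estimate holds term by term, so $\left\Vert \mathcal{U}\right\Vert _{F}^{2}\leq\bigl(\sum_{l}\sigma_{l}\bigr)\left\Vert \mathcal{U}\right\Vert _{s}$, which is the left-hand inequality after dividing by $\sum_{l}\sigma_{l}$ (the case $\mathcal{U}=0$ being trivial).

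For the upper bound, the natural argument is Cauchy--Schwarz in the Frobenius inner product: for any admissible rank-one $\hat{\mathcal{X}}$ with $\left\Vert \hat{\mathcal{X}}\right\Vert _{F}=1$ one has $\langle\mathcal{U},\hat{\mathcal{X}}\rangle\le\left\Vert \mathcal{U}\right\Vert _{F}$, which is just the statement that the best rank-one error $\left\Vert \mathcal{U}\right\Vert _{F}^{2}-\sigma^{2}=\left\Vert \mathcal{U}-\mathcal{X}\right\Vert _{F}^{2}$ in (\ref{eq: rank one variational error}) is nonnegative; taking the supremum over $\hat{\mathcal{X}}$ bounds $\left\Vert \mathcal{U}\right\Vert _{s}$ from above. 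To land precisely on $\left\Vert U\right\Vert _{F}=\bigl(\sum_{l}\sigma_{l}^{2}\bigr)^{1/2}$ I would instead pair $\langle\mathcal{U},\hat{\mathcal{X}}\rangle=\sum_{l}\sigma_{l}\langle\mathcal{U}^{(l)},\hat{\mathcal{X}}\rangle$ as an inner product over the index $l$ of the vector $(\sigma_{l})_{l}$ with $\bigl(\langle\mathcal{U}^{(l)},\hat{\mathcal{X}}\rangle\bigr)_{l}$, apply Cauchy--Schwarz, and control the second factor $\sum_{l}\langle\mathcal{U}^{(l)},\hat{\mathcal{X}}\rangle^{2}=\bigl\|\tilde U^{*}\hat{\mathcal{X}}\bigr\|_{2}^{2}$, where $\tilde U$ is the matrix with the unit-norm columns $\mathcal{U}^{(l)}$.

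The main obstacle is exactly this last step of the upper bound: bounding $\sum_{l}\langle\mathcal{U}^{(l)},\hat{\mathcal{X}}\rangle^{2}$ by $1$ (equivalently $\|\tilde U\|_{2}\le1$) is what is needed to get $\left\Vert U\right\Vert _{F}$ on the right, and this holds when the retained rank-one terms are (near-)orthonormal, as they are after Q-factorization and in the situations where the lemma is used, but not for a completely arbitrary CTD, since aligned terms can push $\left\Vert \mathcal{U}\right\Vert _{s}$ up to $\sum_{l}\sigma_{l}$. I would therefore either make the (near-)orthonormality of the terms explicit in the hypotheses, or fall back on the unconditional chain $\left\Vert \mathcal{U}\right\Vert _{s}\leq\left\Vert \mathcal{U}\right\Vert _{F}\leq\sqrt{r}\,\left\Vert U\right\Vert _{F}$ obtained from the Cauchy--Schwarz bound above together with Lemma~\ref{lem:tensor matrix frob norm inequality}. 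The lower bound, by contrast, is unconditional and essentially immediate from the definitions, so no real difficulty arises there.
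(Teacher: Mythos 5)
Your proof of the left inequality is essentially the paper's: write $\left\Vert \mathcal{U}\right\Vert _{F}^{2}=\sum_{l}\sigma_{l}\langle\mathcal{U},\mathcal{U}^{(l)}\rangle$ and bound each $\langle\mathcal{U},\mathcal{U}^{(l)}\rangle$ by $\left\Vert \mathcal{U}\right\Vert _{s}$, since $\mathcal{U}^{(l)}$ is an admissible unit-norm rank-one competitor in (\ref{eq:lemma-s-norm}) and the $\sigma_{l}$ are positive. No issue there.

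Your hesitation about the right inequality is warranted, and the obstacle you name is in fact present in the paper's own proof. The paper argues $\left\Vert \mathcal{U}\right\Vert _{s}\leq\left\Vert U\right\Vert _{2}\leq\left\Vert U\right\Vert _{F}$ by substituting the optimal rank-one tensor $\hat{\mathcal{X}}$ into $\max_{\left\Vert \mathbf{x}\right\Vert _{2}=1}\left\Vert U^{*}\mathbf{x}\right\Vert _{2}$. But that substitution only yields $\bigl(\sum_{l}\sigma_{l}^{2}\langle\mathcal{U}^{(l)},\hat{\mathcal{X}}\rangle^{2}\bigr)^{1/2}\leq\left\Vert U\right\Vert _{2}$, whereas $\left\Vert \mathcal{U}\right\Vert _{s}=\sum_{l}\sigma_{l}\langle\mathcal{U}^{(l)},\hat{\mathcal{X}}\rangle$ is the \emph{sum} of the entries of $U^{*}\hat{\mathbf{x}}$, not its Euclidean length, and the sum can exceed the $\ell^{2}$ norm by a factor of up to $\sqrt{r}$. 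Your ``aligned terms'' scenario is a genuine counterexample to the bound as stated: if all $r$ rank-one terms coincide, then $\left\Vert \mathcal{U}\right\Vert _{s}=\sum_{l}\sigma_{l}$ while $\left\Vert U\right\Vert _{F}=\bigl(\sum_{l}\sigma_{l}^{2}\bigr)^{1/2}$, so the inequality fails for $r\geq2$. The unconditional chain is $\left\Vert \mathcal{U}\right\Vert _{s}\leq\left\Vert \mathcal{U}\right\Vert _{F}\leq\sqrt{r}\left\Vert U\right\Vert _{F}$ (your fallback, via Cauchy--Schwarz and Lemma~\ref{lem:tensor matrix frob norm inequality}); the sharper bound $\left\Vert U\right\Vert _{F}$ requires (near-)orthogonality of the rank-one terms, as you propose. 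Either of your fixes is the correct repair, and your diagnosis lands exactly on the deficient step of the paper's argument.
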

\begin{proof}
By definition of the matrix $2$-norm, we have
\begin{equation}
\left\Vert U\right\Vert _{2}=\left\Vert U^{*}\right\Vert _{2}=\max_{\left\Vert \mathbf{x}\right\Vert _{2}=1}\left\Vert U^{*}\mathbf{x}\right\Vert _{2}.\label{eq:2-norm-defn-1}
\end{equation}
Here $\mathbf{x}$ is a vector in $\mathbb{R}^{N}$ corresponding
to a dense tensor. Thus, taking $\mathbf{x}$ corresponding to the
rank-one tensor achieving the best approximation to $\mathcal{U}$
and satisfying (\ref{eq:rank-one-fixed-point}) and (\ref{eq:rank-one-fixed-point-svalue}),
we obtain $\left\Vert \mathcal{U}\right\Vert _{s}\leq\left\Vert U\right\Vert _{2}\le\left\Vert U\right\Vert _{F}$.
Starting from $\left\Vert \mathcal{U}\right\Vert _{F}^{2}=\sum_{l=1}^{r}\sigma_{l}\langle\mathcal{U}^{(l)},\mathcal{U}\rangle$
and replacing the rank one terms $\mathcal{U}^{(l)}$ with the best
rank one approximation $\mathcal{X}$ of the tensor $\mathcal{U}$
, we obtain
\begin{equation}
\sum_{l=1}^{r}\sigma_{l}\langle\mathcal{U}^{(l)},\mathcal{U}\rangle\le\sum_{l=1}^{r}\sigma_{l}\langle\mathcal{X},\mathcal{U}\rangle=\left\Vert \mathcal{U}\right\Vert _{s}\sum_{l=1}^{r}\sigma_{l}.\label{eq: lemma proof inequality}
\end{equation}

\end{proof}

\subsection{Approximating the $s$-norm}

The proof of Lemma~\ref{thm:s-norm} assumes that the best rank-one
approximation can be obtained. However, in general, the iteration
in (\ref{eq:rank-one-fixed-point}) may converge to a local maximum
and, thus, the result may depend on the initialization. Therefore,
we need a systematic approach to initializing (\ref{eq:rank-one-fixed-point}).
Several approaches to initialization have been suggested previously
(see e.g., \cite{LA-MO-VA:2000,KOF-REG:2001}). 

We initialize the iteration by using components of the given tensor
$\mathcal{U}$ (rather than a random initialization). In applications
we are interested in, a small number of terms typically dominate the
representation so that it often sufficient to initialize using the
term with the largest $s$-value. Alternatively, we generated the
initial guess using component matrices in each direction by either
averaging their columns or computing the singular vector corresponding
to their largest singular value. Obviously, these are heuristic choices
but they appear to work well in the computational environment we are
interested in. For any of these initialization methods, there is no
guarantee that they provide the globally optimal rank-one approximation
for an arbitrary CTD. 

In the environment of computing the tensor ID, a weaker form of the
definition for the $s$-norm may be appropriate. Instead of demanding
that the s-norm be the global maximum of (\ref{eq:rank-one-fixed-point-svalue}),
we can define it to be the maximally computed $\sigma$ using a specific
initialization method. However, this is only permitted if this definition
of the s-norm satisfies the triangle inequality in Lemma~\ref{thm:s-norm}.
In other words, if we denote $\sigma^{u}$ and $\sigma^{v}$ to be
the maximal computed (not necessarily global) $s$-values for tensors
$\mathcal{U}$ and $\mathcal{V}$, it is necessary that $\sigma^{(u+v)}$
computed using the same initialization method satisfies $\sigma^{u}+\sigma^{v}\leq\sigma^{(u+v)}$.
When computing tensor ID under this definition, we can always verify
the triangle inequality \textit{a posteriori}.

\section{Examples\label{sec:Examples}}

All of the numerical examples were implemented in MATLAB \cite{MATLAB:2012}.
We used the CTD data structure and basic routines available through
the Sandia Tensor Toolbox~2.5 \cite{TENSOR:2012} and implemented
ALS, tensor ID, and all additional routines with no special effort
to optimize or parallelize the codes. All experiments were performed
on a PC laptop with a 2.20 GHz Intel i7 chipset and 8 GB of RAM.

\subsection{Comparison of matrices associated with tensor ID algorithms\label{sub:Example-of-spectral-properties}}

We first illustrate the loss of significant digits in constructing
the tensor ID using the Gram matrix in (\ref{eq:intro-gmat}) by comparing
its numerical rank with that of the matrix generated via the randomized
approach in Section~\ref{sub:A-method-based-on-canonical-random-tensors}.

For the comparison, we construct a tensor $\mathcal{U}$ with terms
that are nearly orthogonal and with $s$-values decaying exponentially
fast. In this case, a tensor ID for a user-selected accuracy $\epsilon$
can be obtained by simply dropping terms with small $s$-values. Therefore,
we can directly compare the number of the skeleton terms simply by
estimating the numerical rank of the associated matrices (without
actually computing new tensor ID coefficients).

Consider a random tensor in dimension $d=20$, with $M_{1}=\cdots=M_{20}=128$
and $r=100$, and generated as
\begin{equation}
\mathcal{U}=\sum_{l=1}^{r}\sigma_{l}\mathcal{U}^{(l)}\,\,\,\,\,\,\,\,\,\,\,\,\,\,\,\,\mbox{\mbox{with}\,\,\,\,\,\,\,\,\,\,\,\,\,\,\,\,}\mathcal{U}^{(l)}=\bigotimes_{j=1}^{d}\mathbf{u}_{j}^{(l)},\,\,\,\,\,\,\,\,\,\,\,\,\,\, u_{i_{j}}^{(l)}\sim N(0,1),\label{eq:example-accuracy-random-tensor}
\end{equation}
where $N(0,1)$ denotes the normal distribution with zero mean and
unit variance and where the vectors $\mathbf{u}_{j}^{(l)}$ are normalized
to have unit Frobenius norm. The $s$-values assigned to the terms
are exponentially decaying,
\begin{equation}
\sigma_{l}=\exp\left(-l/2\right),\,\,\,\,\,\,\,\,\,\,\,\,\,\,\, l=1,\dots,r.\label{eq: example - exp decaying s values}
\end{equation}
By construction, the terms of $\mathcal{U}$ are nearly orthogonal
so that the truncation error incurred by removing small terms is approximately
\begin{equation}
\epsilon_{l'}=\left(\sum_{l>l'}\sigma_{l}^{2}\right)^{1/2}.\label{eq:spectrum-example-s-value-decay}
\end{equation}
Therefore, using $\epsilon_{l'}\leq\epsilon$ , the tensor ID for
accuracy $\epsilon$ should select the first $l'$ terms. In order
for the tensor ID to succeed in choosing these terms, the matrices
for its construction must have numerical rank greater than $l'$.

We compute the Gram matrix $G$ via (\ref{eq:intro-gmat}). For the
random projection method, we generate the tensors $\mathcal{R}^{(l)}$
for $l=1,\dots,r$ and form the $r\times r$ projection matrix $Y$
via (\ref{eq:intro-ymat-rproj}). We then compute the singular values
of the matrices $G$ and $Y$.

The singular values of the matrices $G$ and $Y$ are shown in Figure~\ref{fig:ex1-spectra}.
For reference, we also plot the $s$-values of $\mathcal{U}$. Notice
that for accuracy $\epsilon\approx10^{-16}$ the numerical rank of
$G$ is only $\sim35$ since, as expected, the singular values of
$G=U^{*}U$ decay twice as fast logarithmically as those of matrix
$U$. This implies that the tensor ID using $G$, computed in double
precision with $\approx16$ accurate digits, loses its ability to
distinguish significant terms for requested accuracies smaller than
$\approx10^{-8}$. On the other hand, for accuracies $\ll10^{-8}$,
the numerical rank of $Y$ allows us to select for up to $75-80$
terms. The displayed results do not depend in any significant way
on the choices of distributions in the random projection method described
in Section~\ref{sub:A-method-based-on-canonical-random-tensors}. 

\begin{figure}
\begin{centering}
\includegraphics[scale=0.5]{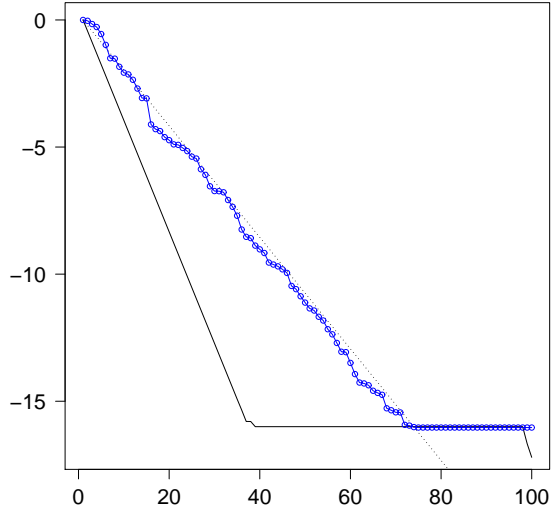}
\par\end{centering}

\caption{\label{fig:ex1-spectra}Decay of singular values of the matrices in
Example~\ref{sub:Example-of-spectral-properties}. The logarithm
($\log_{10}$) of the singular values are displayed as a function
of their index. The singular values of $G$ are plotted with a solid
line. The singular values of $Y$ are displayed using symbol ``$\circ$'',
while the original $s$-values of the tensor $\mathcal{U}$ are displayed
using fine dots and nearly coincide with the singular values of $Y$.}

\end{figure}

Continuing with this example, we impose additional structure on the
terms of $\mathcal{U}$ and choose the last $30$ terms (at random)
from the first $70$ terms and give them the exponentially decaying
weights in (\ref{eq:spectrum-example-s-value-decay}). Since by the
original construction the terms were nearly mutually orthogonal, for
double precision accuracy the algorithms should produce $\sim70$
terms (cf. Figure~\ref{fig:example-basic-reduction-via-tensor-ID}),
i.e., choose all the linearly independent terms of the tensor.

Results for the Gram and randomized tensor ID algorithms are shown
in Figure~\ref{fig:example-basic-reduction-via-tensor-ID}. The left
plot shows the relative $s$-norm error plotted against the separation
rank, $\ell$, of the tensor IDs computed via the matrices $G$ and
$Y$. The right plot shows the separation rank of the tensor ID approximation
as a function of $\ell$. The underlying separation rank of the CTD
is known to be $r\sim70$ for $\epsilon\sim\epsilon_{machine}$, and
the error for the randomized methods levels off when $k$ approaches
this value. The Gram method, however, is only able to identify the
first $\sim35$ terms due to inherent loss of accuracy. 

The Frobenius error for the randomized tensor ID approximately matches
the $s$-norm error until the cutoff of $\sqrt{\epsilon_{machine}}$
is attained, at which point it stays constant with respect to $k$
(not shown). Hence, this example also demonstrates the usefulness
of the $s$-norm when high accuracy is sought.

\begin{figure}
\begin{centering}
\includegraphics[scale=0.4]{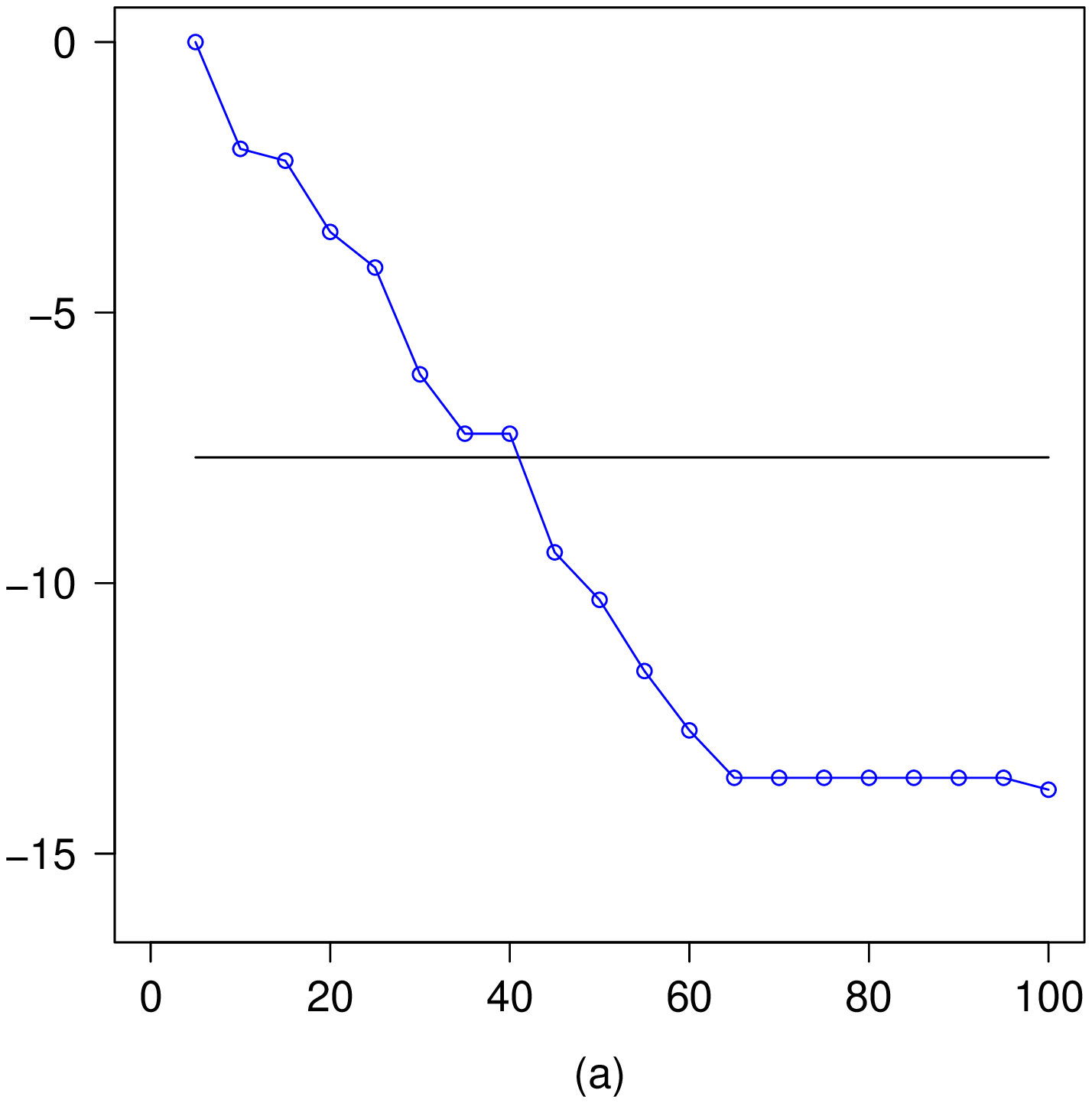}\includegraphics[scale=0.4]{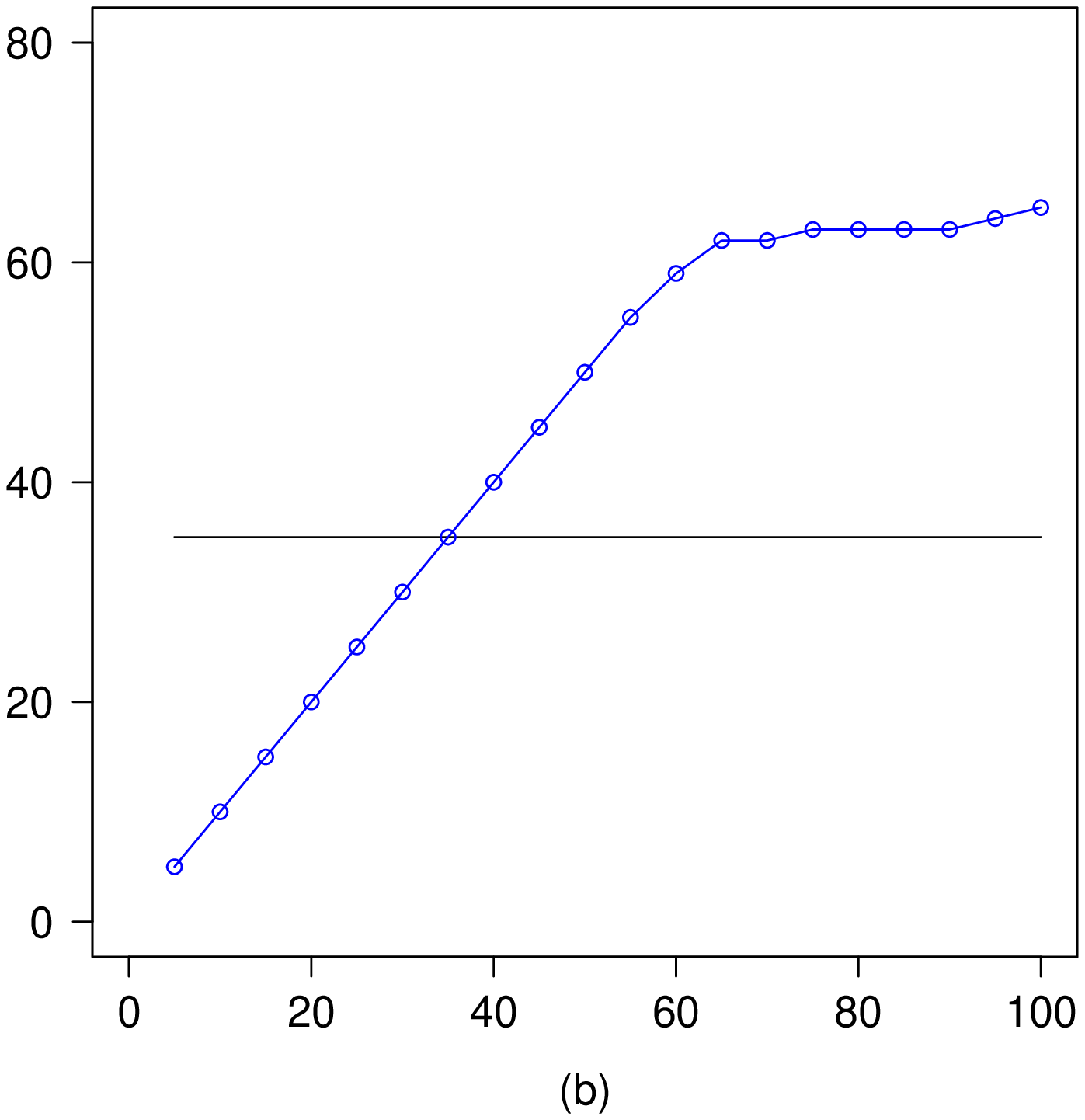}
\par\end{centering}

\caption{\label{fig:example-basic-reduction-via-tensor-ID}(a) Logarithm ($\mbox{\ensuremath{\log}}_{10}$)
of relative accuracy of computing tensor ID using the random projection
method and (b) the resulting separation rank as a function of the
number of projections, $\ell$, for Example~\ref{sub:Example-of-spectral-properties}.
Results for the Gram matrix approach (which do not depend on the parameter
$\ell$) are shown as a horizontal solid line. The errors are computed
using the $s$-norm. }
\end{figure}

\subsection{The tensor ID within convergent, self-correcting Schulz iteration}

We present three examples of using the tensor ID within the SGTI approach
in order to accelerate reduction of separation rank. These examples
were originally presented in \cite{BIAGIO:2012}. We consider the
quadratically convergent, self-correcting Schulz iteration \cite{SCHULZ:1933},
given by
\begin{eqnarray}
\mathbb{X}_{n+1} & = & 2\mathbb{X}_{n}-\mathbb{X}_{n}\mathbb{B}\mathbb{X}_{n},\nonumber \\
\mathbb{X}_{0} & = & \alpha\mathbb{B}^{*},\label{eq: schulz iteration-1}
\end{eqnarray}
where $\alpha$ is chosen so that the initial error $\mathbb{E}_{0}$
satisfies $\left\Vert \mathbb{E}_{0}\right\Vert =\left\Vert \mathbb{I}-\mathbb{X}_{0}\mathbb{B}\right\Vert <1$.
In these examples, the operator $\mathbb{B}$ is a preconditioned
elliptic operator whose inverse corresponds to the Green's function
of a Poisson equation (see more details below). Within each iteration,
we first form the quantity $2\mathbb{I}-\mathbb{BX}_{n}$ which we
then left-multiply by $\mathbb{X}_{n}$ to obtain $\mathbb{X}_{n+1}$.
Both of these operations significantly increase the separation rank
and require a reduction step. 

The reduction step, which would typically be performed using ALS,
is instead performed with the randomized projection tensor ID Algorithm~\ref{alg:Tensor-ID-via-random-projections}.
Only after each complete iteration, when the separation rank has been
reduced as much as possible via the tensor ID, is ALS invoked to further
refine the approximation. In doing so, we avoid using ALS in the usual
manner, i.e., to achieve a certain accuracy of approximation. Instead,
its role is limited to reducing the dynamic range of the $s$-values
(i.e., avoiding near cancellation of terms with large $s$-values)
by running the algorithm for only a fixed (small) number of iterations.
The reduction errors of the tensor ID and several ALS iterations are
then corrected by the next Schulz iteration.

\subsubsection{Inverse operator for the Poisson equation\label{ex:Inverse-operator-for-poisson}}

As the first example, we consider the periodic, constant coefficient
Poisson equation. In this case we know that the Green's function has
an efficient separated representation, see e.g., \cite{B-F-H-K-M:2012},
and we want to demonstrate that we can approximate the Green's function
starting with the differential operators in 
\begin{eqnarray}
-\Delta u(\mathbf{x}) & = & f(\mathbf{x}),\,\,\,\,\,\,\,\,\,\,\,\,\mathbf{x}\in(0,1)^{3},\label{eq:periodic constant coefs}\\
u(0,y,z) & = & u(0,y,z),\nonumber \\
u(x,0,z) & = & u(x,1,z),\nonumber \\
u(x,y,0) & = & u(x,y,1).\nonumber 
\end{eqnarray}
where 
\[
\Delta=\frac{\partial^{2}}{\partial x_{1}^{2}}+\frac{\partial^{2}}{\partial x_{2}^{2}}+\frac{\partial^{2}}{\partial x_{3}^{2}}.
\]
We use eighth-order finite differences to discretize the second derivative
in each direction to obtain the $512\times512$ matrix $A$, leading
to the operator with separation rank $r=3$,
\[
\mathbb{A}=A\otimes I\otimes I+I\otimes A\otimes I+I\otimes I\otimes A
\]
where $I$ denotes the identity matrix. We represent the matrix $A$
in a wavelet basis to ensure that both the operator and its inverse
are sparse \cite{BE-CO-RO:1991,BE-CO-RO:1992,BEYLKI:1992,BEYLKI:1994}.
In a wavelet basis, the second derivative operator has a diagonal
preconditioner, $P$ (see e.g., \cite{BEYLKI:1994}), such that the
condition number of $PAP$ is $\mathcal{O}(1)$. Applying such preconditioner
in dimension $d=3$ results in the well-conditioned operator
\[
\mathbb{B}=\left(PAP\right)\otimes I\otimes I+I\otimes\left(PAP\right)\otimes I+I\otimes I\otimes\left(PAP\right).
\]
Since the problem (\ref{eq:periodic constant coefs}) is periodic,
the operators $\mathbb{A}$ and $\mathbb{B}$ have a one-dimensional
null space spanned by a constant. This necessitates the use of the
one dimensional projector within the Schulz iteration in order to
avoid accumulation of the error in the null space. The results of
this computation are shown in Figure~\ref{fig:-Schulz-iteration-constant coefs}. 

\begin{figure}
\begin{centering}
\includegraphics[scale=0.4]{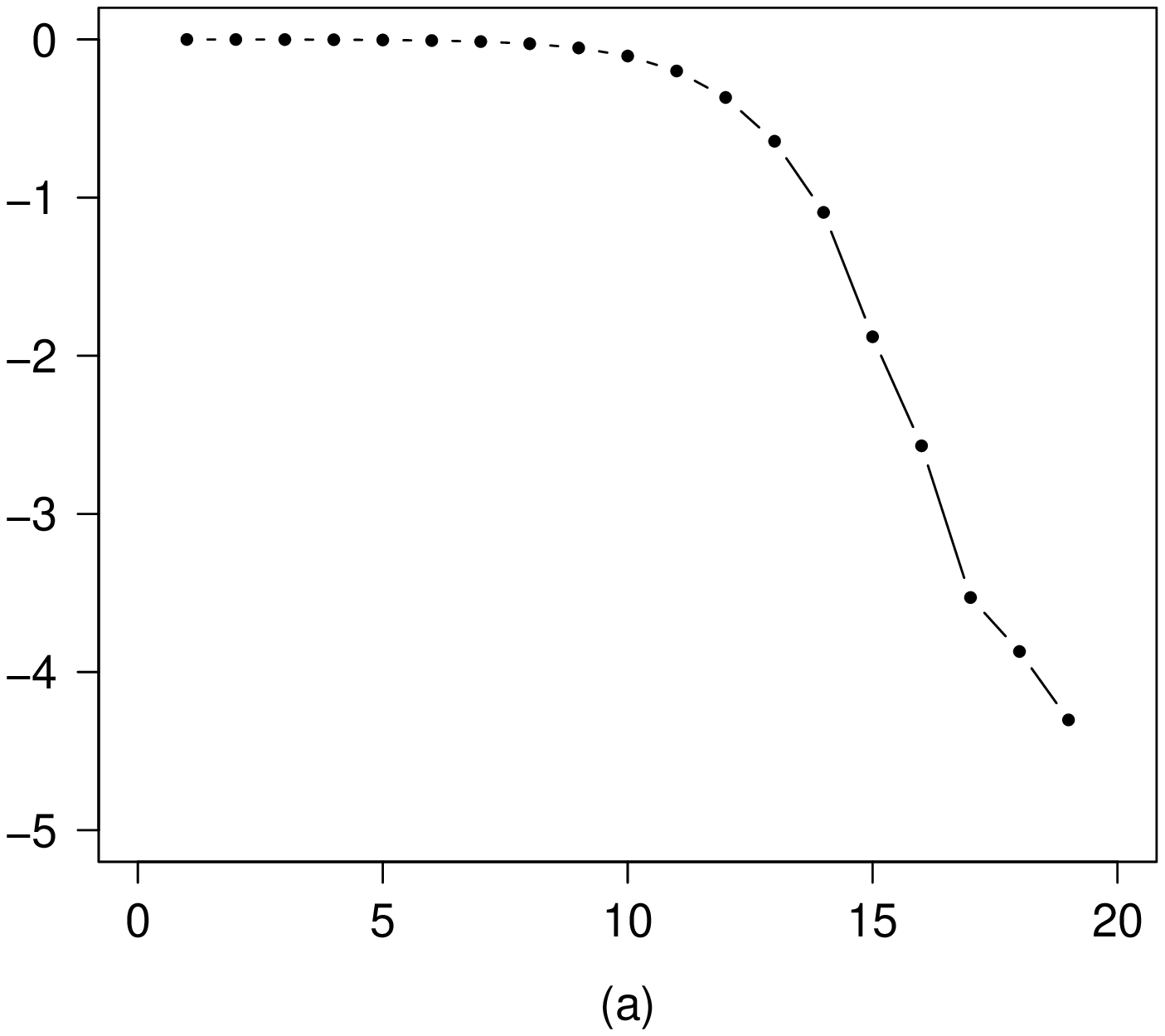}\includegraphics[scale=0.4]{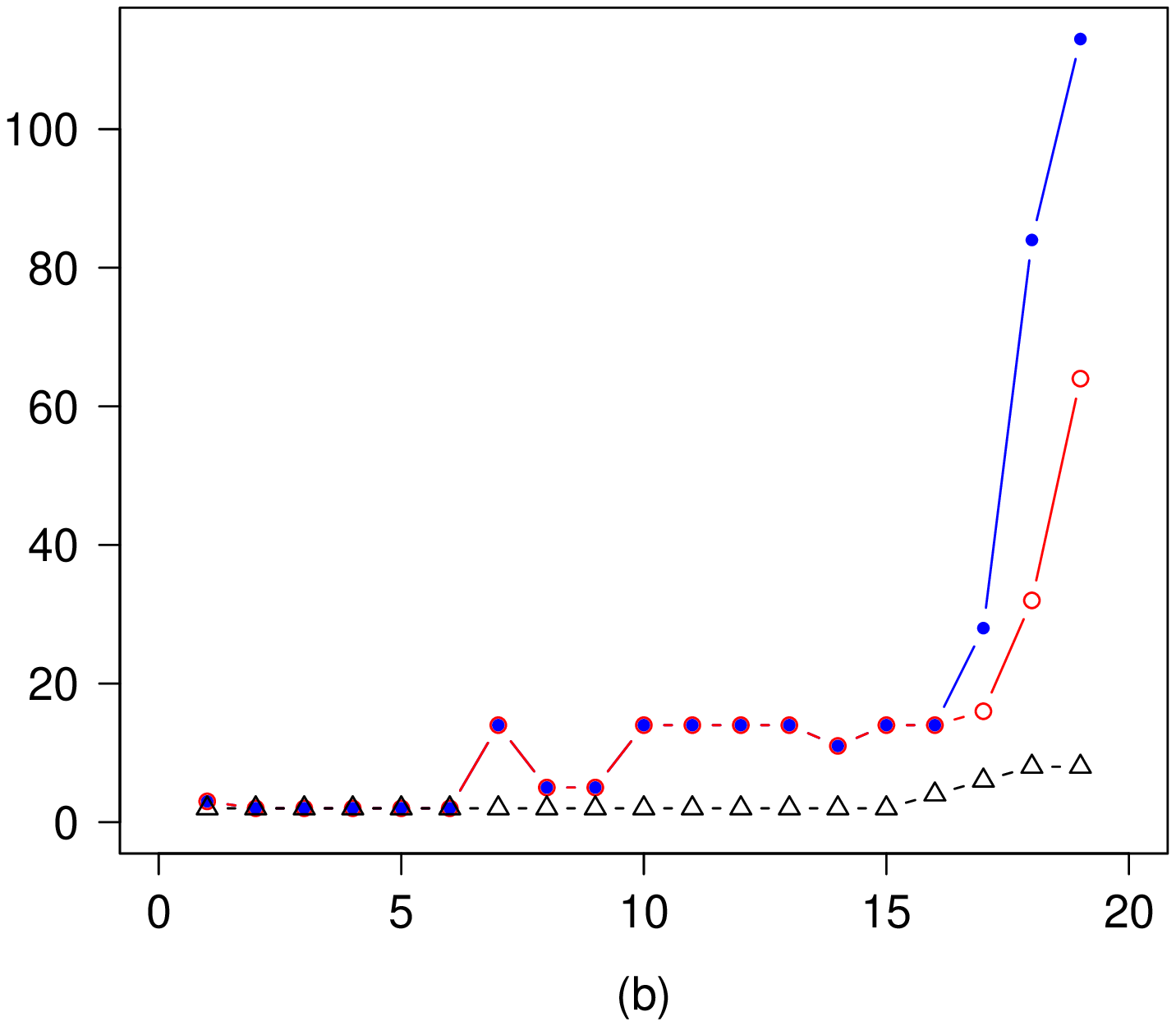}
\par\end{centering}

\caption{\label{fig:-Schulz-iteration-constant coefs}Results for Example~\ref{ex:Inverse-operator-for-poisson}.
Schulz error $\left\Vert \mathbb{E}_{n}\right\Vert =\frac{1}{2}\left(\left\Vert \mathbb{I}-\mathbb{X}_{n}\mathbb{B}\right\Vert +\left\Vert \mathbb{I}-\mathbb{B}\mathbb{X}_{n}\right\Vert \right)/\left\Vert I\right\Vert $
per iteration $n=1,2,3,\dots$ for constructing the Green's function
for (\ref{eq:periodic constant coefs}) and separation rank of Schulz
iterate $\mathbb{X}_{n}$ before (dots) and after (circles) reduction
by tensor ID. The triangles show the separation rank after applying
ALS (with a fixed number of iterations) to the result of tensor ID.}

\end{figure}

\subsubsection{Variable coefficient elliptic operator in dimension $d=10$.\label{ex:Variable-coefficient-elliptic-operator}}

Next we consider the 10-dimensional PDE on the unit cube,
\begin{eqnarray}
-\nabla\cdot(a(\mathbf{x})\nabla u(\mathbf{x})) & = & f(\mathbf{x}),\,\,\,\,\,\,\,\,\,\,\,\,\mathbf{x}\in(0,1)^{10},\label{eq:elliptic PDE var coefs}
\end{eqnarray}
with periodic boundary conditions and where
\begin{equation}
a(\mathbf{x})=1-0.9\exp\left(-3\times10^{3}\cdot(\mathbf{x}-0.5)^{2}\right).\label{eq:poisson var coef gaussian}
\end{equation}
In this case we reformulate the problem by using the constant coefficient
Green's function to convert (\ref{eq:elliptic PDE var coefs}) into
an integral equation, i.e., the constant coefficient Green's function
is used as a preconditioner. By separating the constant from the variable
terms in (\ref{eq:elliptic PDE var coefs}), the discretized elliptic
operator $\mathbb{A}$ can be split into constant and variable parts,
\begin{equation}
\mathbb{A}=\mathbb{A}_{c}+\mathbb{A}_{v}.\label{eq: pde operator space + stoch-1}
\end{equation}
Applying the discretized, constant coefficient Green's function $\mathbb{G}_{c}$
to $\mathbb{A}$, we obtain
\begin{equation}
\mathbb{B}\,=\,\mathbb{G}_{c}\left(\mathbb{A}_{c}+\mathbb{A}_{v}\right)\,\,=\,\,\mathbb{I}+\mathbb{G}_{c}\mathbb{A}_{v}.\label{eq: operator B-1}
\end{equation}
The variable coefficient Green's function $\mathbb{G}$ of equation
(\ref{eq:elliptic PDE var coefs}) can now be constructed by computing
$\mathbb{B}^{-1}$ via Schulz iteration and setting $\mathbb{G}=\mathbb{B}^{-1}\mathbb{G}_{c}$. 

In this example, second order staggered finite differences are used
to discretize the derivative operators in each direction at $128$
equispaced points, leading to the elliptical operator $\mathbb{A}$
with separation rank $20$. Upon applying the constant coefficient
Green's function $\mathbb{G}_{c}$ and truncating terms with small
$s$-values, the preconditioned operator $\mathbb{B}$ has nominal
separation rank $271$. To $6$ digits of relative accuracy, however,
the separation rank of $\mathbb{B}$ may be dramatically reduced (via,
e.g., ALS iteration), and thus the iteration proceeds on an operator
of separation rank only $5$. As before, we represent all operators
in a wavelet basis to ensure the sparsity of both $\mathbb{B}$ and
$\mathbb{B}^{-1}$. Results are shown in Figure~\ref{fig:Schulz-iteraiton-var-coefs}.

\begin{figure}
\begin{centering}
\includegraphics[scale=0.4]{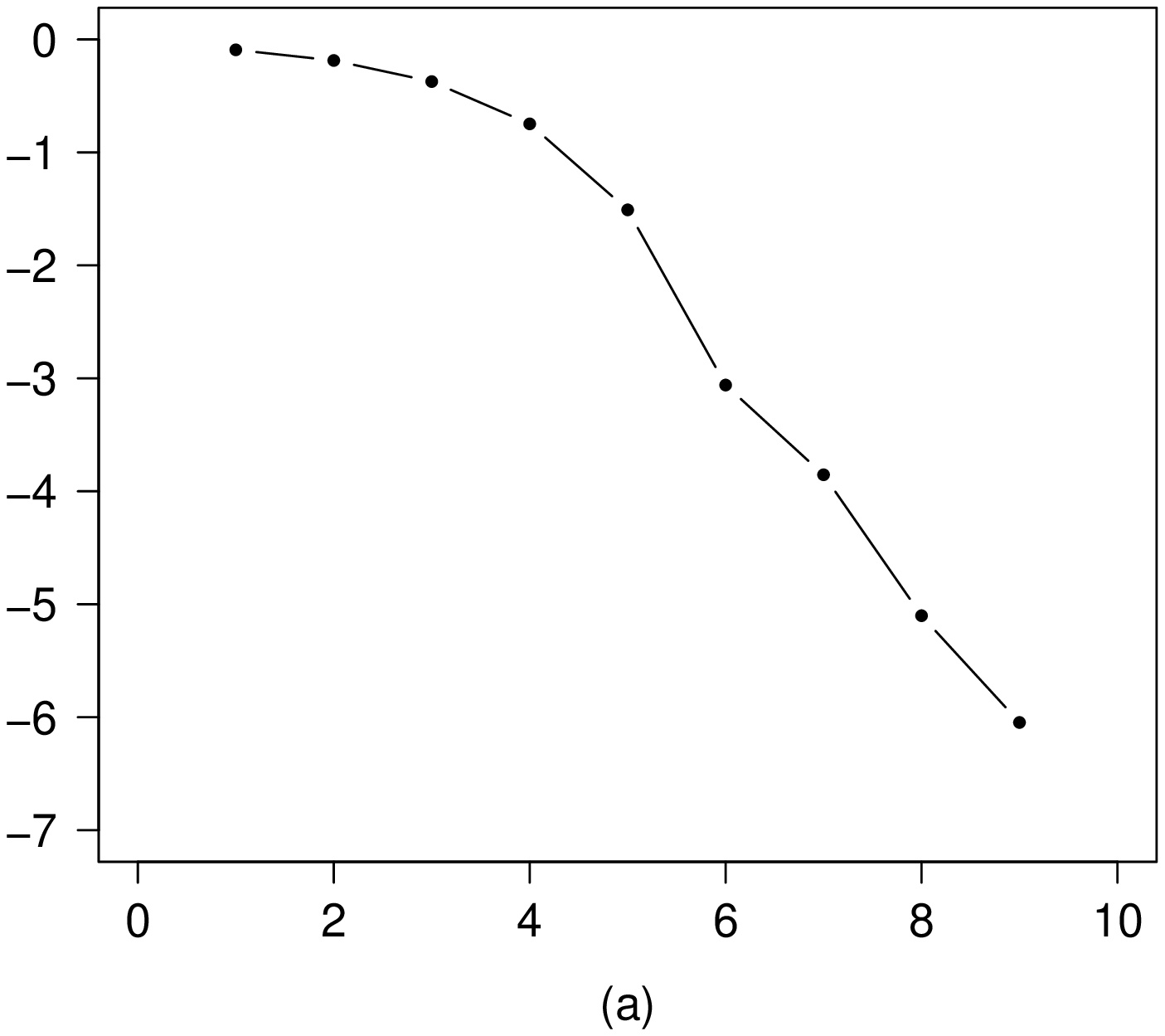}\includegraphics[scale=0.4]{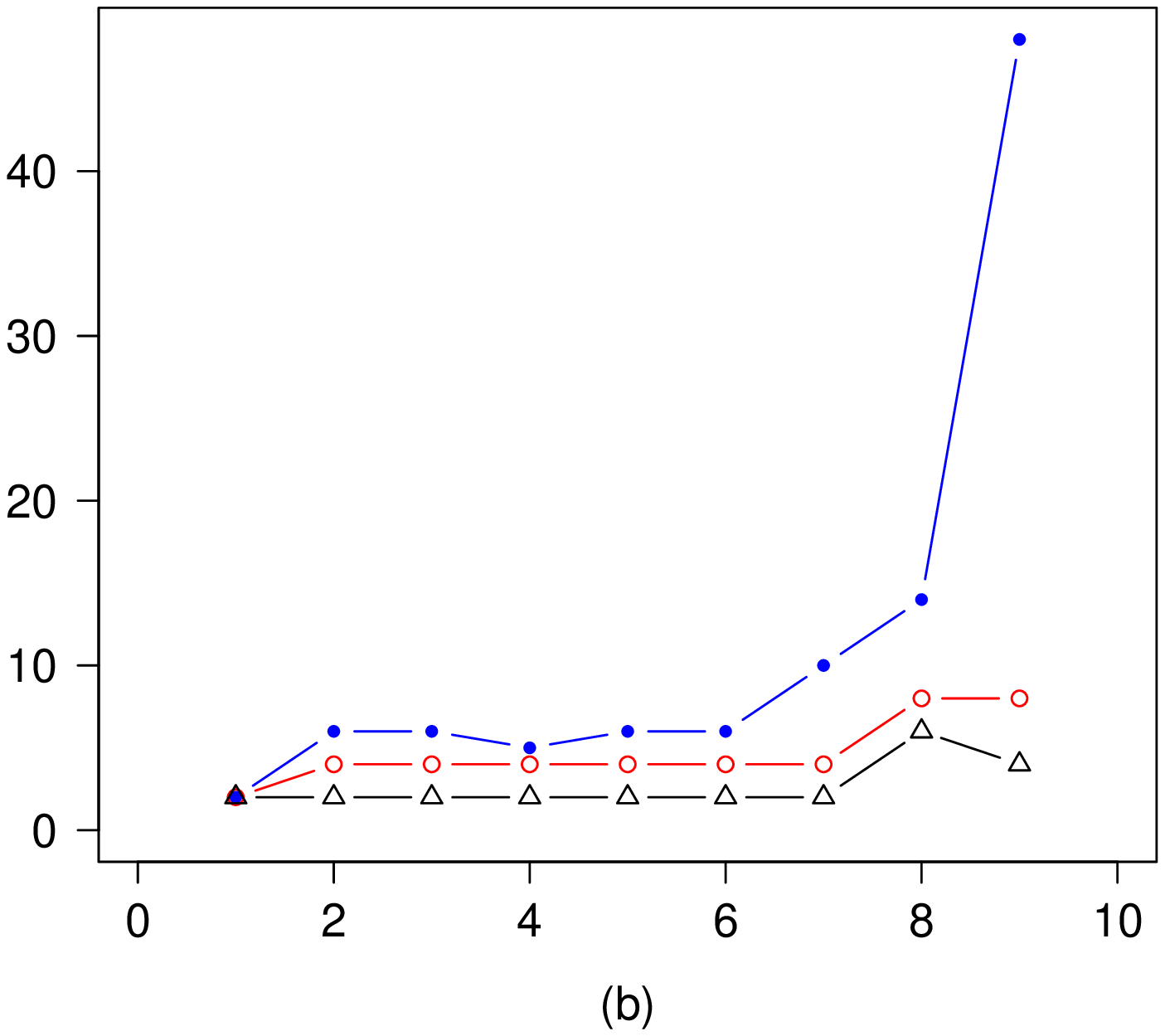}
\par\end{centering}

\caption{\label{fig:Schulz-iteraiton-var-coefs}Results for Example~\ref{ex:Variable-coefficient-elliptic-operator}.
Schulz error $\left\Vert \mathbb{E}_{n}\right\Vert =\frac{1}{2}\left(\left\Vert \mathbb{I}-\mathbb{X}_{n}\mathbb{B}\right\Vert +\left\Vert \mathbb{I}-\mathbb{B}\mathbb{X}_{n}\right\Vert \right)/\left\Vert I\right\Vert $
per iteration $n=1,2,3,\dots$ on $\log_{10}$ scale and separation
rank of Schulz iterate before reduction (dots) and after (circles)
tensor ID. The triangles show the separation rank after applying ALS
(with a fixed number of iterations) to the result of tensor ID.}

\end{figure}

\subsubsection{Stochastic PDE in dimension $d=8$.\label{sub:Stochastic-PDE-in-8d}}

In our last example, we consider the 8-dimensional stochastic PDE,
\[
-\nabla\cdot(a(\mathbf{x},\omega)\nabla u(\mathbf{x},\omega))=f(\mathbf{x},\omega),\,\,\,\,\,\,\,\,\,\,\mathbf{x}\in(0,1)^{3},
\]
where $\omega\in\Omega$ corresponds to probability space $(\Omega,\mathcal{F},P)$
and the (spatially asymmetric) variable coefficient is given by 
\[
a(\mathbf{x},\omega)=1+\sum_{l=1}^{5}2^{-l}a_{l}(\omega)\sin(2l\pi x)\sin(2l\pi y)\sin(2(l+1)\pi z).
\]
In other words, the variable coefficient is understood to have a deterministic
spatial part with random coefficients, which we take in this example
to be uniformly distributed, $a_{l}\sim U\left(\left[-1,1\right]\right).$
The function $a$ may be thought of as a Karhunen-Loeve (KL) expansion
of a random field with some (here, unspecified) covariance function.
The resulting operator is thus 8-dimensional, with three spatial and
five stochastic dimensions, the latter of which are discretized at
Clenshaw-Curtis quadrature nodes,
\[
a_{l}(\omega_{n})=-\cos\left(\frac{\pi m}{M_{stoch}-1}\right),\,\,\,\,\,\,\, l=1,\dots,5,\,\,\,\,\,\,\, m=1,\dots,M_{stoch}-1.
\]
As before, the spatial operator is discretized on a staggered grid
at $128$ points using second order finite differences, and we use
$M_{stoch}=16$ nodes in the stochastic directions. For the preconditioner,
we use the constant coefficient Green's function in the spatial directions
and identity matrices in the stochastic directions, leading to a preconditined
operator $\mathbb{B}$ with nominal separation rank of $1188$. This
number is reduced to $24$ by truncating terms with small $s$-svalues
and then applying ALS to the result. Results are shown in Figure~\ref{fig:Schulz-iteration-stochastic-pde}.

\begin{figure}
\begin{centering}
\includegraphics[scale=0.4]{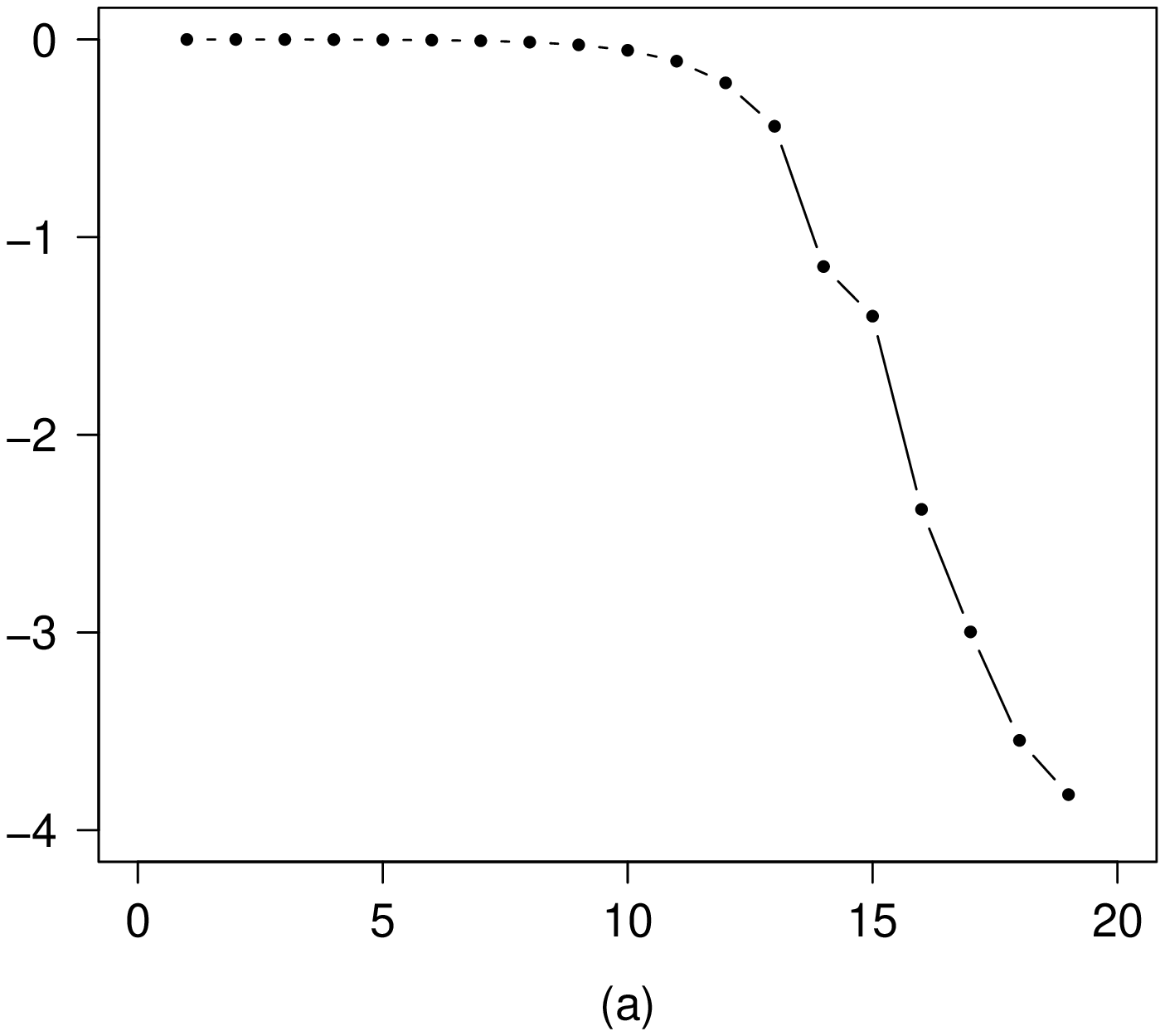}\includegraphics[scale=0.4]{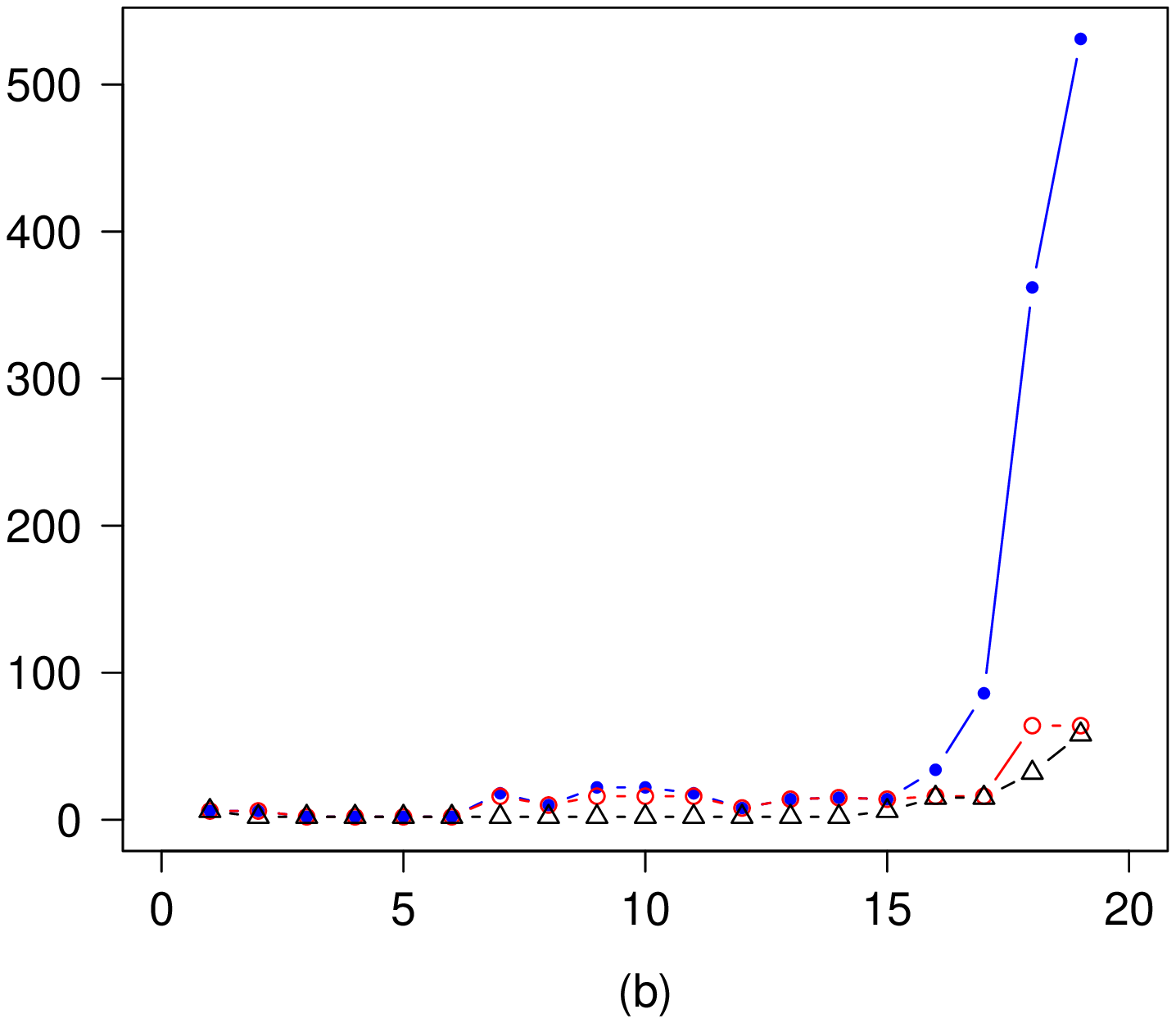}
\par\end{centering}

\caption{\label{fig:Schulz-iteration-stochastic-pde}Results for Example~\ref{sub:Stochastic-PDE-in-8d}.
Schulz error $\left\Vert \mathbb{E}_{n}\right\Vert =\frac{1}{2}\left(\left\Vert \mathbb{I}-\mathbb{X}_{n}\mathbb{B}\right\Vert +\left\Vert \mathbb{I}-\mathbb{B}\mathbb{X}_{n}\right\Vert \right)/\left\Vert I\right\Vert $
per iteration $n=1,2,3,\dots$ on $\log_{10}$ scale and separation
rank of Schulz iterate before reduction (dots) and after (circles)
tensor ID. The triangles show the separation rank after applying ALS
(with a fixed number of iterations) to the result of tensor ID.}

\end{figure}

\subsection{A limitation of tensor ID: orthogonal decompositions}

Finally, we illustrate a limitation of using the tensor ID by constructing
an example for which it is not expected to work at all. Consider the
$d$-dimensional tensor
\begin{equation}
\mathcal{U}=\bigotimes_{j=1}^{d}\left(\sum_{l=1}^{L}\sigma_{l}\mathbf{u}_{j}^{(l)}\right)\label{eq:example-3-orth}
\end{equation}
where $\sigma_{l}=1$ for all $l=1,\dots,L$, and 
\begin{equation}
\langle\mathbf{u}_{j}^{(l)},\mathbf{u}_{j}^{(m)}\rangle=\delta_{lm}\label{eq:example-3-iprod}
\end{equation}
for $j=1,\dots,d$ and $l,m=1,\dots L$. Objects such as (\ref{eq:example-3-orth})
appear quite frequently in various fields. For example, if we take
the vectors $\mathbf{u}_{j}^{(l)}$ to correspond to orthogonal polynomials
with respect to a specified probability measure, discretized at properly
chosen quadrature nodes, $\mathcal{U}$ may be interpreted as a tensor
order polynomial chaos expansion (PCE) \cite{WIENER:1938}. 

Expanding the tensor product (\ref{eq:example-3-orth}), the result
is seen to have nominal separation rank $L^{d}$. By (\ref{eq:example-3-iprod}),
all of the terms are mutually orthogonal and, thus, the columns of
corresponding matrix $U$ in (\ref{eq:umat}) are orthonormal. Consequently,
the tensor ID cannot provide a reduction of the separation rank. However,
if the $s$-values decay rapidly, we can truncate with controlled
error by using Parseval's identity (cf., Example~\ref{sub:Example-of-spectral-properties}).
In fact, truncation may in this case be viewed as a special instance
of the tensor ID, with skeleton indices corresponding to the $k$
terms with $s$-values above the accuracy threshold.

\section*{Acknowledgements}

We would like to thank Martin Mohlenkamp (Ohio University) and Terry
Haut (LANL) for making many useful suggestions to improve the manuscript.

\bibliographystyle{plain}

\section{Online supplement \label{sec:Appendix A}}

\subsection{Proof of Theorem \ref{thm:gram-id}}
\begin{lem}
\label{lem:acs}Suppose that $B_{k}=\bcs P$ is a rank-$k$ interpolative
decomposition of $B$. Then 
\begin{equation}
B_{k}=A^{*}\acs P.\label{eq: bk =00003D a* ac p}
\end{equation}
\end{lem}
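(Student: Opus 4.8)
The plan is to exploit the fact that forming a column skeleton is just right-multiplication by a selection matrix, and that this commutes with the left factor in $B=A^{*}A$. Concretely, let $\mathcal{L}_{k}=\{l_{1},\dots,l_{k}\}$ be the index set of the skeleton columns selected when computing the ID $B_{k}=\bcs P$, and let $\Pi$ denote the $n\times k$ matrix whose $m$-th column is the standard basis vector $e_{l_{m}}\in\mathbb{R}^{n}$. By definition of the column skeleton, $\bcs=B\Pi$, and the column skeleton of $A$ associated with the \emph{same} index set satisfies $\acs=A\Pi$.

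The key step is then a one-line computation: since $B=A^{*}A$,
\begin{equation}
\bcs=B\Pi=A^{*}A\Pi=A^{*}(A\Pi)=A^{*}\acs.
\end{equation}
Substituting this into the given interpolative decomposition $B_{k}=\bcs P$ yields $B_{k}=A^{*}\acs P$, which is the claimed identity. (Equivalently, one can argue entrywise: the $(i,l_{m})$ entry of $B$ is $\langle a_{i},a_{l_{m}}\rangle$ where $a_{j}$ is the $j$-th column of $A$, so the $m$-th column of $\bcs$ is $A^{*}a_{l_{m}}$, and collecting these columns gives $A^{*}\acs$.)

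There is no real analytic obstacle here; the statement is a bookkeeping identity about column selection. The only point requiring care is the consistency of the indexing convention: the same set $\mathcal{L}_{k}$ must be used to extract columns of $B$ and columns of $A$, which is exactly how $\acs$ is defined in the statement of Theorem~\ref{thm:gram-id} (where $\acs$ is computed from $B$ alone by selecting the indices produced by the ID of $B$). I would state this convention explicitly at the start of the proof so that the identity $\acs=A\Pi$ with the same $\Pi$ is unambiguous, and then the rest follows immediately.
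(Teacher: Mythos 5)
Your proof is correct and takes essentially the same approach as the paper's: the paper computes entrywise that $[B_k]_{ll'} = \sum_j \langle\mathbf{a}^{(l)},\mathbf{a}^{(l'_j)}\rangle[P]_{jl'} = [A^*\acs P]_{ll'}$, which is exactly your parenthetical remark, while your selection-matrix formulation $\bcs = B\Pi = A^*A\Pi = A^*\acs$ is just a cleaner packaging of the same bookkeeping.
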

\begin{proof}
By definition, we have that 
\begin{equation}
[B]_{ll'}=[A^{*}A]_{ll'}=\langle\mathbf{a}^{(l)},\mathbf{a}^{(l')}\rangle,\,\,\, l,l'=1,\dots,n,\label{eq: b diag elements}
\end{equation}
where $\mathbf{a}^{(l)}$ denotes the column $l$ of $A$. Thus, the
column skeleton of $B$ is of the form
\begin{equation}
[\bcs]_{lj}=\langle\mathbf{a}^{(l)},\mathbf{a}^{(l_{j}')}\rangle,\,\,\,\, j=1,\dots,k,\label{eq: bc elements}
\end{equation}
where indices $l_{j}'$ are those of the skeleton columns of $B$;
we denote this subset as $\mathcal{L}_{k}$. We have
\[
[B_{k}]_{ll'}=\sum_{j=1}^{k}[\bcs]_{lj}[P]_{jl'}=\sum_{j=1}^{k}\langle\mathbf{a}^{(l)},\mathbf{a}^{(l'_{j})}\rangle[P]_{jl'}=\langle\mathbf{a}^{(l)},\sum_{j=1}^{k}\mathbf{a}^{(l'_{j})}[P]_{jl'}\rangle=[A^{*}\acs P]_{ll'}
\]
\end{proof}
\begin{lem}
\label{lem:id-symmetric}Suppose $B$ is a symmetric matrix that admits
a rank-$k$ interpolative decomposition $\hat{B}_{k}=\bcs\hat{P}$
such that
\begin{equation}
\left\Vert B-\hat{B}_{k}\right\Vert _{2}\leq\epsilon_{k}.\label{eq: b minus bk hat norm}
\end{equation}
Then it also admits a decomposition of the form 
\begin{equation}
\hat{B}_{k}=P^{*}\bs\hat{P},\label{eq: bk hat}
\end{equation}
with identical error. Symmetrizing with respect to $P=\left[\begin{array}{c|c}
I & S\end{array}\right]$, i.e., setting
\begin{equation}
B_{k}=P^{*}\bs P,\label{eq: bk}
\end{equation}
this approximation satisfies
\begin{equation}
\left\Vert B-B_{k}\right\Vert _{2}\leq(1+\sqrt{nk(n-k)})\epsilon_{k}.\label{eq: b minus bk norm}
\end{equation}
\end{lem}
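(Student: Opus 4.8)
The plan is to prove the two displayed inequalities in turn: the first by exhibiting an explicit admissible $P$, the second by a one-line perturbation argument. Throughout I would work with the $n\times k$ selection matrix $E$ whose columns are the standard basis vectors indexed by $\mathcal{L}_k$, so that $B_c=BE$, the row skeleton is $B_r:=E^{*}B$, and $B_s=E^{*}BE=E^{*}B_c$. Since $B$ is symmetric, $B_r=B_c^{*}$, and since $B_c$ has full column rank $k$, the principal submatrix $B_s$ is nonsingular: in the setting $B=A^{*}A$ this is immediate because $B_c=A^{*}A_c$ of rank $k$ forces the selected columns $A_c$ to be independent, whence $B_s=A_c^{*}A_c\succ0$; in general it is guaranteed by the rank-revealing pivoting used to choose $\mathcal{L}_k$. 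I then set $P:=B_s^{-1}B_r$. Using $B_r^{*}=B_c$ and $B_s^{*}=B_s$ gives $P^{*}=B_cB_s^{-1}$, hence $P^{*}B_s=B_c$ and therefore $P^{*}B_s\hat P=B_c\hat P=\hat B_k$, which is exactly the asserted factorization with the \emph{same} residual matrix $B-\hat B_k$, hence the same error $\epsilon_k$. Moreover $PE=B_s^{-1}(E^{*}BE)=I_k$, so in the ordering in which the skeleton indices come first $P$ has the block form $[\,I\mid S\,]$ claimed in the statement.

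For the second claim, set $B_k:=P^{*}B_sP=B_cB_s^{-1}B_r$; this matrix is symmetric because $B_r^{*}=B_c$ and $B_s^{*}=B_s$. With $X:=B-\hat B_k$ (so $\|X\|_2\le\epsilon_k$) one computes directly from $B=B_c\hat P+X=BE\hat P+X$ that $B_r=E^{*}B=B_s\hat P+E^{*}X$, hence $\hat P-P=-B_s^{-1}E^{*}X$, and therefore
\[
B-B_k \;=\; B_c(\hat P-P)+X \;=\; -\,B_cB_s^{-1}E^{*}X+X \;=\;(I-\Pi^{*})X,\qquad \Pi^{*}:=B_cB_s^{-1}E^{*}=BEB_s^{-1}E^{*}.
\]
(Equivalently, $\hat B_k=B\Pi_1$ and $B_k=B\Pi$ for the idempotents $\Pi_1=E\hat P$ and $\Pi=EB_s^{-1}E^{*}B$, which have common range $\mathrm{ran}(E)$ and hence the same null space, so $(I-\Pi)(I-\Pi_1)=I-\Pi$; expanding $B(I-\Pi)(I-\Pi_1)$ and using that $B\Pi=B_k$ is symmetric recovers $(I-\Pi^{*})X$.) Consequently $\|B-B_k\|_2\le(1+\|\Pi\|_2)\epsilon_k$, and since $E$ is an isometry, $\|\Pi\|_2=\|\Pi^{*}\|_2=\|B_s^{-1}B_r\|_2=\|P\|_2$. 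Finally I would bound $\|P\|_2\le\|P\|_F\le\sqrt{nk(n-k)}$ from the block form $P=[\,I\mid S\,]$ together with the entrywise bound $|S_{ij}|\le 1$ on the interpolation coefficients, the row-skeleton analogue of Property~2 in Lemma~\ref{lem:matrix-id-lemma}; this yields the factor $1+\sqrt{nk(n-k)}$.

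The main obstacle is precisely this last estimate on $\|P\|_2$. The coefficient matrix $P=B_s^{-1}B_r$ that makes the factorization in the first claim \emph{exact} is a priori not the one produced by a pivoted $QR$, so certifying that $P$ (equivalently the block $S$) is entrywise bounded requires that the skeleton $B_s$ selected during the matrix ID of $B$ be, at least locally, of maximal volume, i.e.\ that the strong rank-revealing variant underlying Lemma~\ref{lem:matrix-id-lemma} be used. Reconciling ``the $P$ that is exact'' with ``the $P$ that is well conditioned,'' and tracking the symmetry of $B$ so that the column and row skeletons coincide on $\mathcal{L}_k$, is where the real care is needed; everything else is bookkeeping with $E$, $B_c$, $B_r$, and $B_s$.
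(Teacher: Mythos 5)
Your proof is correct and takes a genuinely different, and in places cleaner, route than the paper's. You work directly with the selection matrix $E$ and the exact row--interpolation matrix $P=\bs^{-1}B_{r}$: the first claim reduces to the identity $P^{*}\bs=\bcs$ (symmetry of $B$), and the second to the exact residual formula $B-B_{k}=(I-\Pi^{*})X$ with $\Pi^{*}=\bcs\bs^{-1}E^{*}$, which yields $(1+\left\Vert P\right\Vert _{2})\epsilon_{k}$ in one step. The paper instead runs a pivoted QR of $B$ followed by an unpivoted QR of the skeleton to manufacture $S$, and then does a block computation in the permuted basis: the off-diagonal blocks of $B-B_{k}$ vanish because $S$ solves the normal equations exactly (the same fact as your $P$ being the exact interpolant on the rows), and two add-and-subtract triangle-inequality steps give $(1+\left\Vert S\right\Vert _{2})\epsilon_{k}$; the two bounds are essentially identical since $P=\left[\begin{array}{c|c}I & S\end{array}\right]P_{c}^{*}$. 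Your closing caveat is exactly right and is shared by the paper: the final norm bound on $S$ (equivalently on $P$) is not a consequence of the algebra but of the rank-revealing property of the skeleton selection; the paper simply asserts $\left\Vert S\right\Vert _{2}\leq\sqrt{nk(n-k)}$ in (\ref{eq:id-smatrix-norm-bound}) by appeal to \cite{C-G-M-R:2005}. Two minor points: your entrywise bound $|S_{ij}|\leq1$ is slightly optimistic --- strong rank-revealing QR and the randomized variant guarantee only a constant $\geq1$ (e.g., $2$ in Lemma~\ref{lem:(Observation-3.3)}) --- but any fixed constant is absorbed by the generous factor $\sqrt{nk(n-k)}$; and your Frobenius estimate gives $\left\Vert P\right\Vert _{F}\leq\sqrt{k(n-k+1)}$, which is indeed $\leq\sqrt{nk(n-k)}$ for $n>k\geq1$, so the stated constant is recovered.
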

\begin{proof}
Define the $n\times n$ matrix $Y=B-B_{k}.$ Following the proof of
Theorem 3 in \cite{C-G-M-R:2005}, we first compute a pivoted QR factorization
of $B$ such that
\begin{equation}
BP_{c}=QR,\label{eq: b pc =00003D q r}
\end{equation}
where the $n\times k$ matrix $Q$ has orthonormal columns, $R$ is
a $k\times n$ upper triangular matrix, and $P_{c}$ is an $n\times n$
permutation matrix. We label blocks of the matrices $Q$ and $R$
as 
\begin{equation}
Q=\left[\begin{array}{c|c}
Q_{11} & Q_{12}\\
\hline Q_{21} & Q_{22}
\end{array}\right],\,\,\,\,\,\,\,\,\, R=\left[\begin{array}{c|c}
R_{11} & R_{12}\\
\hline 0 & R_{22}
\end{array}\right].\label{eq:id-qr}
\end{equation}
Here the blocks of $Q$ have the following dimensions dimensions:
$Q_{11}$ is $k\times k$, $Q_{12}$ is $k\times(n-k)$, $Q_{21}$
is $(n-k)\times k$ and $Q_{22}$ is $(n-k)\times(n-k)$. The blocks
of $R$ have the corresponding dimensions so that the product $QR$
may be expressed in terms of the products of blocks. We set (as in
\cite{C-G-M-R:2005}) 
\begin{equation}
\bcs=\left[\begin{array}{c}
Q_{11}R_{11}\\
\hline Q_{21}R_{11}
\end{array}\right],\label{eq:id-skel-and-error-defn}
\end{equation}
so that $B=\bcs\hat{P}+Y,$ where

\begin{equation}
Y=\left[\begin{array}{c|c}
0 & Q_{12}\\
\hline 0 & Q_{22}
\end{array}\right]R_{22}P_{c}^{*},\label{eq: ymat in appendix proof}
\end{equation}
and 
\begin{equation}
\hat{P}=\left[\begin{array}{c|c}
I & T\end{array}\right]P_{c}^{*}\label{eq:pmat-hat}
\end{equation}
 and $T$ the least squares solution of $R_{11}T=R_{12}$ ($T$ may
not be unique if $R_{11}$ is ill-conditioned). 

Since $B$ is symmetric, a suitable basis for the column space of
$B$ is also a suitable basis for the row space. We compute the QR
decomposition of $\bcs^{*}P_{c}^{*}$ (without pivoting),

\begin{equation}
\bcs^{*}P_{c}^{*}=\tilde{Q}\left[\begin{array}{c|c}
\tilde{R}_{11} & \tilde{R}_{12}\end{array}\right]\label{eq:bcs-qr-1}
\end{equation}
and notice that the blocks $\tilde{R}_{21}$ and $\tilde{R}_{22}$
are zero since $\bcs$ has exactly rank $k$. Since $B_{CS}=A^{*}\acs$,
using Lemma~\ref{lem:acs} we have
\begin{equation}
\bcs^{*}P_{c}^{*}=\acs^{*}\left[\begin{array}{c|c}
\acs & \hat{A}\end{array}\right],\label{eq:bcs-qr-2}
\end{equation}
where the $n\times(n-k)$ block $\hat{A}$ consist of the non-skeleton
columns of $A$. Because the columns have been already pivoted, it
follows that
\begin{eqnarray}
\tilde{Q}\tilde{R}_{11} & = & \acs^{*}\acs\nonumber \\
\tilde{Q}\tilde{R}_{12} & = & \acs^{*}\hat{A}.\label{eq:bcs-qr-3}
\end{eqnarray}
Finally, again following \cite{C-G-M-R:2005}, we have
\begin{eqnarray}
B & = & P_{c}\left[\begin{array}{c}
I\\
\hline \tilde{R}_{12}^{*}(\tilde{R}_{11}^{*})^{-1}
\end{array}\right]\tilde{R}_{11}^{*}\tilde{Q}^{*}P+Y\nonumber \\
 & = & P^{*}\bs\hat{P}+Y\label{eq:id-full-skeleton}
\end{eqnarray}
where $\bs=\acs^{*}\acs$ is the skeleton of $B$, 
\begin{equation}
P=\left[\begin{array}{c|c}
I & S\end{array}\right]P_{c}^{*}.\label{eq:id-p-tilde-matrix}
\end{equation}
and $S$ is a least squares solution of 
\begin{eqnarray}
S\tilde{R}_{11}^{*} & = & \tilde{R}_{12}^{*}.\label{eq: s r11* =00003D r12*}
\end{eqnarray}
satisfying
\begin{equation}
\left\Vert S\right\Vert _{2}\leq\sqrt{nk(n-k)}.\label{eq:id-smatrix-norm-bound}
\end{equation}
 Notice that the error is unchanged upon factorization of the row
space. This proves the first claim.

To prove the second claim, notice first that the left (row) coefficient
matrix $S$ solves
\begin{equation}
\acs^{*}\acs S-\acs^{*}\hat{A}=0,\label{eq:id-left-normal-equations}
\end{equation}
while the right (column) coefficient matrix $T$ solves
\begin{equation}
\left[\begin{array}{c}
\acs^{*}\acs\\
\hline \hat{A}^{*}\acs
\end{array}\right]T=\left[\begin{array}{c}
\acs^{*}\hat{A}\\
\hline \hat{A}^{*}\hat{A}
\end{array}\right]\label{eq:id-right-normal-equations}
\end{equation}
approximately, such that 
\begin{eqnarray}
\left\Vert \acs^{*}\acs T-\acs^{*}\hat{A}\right\Vert _{2} & \leq & \epsilon_{k},\nonumber \\
\bigl\Vert\hat{A}^{*}\acs T-\hat{A}^{*}\hat{A}\bigr\Vert_{2} & \leq & \epsilon_{k}.\label{eq:column-id-submatrix-error-bounds}
\end{eqnarray}
The quantity of interest is $B-\tilde{B}_{k}$, which we permute so
that the skeleton columns are in the first $k$ positions, giving
\begin{eqnarray}
P_{c}^{*}\left(B-P^{*}\bs P\right)P_{c} & = & \left[\begin{array}{c}
\acs^{*}\\
\hline \hat{A}^{*}
\end{array}\right]\left[\begin{array}{c|c}
\acs & \hat{A}\end{array}\right]-\left[\begin{array}{c}
I\\
\hline S^{*}
\end{array}\right]\acs^{*}\acs\left[\begin{array}{c|c}
I & S\end{array}\right]\nonumber \\
 & = & \left[\begin{array}{c|c}
0 & 0\\
\hline 0 & \hat{A}^{*}\hat{A}-S^{*}\acs^{*}\acs S
\end{array}\right]\label{eq: big matrix expression 1}
\end{eqnarray}
where the off-diagonal blocks are set to zero by virtue of the fact
that $S$ solves the normal equations exactly. Adding and subtracting
$S^{*}\acs^{*}\acs T$ to the non-zero block, we have
\begin{eqnarray}
\bigl\Vert\hat{A}^{*}\hat{A}-S^{*}\acs^{*}\acs S\bigr\Vert_{2} & = & \bigl\Vert\hat{A}^{*}\hat{A}-S^{*}\bs S+S^{*}\bs T-S^{*}\bs T\bigr\Vert_{2}\nonumber \\
 & \leq & \bigl\Vert\hat{A}^{*}\hat{A}-S^{*}\bs T\bigr\Vert_{2}+\bigl\Vert S^{*}\bs S-S^{*}\bs T\bigr\Vert_{2}\nonumber \\
 & \leq & \epsilon_{k}+\bigl\Vert S^{*}\bs S-S^{*}\bs T\bigr\Vert_{2}\label{eq: big matrix expression 2}
\end{eqnarray}
where the second line follows from the triangle inequality and the
third from (\ref{eq:column-id-submatrix-error-bounds}). Adding and
subtracting $S^{*}\acs^{*}\hat{A}$, we have
\begin{eqnarray}
\bigl\Vert S^{*}\bs S-S^{*}\bs T\bigr\Vert_{2} & = & \bigl\Vert(S^{*}\bs S-S^{*}\acs^{*}\hat{A})-(S^{*}\bs T-S^{*}\acs^{*}\hat{A})\bigr\Vert_{2}\nonumber \\
 & \leq & \bigl\Vert S\bigr\Vert_{2}\left(\bigl\Vert\bs S-\acs\hat{A}\bigr\Vert_{2}+\bigl\Vert\bs T-\acs^{*}\hat{A}\bigr\Vert_{2}\right)\nonumber \\
 & \leq & \bigl\Vert S\bigr\Vert_{2}\epsilon_{k}\label{eq: big matrix expression 3}
\end{eqnarray}
by the triangle inequality, (\ref{eq:id-left-normal-equations}) and
(\ref{eq:column-id-submatrix-error-bounds}). Hence, applying (\ref{eq:id-smatrix-norm-bound}),
we conclude
\begin{equation}
\bigl\Vert B-B_{k}\bigr\Vert_{2}\leq(1+\bigl\Vert S\bigr\Vert_{2})\epsilon_{k}\leq(1+\sqrt{nk(n-k)})\epsilon_{k}.\label{eq: b minus bk norm in proof}
\end{equation}
\end{proof}
\begin{cor}
\label{cor:P-projector}The coefficient matrix $P$ in (\ref{eq:id-p-tilde-matrix})
solves the normal equations
\begin{eqnarray}
\acs^{*}\acs P & = & \acs^{*}A.\label{eq:id-normal-equations}
\end{eqnarray}
and, thus, minimizes the residual error
\begin{equation}
\left\Vert \acs P-A\right\Vert _{2}.\label{eq:acsp-minimum-l2-norm}
\end{equation}
Setting $A_{k}=\acs P$, it follows that the residual $X=A-A_{k}$
is in the null space of $A_{k}^{*}$.\end{cor}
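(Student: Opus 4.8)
The plan is to read off the normal equations (\ref{eq:id-normal-equations}) directly from the construction of $P$ in the proof of Lemma~\ref{lem:id-symmetric}, and then to obtain the minimality of the residual and the final assertion about $X$ as short consequences of that identity.

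First I would recall from (\ref{eq:id-p-tilde-matrix}) that $P=\left[\begin{array}{c|c} I & S\end{array}\right]P_{c}^{*}$, where $P_{c}$ permutes the columns of $A$ so that the $k$ skeleton columns come first, i.e. $AP_{c}=\left[\begin{array}{c|c}\acs & \hat{A}\end{array}\right]$, and where $S$ was chosen so as to satisfy $\acs^{*}\acs S=\acs^{*}\hat{A}$, which is (\ref{eq:id-left-normal-equations}). Multiplying $P$ on the left by $\acs^{*}\acs$ and substituting this relation into the second block yields
\[
\acs^{*}\acs P=\left[\begin{array}{c|c}\acs^{*}\acs & \acs^{*}\acs S\end{array}\right]P_{c}^{*}=\left[\begin{array}{c|c}\acs^{*}\acs & \acs^{*}\hat{A}\end{array}\right]P_{c}^{*}=\acs^{*}\left[\begin{array}{c|c}\acs & \hat{A}\end{array}\right]P_{c}^{*}=\acs^{*}A,
\]
which is exactly (\ref{eq:id-normal-equations}). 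Equivalently one can argue column by column: on a skeleton column the residual $\acs P-A$ vanishes because the corresponding column of $P$ is a standard basis vector, while on a non-skeleton column the relation $\acs^{*}(\acs S-\hat{A})=0$ is precisely (\ref{eq:id-left-normal-equations}); together these say $\acs^{*}(\acs P-A)=0$.

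Next I would observe that (\ref{eq:id-normal-equations}) is the usual least-squares characterization. Writing $\Pi=\acs(\acs^{*}\acs)^{-1}\acs^{*}$ for the orthogonal projector onto $\mathrm{range}(\acs)$ (the $k$ skeleton columns being linearly independent by construction, so that $\acs^{*}\acs$ is invertible), (\ref{eq:id-normal-equations}) forces $A_{k}=\acs P=\Pi A$. For an arbitrary $k\times n$ matrix $\tilde{P}$ one splits $\acs\tilde{P}-A=(\acs\tilde{P}-\Pi A)-(I-\Pi)A$ into a term whose columns lie in $\mathrm{range}(\acs)$ and a term whose columns lie in its orthogonal complement; these pieces are columnwise orthogonal, so $(\acs\tilde{P}-A)^{*}(\acs\tilde{P}-A)=(\acs\tilde{P}-\Pi A)^{*}(\acs\tilde{P}-\Pi A)+((I-\Pi)A)^{*}(I-\Pi)A$, whence $\left\Vert \acs\tilde{P}-A\right\Vert _{2}\geq\left\Vert (I-\Pi)A\right\Vert _{2}$ with equality at $\tilde{P}=P$; the identical inequality holds for the Frobenius norm. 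This gives the claimed minimality.

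Finally, setting $X=A-A_{k}=A-\acs P$, the normal equations give at once
\[
A_{k}^{*}X=(\acs P)^{*}(A-\acs P)=P^{*}\left(\acs^{*}A-\acs^{*}\acs P\right)=0,
\]
so every column of $X$ lies in the null space of $A_{k}^{*}$. I expect the only nonroutine point to be the passage from the normal equations (which trivially give the column-wise, hence Frobenius-optimal, residual) to optimality in the spectral norm $\left\Vert \cdot\right\Vert _{2}$; that is handled by the orthogonal splitting above, and everything else is substitution using (\ref{eq:id-left-normal-equations}) and (\ref{eq:id-p-tilde-matrix}).
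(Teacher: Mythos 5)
Your argument is correct and follows the paper's underlying idea — the coefficient matrix $P$ inherits the normal equations from the construction of $S$ — but the route you take to verify this is cleaner and fills in details the paper leaves implicit. The paper obtains the normal equations indirectly: it argues that a least-squares minimizer $\tilde S$ must solve the normal equations, then identifies $\tilde S$ with $\tilde R_{11}^{-1}\tilde R_{12}$ via the pivot-free QR in (\ref{eq:bcs-qr-3}), and finally recognizes this as the $S$ from Lemma~\ref{lem:id-symmetric}. You instead read the relation $\acs^{*}\acs S=\acs^{*}\hat A$ directly off (\ref{eq:id-left-normal-equations}) and substitute into the block structure of $P$ from (\ref{eq:id-p-tilde-matrix}) to get $\acs^{*}\acs P=\acs^{*}A$ in one step; this avoids the QR intermediary entirely, and incidentally sidesteps the transpose bookkeeping in the paper's identification $\tilde S=S$. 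For the minimality of $\left\Vert \acs P-A\right\Vert _{2}$, the paper simply asserts it as a consequence of the normal equations, whereas you supply the orthogonal splitting via $\Pi=\acs(\acs^{*}\acs)^{-1}\acs^{*}$ and the resulting decomposition of $(\acs\tilde P-A)^{*}(\acs\tilde P-A)$ into a sum of two PSD matrices — a genuine small addition, since spectral-norm optimality of the normal-equations solution is less standard than Frobenius-norm optimality and does deserve a proof. Your final computation $A_{k}^{*}X=P^{*}(\acs^{*}A-\acs^{*}\acs P)=0$ is also more explicit than the paper's ``follows from minimality.'' The only caveat is a convention on $P_{c}$ versus $P_{c}^{*}$ when writing $AP_{c}=[\acs\mid\hat A]$; the paper itself is loose on this, and since $P_{c}$ is orthogonal the argument is unaffected so long as the choice is consistent, which yours is.
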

\begin{proof}
We first note that, in order for a matrix $\tilde{S}$ to minimize
\begin{equation}
\left\Vert \acs\tilde{S}-\hat{A}\right\Vert _{2},\label{eq: ac stilde mins a hat norm}
\end{equation}
it must solve the normal equations (\ref{eq:id-normal-equations}).
From (\ref{eq:bcs-qr-3}), applying the (pivot-free) QR algorithm
to this system yields $\acs^{*}\acs=\tilde{Q}\tilde{R}_{11}$ and
$\acs^{*}\hat{A}=\tilde{Q}\tilde{R}_{12}$, and so
\begin{eqnarray}
\tilde{Q}\tilde{R}_{11}\tilde{S} & = & \tilde{S}\tilde{Q}\tilde{R}_{12}\,\,\,\,\,\,\,\,\,\Rightarrow\,\,\,\,\,\,\,\,\,\,\tilde{S}\,=\,\tilde{R}_{11}^{-1}\tilde{R}_{12}.\label{eq: qr11s =00003D sqr12 implies ...}
\end{eqnarray}
Thus $\tilde{S}=S$ in Lemma~\ref{lem:id-symmetric}. Since $P=\left[\begin{array}{c|c}
I & S\end{array}\right]P_{c}^{*}$, the normal equations (\ref{eq:id-normal-equations}) may be augmented
to include the skeleton columns without increasing the error (the
projection is exact for these columns).

The second claim follows from the fact that $P$ minimizes (\ref{eq:acsp-minimum-l2-norm}).\end{proof}
\begin{rem}
It is emphatically \emph{not} the case that replacing $P$ with the
the column-oriented matrix $\hat{P}$ in Corollary~\ref{cor:P-projector}
leads to the same result. In fact, in light of (\ref{eq:id-right-normal-equations})
and (\ref{eq:column-id-submatrix-error-bounds}), it will generally
not be true that $\hat{P}$ solves the normal equations (\ref{eq:id-normal-equations}).

To complete the proof of the theorem, \end{rem}
\begin{lem}
\label{lem:gram-id-error}Suppose that the column-oriented interpolative
decomposition\textbf{ $\hat{B}_{k}=\bcs\hat{P}$ }satisfies $\bigl\Vert B-\hat{B}_{k}\bigr\Vert_{2}\leq\epsilon_{k},$
with $\hat{P}$ defined in (\ref{eq:pmat-hat}), and $B_{k}=P^{*}\bs P$
is its symmetrized version with $P$ defined in (\ref{eq:id-p-tilde-matrix}).
Setting $A_{k}=\acs P$, we have
\begin{equation}
\left\Vert A-A_{k}\right\Vert _{2}=\left\Vert B-B_{k}\right\Vert _{2}^{1/2}\leq\left((1+\sqrt{nk(n-k)})\epsilon_{k}\right)^{1/2}.\label{eq: a minus ak norm in gram id error lemma}
\end{equation}
\end{lem}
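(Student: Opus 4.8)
The plan is to reduce the claim to the single identity $\left\Vert A-A_{k}\right\Vert _{2}^{2}=\left\Vert B-B_{k}\right\Vert _{2}$ and then quote Lemma~\ref{lem:id-symmetric} for the numerical factor. First I would rewrite $B_{k}$ directly in terms of $A_{k}$: since $\bs=\acs^{*}\acs$ (Theorem~\ref{thm:gram-id}) and $A_{k}=\acs P$, we have $B_{k}=P^{*}\bs P=P^{*}\acs^{*}\acs P=\left(\acs P\right)^{*}\left(\acs P\right)=A_{k}^{*}A_{k}$. Thus $B-B_{k}=A^{*}A-A_{k}^{*}A_{k}$, and the whole task is to show this difference equals $\left(A-A_{k}\right)^{*}\left(A-A_{k}\right)$.

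Next I would invoke Corollary~\ref{cor:P-projector}, which says that the matrix $P$ of (\ref{eq:id-p-tilde-matrix}) solves the normal equations $\acs^{*}\acs P=\acs^{*}A$, so the residual $X=A-A_{k}=A-\acs P$ satisfies $A_{k}^{*}X=P^{*}\acs^{*}\left(A-\acs P\right)=P^{*}\left(\acs^{*}A-\acs^{*}\acs P\right)=0$, and likewise $X^{*}A_{k}=\left(A_{k}^{*}X\right)^{*}=0$. Writing $A=A_{k}+X$ and expanding the Gram matrix, the two cross terms vanish and $B=A^{*}A=A_{k}^{*}A_{k}+X^{*}X$. Hence $B-B_{k}=X^{*}X=\left(A-A_{k}\right)^{*}\left(A-A_{k}\right)$, and therefore $\left\Vert B-B_{k}\right\Vert _{2}=\left\Vert \left(A-A_{k}\right)^{*}\left(A-A_{k}\right)\right\Vert _{2}=\left\Vert A-A_{k}\right\Vert _{2}^{2}$, using that $\left\Vert M^{*}M\right\Vert _{2}=\left\Vert M\right\Vert _{2}^{2}$ for any matrix $M$.

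Combining this identity with the bound $\left\Vert B-B_{k}\right\Vert _{2}\leq\left(1+\sqrt{nk(n-k)}\right)\epsilon_{k}$ from Lemma~\ref{lem:id-symmetric} and taking square roots gives the stated inequality; substituting $\epsilon_{k}=\sqrt{4k(n-k)+1}\,\tau_{k+1}(A)^{2}$ from Lemma~\ref{lem:(Observation-3.3)} (using $\tau_{k+1}(B)=\tau_{k+1}(A^{*}A)=\tau_{k+1}(A)^{2}$) then recovers the constant $C(n,k)$ of Theorem~\ref{thm:gram-id}, completing its proof. The only step requiring care is the vanishing of the cross terms, which relies on $P$ being the genuine least-squares projector of (\ref{eq:id-p-tilde-matrix}) and not the column-oriented $\hat{P}$ of (\ref{eq:pmat-hat}) — exactly the distinction emphasized in the remark preceding this lemma; everything else is a one-line computation.
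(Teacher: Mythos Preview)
Your proof is correct and follows essentially the same route as the paper: both use Corollary~\ref{cor:P-projector} to show the residual $X=A-A_{k}$ is orthogonal to the range of $A_{k}$, conclude $B-B_{k}=(A-A_{k})^{*}(A-A_{k})$, and then invoke Lemma~\ref{lem:id-symmetric}. Your argument is in fact slightly more direct, since you verify the vanishing of the cross terms straight from the normal equations rather than introducing the auxiliary QR factorization $A_{k}=Q_{k}R_{k}$ and the projector $Q_{k}Q_{k}^{*}$ that the paper uses to reach the same conclusion.
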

\begin{proof}
That a symmetric decomposition exists follow from Lemma~\ref{lem:id-symmetric}.
Formally, let the QR factorization of $A_{k}$ be given by 
\begin{equation}
A_{k}=Q_{k}R_{k}\label{eq: ak =00003D qk rk}
\end{equation}
where $Q_{k}$ is a $m\times k$ matrix with orthogonal columns and
$R_{k}$ is a $k\times n$ upper rectangular matrix. By Corollary~\ref{cor:P-projector},
the orthogonal projector defined by $Q_{k}Q_{k}^{*}$ coincides with
the matrix $P$ in the sense that
\begin{eqnarray}
A_{k}\,\,=\,\,\acs P & = & Q_{k}Q_{k}^{*}A.\label{eq: ak =00003D qk qk* a}
\end{eqnarray}
Next, define the residual 
\begin{equation}
X=A-A_{k}\label{eq: x =00003D a - ak}
\end{equation}
and decompose it via $X=X_{1}+X_{2}$, where
\begin{equation}
\begin{array}{ccc}
X_{1}=Q_{k}Q_{k}^{*}X, & \,\,\,\,\,\,\,\,\,\,\,\, & X_{2}=(I-Q_{k}Q_{k}^{*})X.\end{array}\label{eq: x1 and x2}
\end{equation}
By Corollary~\ref{cor:P-projector}, we have that $X_{1}=0$, and
so $A=A_{k}+X_{2}$ with $X_{2}$ in the null space of $A_{k}^{*}$.
In addition, 
\begin{eqnarray}
B_{k} & = & A_{k}^{*}A_{k}\,\,=\,\,(Q_{k}Q_{k}^{*}A)^{*}(Q_{k}Q_{k}^{*}A)\,\,=\,\, A^{*}Q_{k}Q_{k}^{*}A.\label{eq: bk defined via orth proj}
\end{eqnarray}
Thus
\begin{eqnarray}
B-B_{k} & = & A^{*}A-A^{*}Q_{k}Q_{k}^{*}A\,\,=\,\,(A-Q_{k}Q_{k}^{*}A)^{*}(A-Q_{k}Q_{k}^{*}A)\nonumber \\
 & = & (A-A_{k})^{*}(A-A_{k}).\label{eq: b - bk =00003D (a - ak)*(a - ak)}
\end{eqnarray}
Hence, $\left\Vert A-A_{k}\right\Vert _{2}=\left\Vert B-B_{k}\right\Vert _{2}^{1/2}$,
with the upper bound $\left((1+\sqrt{nk(n-k)})\epsilon_{k}\right)^{1/2}$
following directly from Lemma~\ref{lem:id-symmetric}.\end{proof}

\end{document}